
\documentclass[11pt,a4paper]{article}
\usepackage[english]{babel}
\usepackage{amsthm}
\usepackage{amsmath}
\usepackage{amsfonts}
\usepackage{amssymb}
\usepackage{makeidx}
\usepackage{graphicx}
\usepackage[normalem]{ulem}
\usepackage{color}
\usepackage[colorlinks=true,linkcolor=blue,citecolor=blue,urlcolor=black]{hyperref}
\usepackage[left=3.2cm,right=3.4cm,top=3cm,bottom=2.5cm]{geometry}
\usepackage{fancyhdr}
\pagestyle{fancy}
\providecommand{\ams}[1]{\textit{AMS 2010 Subject Classification.} #1}
\providecommand{\keywords}[1]{\textit{Keywords.} #1}

\def\wC{\widehat{C}}

\def\grad{\nabla}
\def\Hess{\nabla^2}
\def\P{\mathbb{P}}
\def\E{\mathbb{E}}
\newcommand{\smc}{\scshape}
\def\real{{\mathbb R}}

\def\hM{{\widehat M}}
\def\hg{{\widehat g}}
\def\Var{\text{Var}}
\def\bi{\mathbf i}
\def\Xi{\X_\bi}

\def\tM{\widetilde{M}}
\def\wM{\widehat{M}}

\def\wN{\widehat{N}}

\def\smallhalf{\mbox{ $\frac{1}{2}$}}

\def\C{{\mathbb C}}
\def\wC{\widehat{\mathbb C}}
\def\tzeta{{\widetilde{\zeta}}}
\def\Btwo{{B^{(2)}}}

\newcommand{\EXCLUDE}[1]{}

\newcommand{\remove}[1]{}
\def\P{\mathbb{P}}
\def\E{\mathbb{E}}
\newcommand{\beq}{\begin{eqnarray}}
\newcommand{\eeq}{\end{eqnarray}}
\newcommand{\beqq}{\begin{eqnarray*}}
	\newcommand{\eeqq}{\end{eqnarray*}}
\def\:{:\,}

\newtheorem{theorem}{Theorem}[section]

\newtheorem{lemma}[theorem]{Lemma}

\newtheorem{assumption}[theorem]{\bf Assumption}

\def\C{{\cal C}}

\newcommand{\ep}{\varepsilon}


\def\definedas{\stackrel{\Delta}{=}}

\def\definedas{\stackrel{\Delta}{=}}

\def\convas{\stackrel{a.s. }{\to}}

\def\1{\mathbf{1}}

\def\X{\bold{X}}

\def\smallhalf{\mbox{$\frac{1}{2}$}}

\fancyhead[RO,LE]{{\footnotesize Adler, Krishnan, Taylor, Weinberger}}
\fancyhead[LO,RE]{{\footnotesize Reach of random manifolds}}
\fancyhead[C]{\thepage}
\fancyfoot{}

\begin{document}

	\title{Convergence of the reach for a sequence of   Gaussian-embedded manifolds	}


	
	\author{	
	Robert J.\ Adler\footnote{Electrical Engineering, Technion, Haifa, Israel. Research supported in part by URSAT, ERC Advanced Grant 320422 and SATA II, AFOSR,  FA9550-15-1-0032.},
	Sunder Ram  Krishnan\footnote{Electrical Engineering, Technion, Haifa, Israel.   Research supported in part by URSAT, ERC Advanced Grant 320422 .},\\
	        Jonathan E.\ Taylor\footnote{Statistics,  Stanford University,  Stanford, CA.  Research supported in part by SATA,   AFOSR, FA9550-11-1-0216.},
	        and   Shmuel Weinberger\footnote{Mathematics, University of Chicago, Chicago, IL.     Research supported in part by SATA,   AFOSR, FA9550-11-1-0216.}.}
	        
	\maketitle
	
	\begin{abstract}
		{\footnotesize
			Motivated by questions of manifold learning, we study a sequence of random manifolds, generated by embedding a fixed, compact manifold $M$ into Euclidean spheres of increasing dimension via a sequence of Gaussian mappings. One of the fundamental smoothness parameters of manifold learning theorems is  the reach, or critical radius, of $M$. Roughly speaking, the reach is a measure  of a manifold's departure from convexity,  which  incorporates both local  curvature and global topology.

This paper develops  limit theory for the reach of a family of random, Gaussian-embedded, manifolds, establishing both almost sure convergence for the global reach, and a  fluctuation theory for both it and its local version. The global reach converges to a constant well known  both in the reproducing kernel Hilbert space theory of Gaussian processes, as well as in their extremal theory.
}
		
	\end{abstract}
	
	\ams{ Primary: 60G15, 57N35;  Secondary: 60D05, 60G60.}
	
	\keywords{Gaussian process, manifold, random embedding, critical radius, reach, curvature, asymptotics, fluctuation theory.}

\section{Introduction}

This paper has two themes to it. One lies in the general area of the geometry of Gaussian processes, or random fields, over general spaces, and is about random embeddings. The second is more topological, and can be seen as putting probability measures on spaces of manifolds, and then studying the behavior of their reach. Both are motivated from recent results in manifold learning.

\subsection{Gaussian embeddings}
 We start with parameter spaces which will always be  $m$-dimensional, compact, smooth manifolds, without boundary, and which will be denoted by $M$.
On $M$, we define a centered, unit variance, smooth, Gaussian process $f\:M\to\real$,  the distribution of which is characterized by its covariance function
  $\C\:M\times M\to\real$. Taking  $k\geq 1$, we also define a $\real^k$-valued process
\beq
\label{embedding1}
f^k(x)\ = \ \left(f_1(x),f_2(x),\cdots,f_k(x)\right),
\eeq
made up of the first $k$ processes in  an infinite sequence of i.i.d.\ copies of $f$.
It is not hard to check  (under the mild side requirements that will be made formal later) that \eqref{embedding1} defines, with probability one,  an   embedding (i.e.\ an injective homeomorphism) $f^k(M)$ of $M$ into $\real^k$  for all 
$k\geq 2m+1$, akin to what one would expect from the Whitney embedding theorem. We call this a Gaussian embedding of $M$.

It is easy to check that the diameter of $f^k(M)$ is $O(\sqrt{k})$. Thus, to keep the embedding under control, we need to normalise it either by $\sqrt{k}$, or self-normalise by defining 
\beq
h^k(x)\ \definedas \ \frac{f^k(x)}{\|f^k(x)\|},\qquad x\in M,
\label{embedding}
\eeq
where $\|\cdot\|$ is the standard Euclidean norm, and consider the embedding $h^k(M)$, which now lies in the unit sphere $S^{k-1}$ in $\real^k$. For reasons of notational convenience, this is the embedding that we shall consider in the current paper, although we could just as well have adopted a $\sqrt{k}$ normalisation without any qualitative changes in our results, although some of the details would be different. We call $h^k(M)$ a {\it self-normalised, Gaussian, embedding} of $M$.

However, although all of $M$, $f$ and the ambient spheres are smooth, it is not so clear  how smooth these embeddings are going to be as  
 $k\to\infty$. On the one hand, the self-normalisation in \eqref{embedding} ensures that $h^k(M)$ lies in a fixed radius sphere. 
On the other hand, high-dimensional spheres are strange objects, with surface areas tending to zero as the dimension grows. Thus,    given the increasing independence added into the mapping with each new $f$ component,  it is not at all a priori clear whether the embeddings eventually become rough, and perhaps fractal, or whether  there is some sort of strong law behavior that leads to deterministic behavior in the limit. If the latter case is correct (which it is) then an associated fluctuation theory is called for.

The main results of this paper resolve these issues, at least  in the framework of the  reach
 of the self-normalised Gaussian embeddings $h^k(M)$, as $k\to\infty$.
 
 \subsection{Reach}
The modern notion of  reach seems to have appeared first in the classic paper \cite{FedererCurvature}  of Federer, in which he introduced the notion of sets with positive reach and their associated curvatures and curvature measures. In doing so, Federer was able to include, in a single framework, Steiner's tube formula for convex sets and Weyl's tube formula for $C^2$ smooth submanifolds of $\real^n$.
The importance of this framework extended, however, far beyond tube formulae, as it became clear that much of the theory surrounding convex sets could be extended to sets that were, in some sense, locally convex, and that  the reach of a set was precisely the way to quantify this property.

To be just a little more precise -- a formal definition will be given below  in Section \ref{critraddef} -- 
we start with a smooth manifold $N$ embedded in an ambient manifold $\wN$. Then
the local reach at a point $x\in N$ is the furthest distance one can travel, along any vector based at $x$ but normal to $N$ in $\wN$, without meeting a similar vector originating at another point in $N$. The (global) reach of $N$ is then the infimum of all local reaches. As such it is related to local properties of $N$ through its second fundamental form, but also to global structure, since points on $N$ that are far apart in a geodesic sense might be quite close in the metric of the ambient space $\wN$. The reach of a manifold is also known as its `critical radius' for a good geometrical reason described below, and we shall use both terms interchangeably. (See the paragraph following \eqref{globc}.)

We shall give precise definitions in the following section,  noting for now   that beyond its importance in tube formulae and other classical areas of Differential Geometry and Topology, the notion of positive reach has recently begun to play an important role in the literature of Topological Data Analysis (TDA) in general, and manifold learning via algebraic techniques in particular. We shall discuss this briefly at the end of  Section \ref{sec:importance}.

\subsection{Main results and structure of the paper}
With the terminology we have so far alluded to (but in most cases have yet to define rigorously)  let  $\theta (N,x)$ denote the local reach of a manifold $N$ at the point $x\in N$, while
\beqq
\tau \ \equiv \tau(N) \ \definedas \ \inf_{x\in N} \theta(N,x),
\eeqq
denotes the global reach of $N$. We, however, are interested in the reach of $h^k(M)$, and the  main result of this paper is Theorem \ref{maintheorem}, which states that there is a deterministic function $\sigma^2_c(f,x)$, $x\in M$, such that, with probability one, and uniformly in $x\in M$, 
\beq
\label{intro:localconv}
\cot^2 \left(\theta\left(h^k(M),h^k(x)\right)\right) \ \to\     \sigma_c^2(f,x),
\eeq
as $k\to\infty$. An immediate consequence of this is the existence of a constant, denoted by $\sigma^2_c(f)$, such that the sequence of global reaches satisfies
\beq
\label{intro:globalconv}
\cot^2 \left(\tau\left(h^k(M)\right)\right) \ \convas\     \sigma^2_c(f) \ \definedas \ \sup_{x\in M} \sigma_c^2(f,x).
\eeq
While the notation regarding the various versions of $\sigma^2$ is a little heavy, it is time-honored, since the constant $\sigma^2_c(f)$ has appeared previously in the extremal theory of Gaussian processes. In fact, one of the most interesting aspects of the convergence in \eqref{intro:globalconv} is the, a priori surprising, fact that  $\sigma^2_c(f)$ is the limit.  This constant had arisen earlier in a completely different context in \cite{Adler,Takemura}. That context, described briefly in Section \ref{sigmac2}, related to rigorously proving the so called `Euler characteristic heuristic', which approximates a wide class of Gaussian extremal probabilities via the expected Euler characteristic of their excursion sets. The role of the constant there is in quantifying the super-exponentially small error rate involved in the approximation. We shall discuss the importance of this constant in more detail in Section \ref{sec:GPs}.

Given the convergence in \eqref{intro:localconv}, it is natural to ask if an associated fluctuation result also holds. Indeed, this is the case, and Theorem \ref{maintheorem} also gives us that 
\beq
\label{intro:weakconv}
\sqrt{k}\left(\cot^2 \left(\theta\left(h^k(M),h^k(\cdot)\right)\right) \ - \  \sigma_c^2(f,\cdot)\right)
\eeq
converges, in distribution, as $k\to \infty$, to a limit which can be bounded by the supremum of a certain  Gaussian process, the precise distribution of which is given much later in Theorem  \ref{thm:mainweak}.

The remainder of the paper is organised as follows: In the following section we have collected some general results about positive reach that were a large part of the  motivation for our study. The reader uninterested in motivation can skip all but the definition of reach in Section \ref{critraddef}. The reader interested in knowing  more about the history and applications
of positive reach is referred to the excellent survey by Thale \cite{Thale-reach}, or Chapter 7 of \cite{Chazalbook}, which discusses reach in the context of TDA.

Section \ref{sec:GPs} defines Gaussian processes on manifolds and associated notions such as the induced metric. It also introduces the constant $\sigma^2_c(f)$. Much of this section is a quick summary of the material in \cite{Adler} needed for this paper, and once this is done we have everything defined well enough to state the main result of the paper.

The real work starts in Section \ref{critradcomp}, in which we develop  specific  representations for the critical radius of a $S^{k-1}$ embedded manifold which  form the basis of all that follows. Some of the results here already exist in the literature, and the proofs of these are relegated to an appendix. Some are new and full proofs are given in situ. Section \ref{4lemmas}  lists four lemmas, from which, together with the representation  of  Section \ref{critradcomp}, the proof of the a.s.\ convergence in the main theorem follows easily.  Following a brief section  devoted to notation,  Sections \ref{sec:lemma1}--\ref{errproof}, which is where the hardest work lies,  then prove these lemmas, one at a time. In Section \ref{sec:fluctuation} we turn to the fluctuation result of \eqref{intro:weakconv}, both proving it and describing the 
limit process. Two technical appendices complete the paper.

\section{Critical radius and positive reach}
\label{sec:importance}

\subsection{The definition}
\label{critraddef}

Throughout the paper our underlying manifold $M$ will satisfy the following assumptions:

\begin{assumption}
\label{man:ass}
$M$ is an $m$-dimensional manifold, compact, boundaryless,  oriented, $C^3$,  and connected.
 \end{assumption}
Sometimes we shall assume that $M$ is associated with a 
Riemannian metric $g$, and sometimes that it is embedded in a smooth Riemannian manifold $(\widehat{M},\hg)$.  The main example that we shall need for this paper for an embedding space is the unit sphere $\hM=S^{k-1}$, but we shall also meet the simple Euclidean case $\hM=\real^k$ when discussing tube formulae below. In the first example, geodesics are along great circles, and the associated Riemannian distance is measured via angular distance. In the second, the geometry is the standard Euclidean one.

As an aside, we note that all our results could be extended to the case of manifolds with boundary, and even stratified manifolds satisfying the kind of side conditions endemic to \cite{Adler}. However, then we would also have to suffer through all the heavy notation endemic to \cite{Adler}, which seemed unnecessary, given that our primary motivation was to establish a general principle rather than the most general  result possible.

For the main result of the paper, all of the conditions in Assumption \ref{man:ass} are required. This is not true for some of the lemmas along the way, but for ease of exposition we shall generally adopt all the conditions throughout the paper.   For the fluctuation result, we shall even need that $M$ is $C^6$, and we will add that assumption when needed. Of course, if the majority of the authors were topologists rather than probabilists, we would  probably just have assumed that $M$ is `smooth' (i.e.\ $C^\infty$) and then not have been concerned with optimal levels of differentiability.

We need the standard exponential map (cf.\  \cite{Lee})  that maps tangent vectors to  points on the manifold. This, 
for $x\in M$ and $X\in T_x\wM$, the tangent space to $x$ in $\wM$, is given by the local diffeomorphism
\beqq
\exp^{\widehat{M}}_{x}(X) \ =\ \gamma_{x,\eta_X}(\|X\|),
\eeqq
where 
$\gamma_{x,\eta_X}$ is the unit speed geodesic in $\wM$  starting at $x$ in the direction $\eta_X\stackrel{\Delta}{=}X/\|X\| \in\ S(T_x\widehat{M})$,
 the (sphere of) unit tangent vectors at $x$. 
 The notion of  reach is closely related to the radius of the largest ball around the origins  in $T_x\wM$,  $x\in M$,  for which all  the exponential maps  are, in fact,  diffeomorphisms.

To give a more formal definition, let $d_M(x,y)$  ($d_{\widehat M}(x,y)$)  denote geodesic distance between points $x,y\in M$  
($\in \widehat M$), and for $x\in M$ and $A\subset M$ set
\beqq
d_M(x,A) \ \definedas \inf_{y\in A} d(x,y),
\eeqq
with a similar definition for $x\in \widehat M$ and $A\subset \widehat M$.

Then the local reach, or local critical radius, 
of $M$ in $\widehat{M}$ at $x$, in a  direction $\eta\in S(T_x\widehat{M})$,  is defined by 
\begin{equation}
\theta_{\ell}(x,\eta)\ \definedas \ \sup\left\{p\: d_{\widehat{M}}\left(\exp^{\widehat{M}}_x(p\,\eta),M\right)=p\right\}.
\label{locrad}
\end{equation}
Thus, if $p>\theta_{\ell}(x,\eta)$, there is a point $y\neq x$ in $M$ which is closer to $\exp^{\widehat{M}}_{x}(p\,\eta)$ than $x$ is. The local critical radius of $M$ in $\widehat{M}$ at the point $x$ is defined as
\begin{equation}
\theta(M,x)\ \equiv \  \theta (x)\stackrel{\Delta}{=}\inf_{\eta\in T_x^\perp M \cap S(T_x\widehat{M})}\theta_{\ell}(x,\eta),
\label{locx}
\end{equation}
where $T_x^\perp M$ is the normal space at $x$,  of $M$ in $\widehat{M}$.
Taking  an infimum over the entire manifold finally gives the global reach, or  critical radius, of $M$ in $\widehat{M}$:
\begin{equation}
\tau(M) \equiv \tau  \ \equiv \ \theta (M,\hM )\ \stackrel{\Delta}{=}\ \inf_{x\in M}\theta(x).
\label{globc}
\end{equation}

A more picturesque definition of reach, in the Euclidean setting  for which $\widehat M=\real^n$, which also explains the terminology `critical radius' is as follows: Imagine rolling a ball of radius $r$ and  dimension $n$ over the manifold $M$, but in such a way that the ball only touches $M$ at a single point. The largest choice of radius that allows this is the critical radius.

For some examples in which $\hM$ is a Euclidean space of codimension of least one with respect to $M$, note that if $M$ is a convex set, then its reach will be infinite. In fact,  infinite reach characterizes convex sets in this case. If $M$ is a sphere, then its reach is equal to its radius. If $M$ is the disjoint union of two spheres, the reach is the minimum of the two radii and half of the closest distance between the spheres.

If $\hM$ is itself a sphere, and $M$ a great circle, then the reach of $M$ (in angular coordinates) will be $\pi/2$. In general, the reach of a closed subset of a sphere will be no more than $\pi/2$.

This is all you need to know about reach to skip to Section \ref{sec:GPs} and read the rest of the paper.
 The rest of this section is motivational.

\subsection{Medial axis}
\label{subsec:medial}
An alternative way to think of reach is via the notions of the  medial axis of $M$ and its local feature size,  notions which have been developed in the Computational Geometry community. Given $M$ embedded in $\hM$, define the set  
\beqq
G=  \left\{y\in \hM\: \exists\ x_1\neq x_2\in M \text{ such that } d_\hM (y,M) = d_\hM (y,x_1) = d_\hM (y,x_2)\right\}.
\eeqq
The closure of $G$ is called the medial axis, and for any $x\in M$ the local feature size $s(x)$ is the distance of $x$ to the medial axis. It is easy to check that
\beqq
\theta(M,\hM)\ = \ \inf_{x\in M} s (x).
\eeqq

\subsection{On tube formulae}
\label{subsec:tube}
As mentioned earlier, the   birthplace of the notion of reach is 
Weyl's volume of tubes formula,  a classical result in Differential Geometry, and an extension of the much earlier  Steiner's tube formula for convex sets in $\real^n$. Interestingly, Weyl's original paper  \cite{Weyl} was motivated by 
a question raised by Hotelling \cite{HOT} related to the  derivation of  confidence regions around regression curves. Both of these papers still make for fascinating (but not easy) reading today, and both generated enormous literatures, one mathematical (e.g.\ \cite{Gray}) and one statistical (e.g.\ \cite{Johansen} and the literature referenced there). For its importance to Probability see, for example, \cite{Adler} and the references therein.

Restricting ourselves to the Euclidean setting for the moment, define the tube of radius $\rho >0$ around  $M$ in $\widehat M =\real^k$ to be 
\beqq
\text{Tube}(M,\rho) \ = \ \left\{x\in\real^k\: \inf_{y\in M} \|y-x\|\leq \rho\right\}.
\eeqq
Then Weyl's tube formula  states that, for $\rho<\theta (M,\real^k)$,
\beq
\label{eq:tube}
\text{Vol}
\left(\text{Tube}(M,\rho)\right)=\sum_{j=0}^m\mathcal{L}_j(M)\omega_{k-j}\rho^{k-j},
\eeq
where Vol is $k$-dimensional Lebesgue volume, $\omega_{n}$ denotes the volume of a  unit $n$-dimensional ball, and the 
$\mathcal L_j{(M)}$ are the Lipschitz-Killing curvatures of $M$. These are also known as quermassintegrales, Minkowski,
 Dehn and Steiner functionals, and intrinsic volumes, although in many
of these cases the indexing  and normalisations differ. It is worth noting, as Weyl established in what he considered the  part of \cite{Weyl}  that required more than ``what could have been accomplished by any student in a course of calculus'', that  these functionals are intrinsic. That is, they are independent of the embedding of $M$ into $\real^k$. (See for example, Lemma 10.5.1 in \cite{Adler}, where this  fact is given a probabilistic proof  in the notation we use here.)

It is hard   to overstate the importance of \eqref{eq:tube}, along with its variants for  more general ambient spaces. The fact that the formula ceases to hold for $\rho$ larger than the reach means that all the applications of tube formulae also fail at some point, and it is knowing where this point is that makes the reach such an important parameter of a manifold.

\subsection{Condition numbers, manifold learning and learning homology}

Standard manifold learning scenarios usually start with a `cloud' of points $\mathcal X=\{x_1,\cdots,x_n\}$ in some high dimensional space, which are believed to be sampled from an underlying manifold $M$ of much lower dimension $m$, with or without additional noise. (Additional noise will mean that the points need not lie on $M$ itself, but rather are sampled from  some region near $M$.) A classical problem is to construct a set which approximates  $M$ is a useful fashion. This is a well known problem with a vast literature, and `useful' here is usually taken to be mean physical closeness in some norm. 

More recently, a new literature has appeared, motivated by ideas from Algebraic Topology, 
in which the aim of physical closeness  is replaced with the aim of correctly recovering  the topology of $M$. Two of the earliest papers in this area are \cite{Niyogi1,Niyogi2} (but see also \cite{ChazalL}) and  it is these papers that were in fact the original motivators of the current one.

In \cite{Niyogi1} the setup is that of a random sample from $M$, and the recovery method -- or at least the theorems describing its properties -- relies on knowing the reach $\tau$ of $M$. In this case, choosing an
$\ep\in (0, \tau/2)$, the simple union of $\ep$-balls centered at the points of $\mathcal X$ is chosen as the estimator of $M$.
That is 
\beq
\label{union:eq}
M_{estimate} \ \definedas \bigcup_{x\in\mathcal X} B_\ep (x).
\eeq
The arguments in \cite{Niyogi1} follow the two stage structure of Smale described above. Firstly, it shows   that if one has a dense enough subset of points in $M$ then $M$ is a deformation retract of $M_{estimate}$, and so both sets have the same homology. For the second stage,  it shows that if a large enough sample is taken then one can bound, from below, the probability of the sample being dense enough. The final result is that, for all small $\delta$, if 
\beqq
n\ >\ \beta_1\left(\log(\beta_2)+\log\left(\frac{1}{\delta}\right)\right),
\eeqq
where
\beq
\label{betas:eq}\quad
\beta_1=\frac{4^m\, \text{vol}(M)}{\omega_m(\ep\cos \gamma_1)^m},\qquad
\beta_2=\frac{8^m\, \text{vol}(M)}{\omega_m(\ep \cos\gamma_2)^m },
\eeq
and  
$\gamma_1=\arcsin (\ep /8\tau)$ and 
$\gamma_2=\arcsin (\ep/16\tau)$, then 
the homology of $M_{estimate}$ equals the homology of $M$ with probability at least $1-\delta$. A corresponding  result for the case of sampling with noise is given in \cite{Niyogi2}.

We have brought the above equations to show, explicitly,   how the reach appears in the complexity of this estimation problem. The smaller the  reach, is, the smaller one is forced to take $\ep$, and the larger the sample size $n$ needs to be for a given estimation accuracy.

Of course, for a given problem, one does not know what $M$ is, and so, a fortiori, little is known about its  reach. Consequently, in the spirit of Smale's two step procedure, we need to enrich the second stage by also averaging over possible $M$. The current paper is a step in this direction, by formulating a class of random Gaussian manifolds and beginning a study of their reach.

Moreover, the main result of the paper has an immediate application in the manifold learning situation. Although Theorem \ref{maintheorem} relates only to a very specific random embedding of $M$ into a high dimensional sphere, a liberal interpretation of the theorem
 implies that the part of the  complexity of the estimation problem depending on reach is more or less independent of any embedding of $M$ into a higher dimensional space. The import of this is that there is no `curse of dimensionality', related to reach, that involves the dimension of the ambient space.

Of course, we can only make these claims for the Gaussian-embedding that we study, but the fact that they are proven in the Gaussian case will alleviate concerns among practitioners, in general, that ambient dimension has an effect on reach. This was not known until now, even for a special case.

A second practical implication of this paper is the introduction, albeit implicit, of a new class of smooth random manifolds that are both reasonable and mathematically tractable. Recalling the two stage paradigm of Smale above, it would be interesting, and probably useful, to introduce into the TDA setting the notion of Bayesian optimization. In terms of the above homology learning example, by this we mean minimizing not the probability of correctly identifying the homology for a fixed (but unknown) $M$, but rather minimizing the expectation of some cost function of this probability, averaged over a (random) family of possible $M$. 
The calculations of the current paper, along with those of \cite{Krishnan} which address issues of the asymptotic isometry of Gaussian-embedded Riemannian manifolds, show that the model introduced here allows for tractable mathematical manipulation.


\section{Gaussian processes on manifolds, and the main theorem}
\label{sec:GPs}

As mentioned earlier, our basic reference for Gaussian processes is \cite{Adler}. Here we shall only give the very minimum in definitions and notation needed for this paper.

\subsection{Gaussian processes on Riemannian manifolds}
We start, as usual, with a $C^3$ compact manifold $M$, with or without an associated Riemannian metric $g$. (For the novice, Section \ref{sec:notation} explains these terms and some of the following notation.)

A real valued Gaussian process, or random field, $f\:M\to\real$, with zero mean (assumed henceforth) is then determined by its covariance function $\C \:M\times M\to\real$ given by
\beqq
\C(x,y)\ = \ \E\{f(x)f(y)\}.
\eeqq
If $\C$ is smooth enough, the process also induces a Riemannian metric on the tangent bundle $T(M)$ of $M$ defined by 
 \begin{equation}
g_x(X,Y)\ \definedas\ \E \{(Xf)(x)\times (Yf)(x)\}\  =\ Y_y X_x \C(x,y)\big|_{y=x},
\label{indmet}
\end{equation}
where $X,Y$ are vector fields with values $X_x,Y_x$ in the tangent space $T_x M$. We shall assume throughout that $\C$ is positive definite on $M\times M$, from which it follows that $g$ is a well defined  Riemannian metric, which we call the {\it metric induced by $f$}. 

From now on, we shall make one of two -- essentially complementary -- assumptions:

\begin{assumption}
\label{indmet:assump}
If, in the above setting, we are given a manifold $M$ as in Assumption \ref{man:ass} and a Gaussian process $f\:M\to\real$, but no metric on $M$, we shall assume that $M$ is endowed with the metric induced by $f$.

If, on the other hand, we start with a Riemannian manifold $(M,g)$, then we shall choose a Gaussian process in such a way that the metric  induced by \eqref{indmet} is precisely $g$. 
\end{assumption}
The fact that given a metric $g$ there exists a Gaussian process inducing this metric is a  consequence of the Nash embedding theorem (cf.\ proof of Theorem 12.6.1 in \cite{Adler}).  

The only additional assumptions that we require  relate to smoothness and non-degeneracy for $f$, but for this we need some notation.  Thus we write, from now on, $\nabla$ for the Levi-Civita connection of  $(M,g)$, 
and $\nabla^2$ for the corresponding covariant Hessian. 
 Fix an orthonormal (with respect to $g$) frame field 
$E=(E_1,\dots,E_m)$ in $T(M)$. The specific choice of $E$  is not important.
\begin{assumption}
\label{f:assump}
We assume that the zero mean Gaussian process $f\:M\to\real$ has, with probability one, continuous first, second, and third order derivatives over $M$, and, for each $x\in M$, the joint distributions of the $(1+m+m(m+1)/2)$-dimensional random vector
\beqq
\left( f(x),\, ((\nabla f)(E_i))(x), \, ((\nabla^2 f)(E_i,E_j))(x), \ 1\leq i,j \leq m \right)
\eeqq
are nondegenerate.

We shall also assume that $\E\{f^2(x)\}$, the variance of $f$,   is constant, and for convenience, we take the constant to be one. No other homogeneity assumptions are required.
\end{assumption}

Regarding Assumption \ref{f:assump}, we note that the requirement that $f\in C^3(M)$ is probably 
not necessary. It   arises as a side issue in a tightness argument in Section \ref{tight:sec}, which requires a uniform bound on increments of fourth order derivatives of $\C$. A (much) more complicated argument would probably  require only  that 
$f\in C^{2+\epsilon}$ for some $\epsilon>0$,
but rather than lose sight of the forest for the trees we are happy to live with the extra smoothness. In fact, in order to prove the fluctuation result \eqref{intro:weakconv}, we shall even have to assume that $f\in C^6(M)$. We shall explain how the need for these high levels of smoothness arise in a moment, when we have the requisite notation.

\subsection{The parameter $\sigma^2_c(f)$}
\label{sigmac2}
Given the above setting, we now define a new Gaussian 
process on 
\beq
\label{tM:def}
\tM\ \stackrel{\Delta}{=} \ (M\times M)\setminus \text{diag}(M\times M)
\eeq
by setting
\begin{equation}
f^x(y)\ =\   
 \frac{f(y)-\E \left\{f(y)\,\big| \, f(x),\nabla f(x)\right\}}{1-\C(x,y)}.
\label{newpr:eq}
\end{equation}
The fact that this process is well defined is not obvious, since as $y\to x$ in \eqref{newpr:eq} both numerator and denominator approach zero. Nevertheless, as we shall show in Section \ref{limit:subsec},  if we have enough smoothness for $f$, then the limit behaves well. 
 For example, just to be certain that $\lim_{y\to x} f^x(y)$ is well defined requires that $f\in C^2$. 

(In fact, ratios of the $0/0$ nature appear throughout the proofs, with denominators such as $1-\C (x,y)$ (as above),
 $1-\C^2 (x,y)$, and even  $(1-\C^2 (x,y))^2$, all of which are problematic as $y\to x$. For the  
the a.s.\ convergence of \eqref{intro:globalconv}, this leads to the requirement that$f\in\C^3(M)$. For the fluctuation result \eqref{intro:weakconv} we  will even need to assume that $f\in C^6(M)$. While these conditions seem, at first, rather severe, they seem to be necessary and not just a consequence of our method of proof.)

In any case, $f\in C^3(M)$ is more than enough to ensure that it makes sense to define the  function $\sigma_c(f,\cdot)$, and constant 
the $\sigma_c(f)$, as follows:
\beq
\label{sigmax:def}
\sigma^2_c(f,x) &\definedas& \sup_{y\in M\setminus x}\text{Var}\left(f^x(y)\right),\\
\label{sigmac:def}
 \sigma^2_c(f) &\definedas& \sup_{x\in M}\sigma^2_c(f,x).
\eeq

We now have everything we need to state the main result of the paper, but, first, we explain why the above two definitions are already `well known'.

 Associated with the Gaussian process $f$ are a reproducing kernel Hilbert space, $H$, and an $L_2$ space, $\cal H$, which is the completion of the span of $f$ over $M$. Writing $S(H)$ and $S(\cal H)$ for the unit spheres of these spaces, there is an isometry, $\Psi$ between $M$, when given the  metric $g$ induced by $f$, and $S(\cal H)$, determined by $\Psi (x)=f(x)$, for all $x\in M$. There are also  isometries between $S(\cal H)$ and $S(H)$, and so between $M$ and $S(H)$, the details of which can be found, for example, in Chapter 3 of \cite{Adler}, but which date back to the earliest history of Gaussian processes.
 
 It turns out that  $\sigma^2_c(f,x)$ is precisely the local reach of $\Psi(M)$ at the point $f(x)$, when  $S(\cal H)$ is considered as a submanifold of $\cal H$.   It follows immediately that $\sigma^2_c(f)$ is the corresponding global reach. Similar statements can be made about the isometric embedding of $M$ into $S(H)$, but would take longer to explain. The bottom line, however, is that both 
 $\sigma^2_c(f,\cdot)$ and $\sigma^2_c(f)$  are basic quantities inherently connected to $(M,g)$ when it is viewed via isometric  embeddings into larger spaces, and that there is a lot of  Hilbert sphere geometry lying behind the asymptotics of this paper.
 
 These observations are relatively recent. In our current notation, they can be found in Section 14.4.3 of \cite{Adler}, but see also  \cite{Takemura-equivalence} and the references therein. 
 
 The reason that  $\sigma_c(f,\cdot)$ and $\sigma_c(f)$  have been of more recent interest is that they arise  in the rigorous justification of the so-called `Euler characteristic heuristic' for approximating the distribution of the supremum of smooth Gaussian processes. In this setting, let $\chi\left(A_u(f,M)\right)$ denote the Euler characteristic of the excursion set $A_u$ of the Gaussian field $f$, defined by
$$
A_u\ \equiv \ A_u(f,M)\ =  \ \{x\in M: f(x)\geq u\}.
$$
 It has been `well known' for some decades that, at least for high levels $u \in\real$, the mean Euler characteristic provides a good estimate of the  exceedance probability, 
$\P\left\{\sup_{x\in M}f(x)\geq u\right\}$. That is, for large $u$, the difference
\beqq
\text{Diff}_{f,M}(u)\ \definedas \ \E \left\{\chi\left(A_u(f,M)\right)\right\}-\mathbb{P}\Big\{\sup_{x\in M}f(x)\geq u\Big\}
\eeqq
is small. 

Relatively recently (cf.\  \cite{Adler,Takemura-equivalence,Takemura}) this statement has been made precise. (These sources actually treat the more general setting of stratified manifolds, which requires an  additional condition of local convexity for $M$, as well as some minor side conditions on both $M$ and $f$. The definition of $\sigma^2_c(f)$ is also correspondingly changed.  See, for example, \cite{Takemura-zero} for a discussion of why local convexity is required. In fact, what is required is close to positive reach, and the reason that \eqref{diff:eq} fails for zero reach is much the same reason that tube formulae fail. But that is another story.)
In our setting, it is now known that
\beq
\label{diff:eq}
\liminf_{u\rightarrow\infty}-u^{-2}\log\left|\text{Diff}_{f,M}(u)\right|\ \geq \ \smallhalf
\left(1+\frac{1}{\sigma^2_c(f)}\right).
\eeq

\subsection{Main result}
With the  introduction,  motivation and almost all of the notation behind us, we are almost ready to state the main result of the paper. However, two more items of notation are required. The first gives the  local radius of $h^k(M)$, as  a submanifold of $S^{k-1}$, at the image, under $h^k$, of the point $x\in M$. This is given, for $x\in M$,  by 
\beq
\label{thetak:def}
\theta_k(x) \ \definedas \ \inf_{\eta\in T_{h^k(x)}^\perp(h^k(M))\cap S(T_{h^k(x)}S^{k-1})} \theta_\ell(h^k(x),\eta),
\eeq
where $\eta$ is a unit vector in the tangent space $T_{h^k(x)}S^{k-1}$ pointing in a normal direction to $h^k(M)$ at $h^k(x)\in h^k(M)$. The second gives the global reach, as 
\beq
\label{thetak:defn}
\theta_k \ \definedas \inf_{x\in M} \theta_k(x).
\eeq

\begin{theorem}
Let $M$ be a manifold satisfying Assumption \ref{man:ass}, and let  $f\:M\to\real$ be a Gaussian process satisfying {Assumptions \ref{indmet:assump} and \ref{f:assump}}.
 Assume that $\sigma^2_c(f)$, as defined by \eqref{sigmac:def}, is finite. Consider the embedding (\ref{embedding}) of $M$ into the unit sphere in $\mathbb{R}^k$, and let $\theta_k$ be the global critical radius of the random manifold $h^k(M)$. Then, with probability one,
 \beq
 \label{mainconvergence:eq}
\cot^2\theta_k\  \to \ \sigma_c^2(f), \quad \text{as } k\to\infty.
\eeq
If, in addition,  $M$ is $C^6$, and the sample paths of $f$ are a.s.\ $C^6$ on $M$, then there exists a sequence $\bar\gamma_k$ of random processes from $M\to\real$, such that, for all $x\in M$,  
\beq
\label{bound-diff}
\sqrt{k}\left|\cot^2\theta_k(x)-\sigma^2_c(f,x)\right| \ \leq \ \left|\bar\gamma_k(x)\right|,
\eeq
and a limit process $\bar\gamma\:M\to\real$, such that,
\beq
\label{flucform:sec3}
\bar\gamma_k(\cdot)  \ \Rightarrow\  \ \bar\gamma (\cdot),
\eeq
where the convergence here is weak convergence, in the Banach  space of  continuous functions on $M$ with supremum norm, and 
\beqq
\bar\gamma (x) \ = \ \sup_{y\in M\setminus x}\gamma(x,y), 
\eeqq
where $\gamma$ is the Gaussian process over $\tM$ defined by
 \eqref{gammaeq}.
\label{maintheorem}
\end{theorem}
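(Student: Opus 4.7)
My plan is to prove all three statements of the theorem from a single explicit representation of $\cot^2\theta_k$ combined with a uniform law of large numbers on $\tM$ and, for the fluctuation statement, a functional CLT.

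First I would invoke the representation developed in Section \ref{critradcomp}, which expresses the critical radius of a submanifold of $S^{k-1}$ at a point via angular quantities among pairs of points and normal directions. Specialised to $h^k(M)$, this should give a formula of the shape
$$\cot^2\theta_k(x) \ = \ \sup_{y\in M\setminus\{x\}} F_k(x,y),$$
where $F_k(x,y)$ is a rational function of inner products of $f^k(x),f^k(y)$ and their first derivatives, with a denominator that degenerates like $1-\widehat\C_k(x,y)^2$ along the diagonal (writing $\widehat\C_k = k^{-1}\langle f^k(\cdot),f^k(\cdot)\rangle$). The strong law of large numbers, applied componentwise, gives, for each fixed $(x,y)\in\tM$,
$$\widehat\C_k(x,y) \ \to\ \C(x,y), \qquad F_k(x,y)\ \to\ F(x,y)\ :=\ \mathrm{Var}(f^x(y)) \quad \text{a.s.,}$$
with $F$ matching the quantity inside the supremum in \eqref{sigmax:def}.

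Second, \eqref{mainconvergence:eq} reduces, after taking two successive suprema, to an almost sure uniform convergence $F_k\to F$ on $\tM$, together with the continuous extension of $F$ across the diagonal. This is exactly what the four lemmas announced in Section \ref{4lemmas} should deliver: one lemma provides a uniform SLLN on compact subsets away from the diagonal; two more carry out a careful Taylor expansion of the numerator and denominator of $F_k$ and of $F$ in a $\delta$-neighborhood of the diagonal, producing $\delta$-uniform bounds (this is where $f\in C^3$ is spent, since it is the third-order term that cancels the singularity); and the last lemma sandwiches these estimates to upgrade pointwise to uniform limits. Granted this, the elementary inequality $|\sup F_k-\sup F|\le\sup|F_k-F|$, applied twice (in $y$, then in $x$), yields \eqref{mainconvergence:eq}.

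Third, for the fluctuation result the same elementary inequality gives
$$\sqrt{k}\bigl|\cot^2\theta_k(x)-\sigma_c^2(f,x)\bigr| \ \le\ \sup_{y\in M\setminus\{x\}}\sqrt{k}\bigl|F_k(x,y)-F(x,y)\bigr|\ =:\ |\bar\gamma_k(x)|,$$
which is \eqref{bound-diff}. Since $F_k$ is a smooth function of $k$-fold empirical averages of inner products drawn from i.i.d.\ copies of $f$, the functional delta method turns $\sqrt{k}(F_k-F)$ into a Gaussian fluctuation on $\tM$: finite-dimensional convergence follows from the multivariate CLT applied to the joint vector of inner products, and tightness in $C(\tM,\|\cdot\|_\infty)$ requires controlling fourth-moment increments of derivatives of $\C$ of sufficiently high order -- this is the precise place where the strengthened assumption $f\in C^6$ is consumed. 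Having $\sqrt{k}(F_k-F)\Rightarrow\gamma$ with $\gamma$ the process of \eqref{gammaeq}, the continuous mapping theorem applied to $\phi\mapsto\sup_{y\neq\cdot}\phi(\cdot,y)$ yields \eqref{flucform:sec3}.

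The main obstacle throughout is the diagonal $\{y=x\}$: the ratios defining $F_k$ and $F$ are of indeterminate $0/0$ type there, and one must carry out quantitative Taylor expansions in $g$-normal coordinates with remainders controlled uniformly in $k$, so that the joint limits $k\to\infty$ and $y\to x$ can be interchanged. Balancing these two limits is precisely what forces the escalation of smoothness from $C^3$ (enough for a.s.\ convergence) to $C^6$ (enough for tightness of the Gaussian fluctuation).
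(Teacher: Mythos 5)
Your skeleton does coincide with the paper's: the same geometric representation of $\cot^2\theta_k(x)$ as a supremum over $y$ of a ratio process built from $f^k$, $\wC_k$ and the conditional process $f^x$ (Lemmas \ref{crm} and \ref{lma:implemma}), an a.s.\ uniform law of large numbers over $\tM$ for \eqref{mainconvergence:eq}, and then the elementary bound $|\sup_y F_k-\sup_y F|\le\sup_y|F_k-F|$ plus a functional CLT and a continuous mapping step for \eqref{bound-diff}--\eqref{flucform:sec3}. The genuine gap is in the mechanism you offer for uniformity across the diagonal, which is the crux of the entire argument. Proving tightness of $\sqrt{k}(F_k-F)$ directly in $C_b(\tM)$ with the supremum norm, as you propose, is not available from increment-moment bounds: $\tM$ is the punctured product, the denominators $(1-\wC_k(x,y))^2$ degenerate at fourth order as $y\to x$, and the linearization underlying the functional delta method is therefore not uniformly valid near the diagonal, so ``fourth-moment increments of derivatives of $\C$'' do not by themselves produce a modulus of continuity for the singular ratio. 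Likewise, for the a.s.\ part, the ``Taylor expansion with remainders controlled uniformly in $k$'' is exactly the thing that needs a device: uniform-in-$k$ control of second and third derivatives of the empirical quantities is itself a functional limit theorem in a derivative norm, and your plan never says where it comes from.

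The paper's key idea, absent from your proposal, is to prove the limit theorems for the \emph{non-singular} building blocks --- $\sqrt{k}(\wC_k-\C)$ for Lemma \ref{secondterm}, and the $(1-\C)^2$-multiplied version of $\sqrt{k}\bigl(k^{-1}\|f^{x,k}(y)\|^2-\mathrm{Var}(f^x(y))\bigr)$ for the fluctuation step --- as weak convergence in the Banach spaces $B^{(2)}$ and $B^{(4)}$ of twice, respectively four times, continuously differentiable functions on the \emph{compact} space $M\times M$ (Sections \ref{tight:sec} and \ref{mainweak:fidi}), and then to show that the division maps $\phi\mapsto\phi/(1-\C^2)$ and $\phi\mapsto\phi/(1-\C)^2$ are a.s.\ continuous on the support of the Gaussian limits, by repeated L'H\^opital arguments exploiting that values and low-order derivatives vanish on the diagonal. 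This is also where the smoothness is really spent: $f\in C^3$ yields $\C\in C^6$ for the $B^{(2)}$-tightness, and $f\in C^6$ is needed because $f^x$ loses a derivative relative to $f$ and the $B^{(4)}$-tightness plus a Taylor step consume the rest --- not merely ``high-order increments of $\C$.'' Two smaller inaccuracies: the four lemmas of Section \ref{4lemmas} are uniform LLNs for the three factors in \eqref{subeq} together with the vanishing of the error term \eqref{errt}, not an away-from-diagonal/near-diagonal sandwich; and the singularity of $1-\C$ is cancelled at orders zero and one (since $\C(x,x)=1$ and $\nabla\C$ vanishes on the diagonal), with third derivatives entering only through remainder control, so the statement that ``the third-order term cancels the singularity'' misplaces where $C^3$ is used.
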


We defer all further discussion of the fluctuation result of  \eqref{bound-diff} and \eqref{flucform:sec3}   until Section 
\ref{sec:fluctuation}, where it will be restated as  Theorem \ref{fluctheorem}, and the (rather involved) definition of the process $\gamma$ will appear. Until then we shall concentrate on the a.s.\ convergence of \eqref{mainconvergence:eq}.

As an aside, note that a variation of some of the easier arguments in the following sections show that  the sequence of mappings $h^k$ tends, with probability one, to an isometry, in the sense that the associated  pullbacks  to $M$ of the usual metric on $S^{k-1}$ tend to the  induced metric \eqref{indmet} on $M$. We provide a rigorous treatment of this result in \cite{Krishnan}, albeit with the self-normalisation of \eqref{embedding}   replaced by a $\sqrt{k}$ normalisation. We also prove there that this gives rise to the a.s.\ convergence of a class of intrinsic functionals  of $h^k(M)$ to the corresponding functionals  on $(M,g)$.   We refer you  to \cite{Krishnan} for  details.


\section{Computation of the Critical Radius}
\label{critradcomp}

This section contains two purely geometric lemmas from which follow the  probabilistic computations that make up most of the paper. The first gives a characterisation of the reach of general submanifolds of spheres, and the second  does the same for the specific submanifolds $h^k(M)$ in terms of the functions $f^k$. 
To start, recall that geodesic distance on the sphere is measured in terms of angles, $r\in [0,\pi)$. 
Let $M$ be a submanifold of $S^{k-1}$,  and  $\eta_x$  a unit normal vector at $x\in M$. 

 We can now state the following characterisation, which implicitly assumes, as we shall from now on, that $M$ has dimension at least one. As stated it is identical to Lemma 2.1  of \cite{Takemura-equivalence}, restricted to our setting. (\cite{Takemura-equivalence} treats the more general setting of stratified manifolds.). Furthermore, as pointed out there, the proof is essentially the same as the proof given in \cite{Johansen} for the one-dimensional case. Nevertheless, because of its importance to this paper, and (only) for the sake of completeness, we give the proof in Appendix 1.

\begin{lemma}
Let  $M$ be a submanifold of $S^{k-1}$, satisfying the conditions of Assumption \ref{man:ass}. Let $T^{\perp}_x M \subset T_x S^{k-1}$ be the normal space of $M$ at $x$ as it sits in $S^{k-1}$,  viz.\ the orthogonal complement of $\text{span}(T_x M\oplus \{x\})\subset T_x\mathbb{R}^k$ in $T_x\mathbb{R}^k$. Then the critical radius, $\theta(x)$, at $x$ is given by
\begin{equation*}
\cot^2(\theta(x))\ =\ \sup_{y{\in M\setminus \{x\}}}\frac{\|P_{T^{\perp}_x M}y\|^2}{(1-\langle x,y\rangle)^2},
\end{equation*}
where $P_{T_x^{\perp}M}$ is orthogonal projection onto $T_x^{\perp}M$.
\label{crm}
\end{lemma}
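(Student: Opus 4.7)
The plan is to translate the definition of local reach into an explicit trigonometric equation using the standard parametrisation of great-circle geodesics on $S^{k-1}$, and then to optimise out the normal direction $\eta$. For any unit vector $\eta \in T_x^\perp M \cap S(T_x S^{k-1})$ we have $\exp_x^{S^{k-1}}(p\eta) = \cos(p)\, x + \sin(p)\, \eta$, and since $\eta \perp x$, its spherical distance from $x$ equals $p$ itself. For a competitor $y \in M\setminus\{x\}$, the spherical distance from $\exp_x(p\eta)$ to $y$ is $\arccos\!\left(\cos(p)\langle x,y\rangle + \sin(p)\langle \eta,y\rangle\right)$.

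The first step is to characterise the local reach in direction $\eta$. Because $\arccos$ is decreasing, the inequality $d_{S^{k-1}}(\exp_x(p\eta), y) \leq p$ is equivalent to $\sin(p)\langle \eta,y\rangle \geq \cos(p)(1-\langle x,y\rangle)$. Dividing by $\sin p > 0$ and using that $1-\langle x,y\rangle > 0$ whenever $y\ne x$, this becomes
\begin{equation*}
\cot p \ \leq \ \frac{\langle \eta, y\rangle}{1 - \langle x, y\rangle}.
\end{equation*}
Since $\cot$ is strictly decreasing on $(0,\pi)$, the smallest $p>0$ at which some $y\ne x$ reaches $\exp_x(p\eta)$ within distance $p$ is exactly the one satisfying $\cot p \ =\ \sup_{y\in M\setminus\{x\}}\langle \eta,y\rangle/(1-\langle x,y\rangle)$. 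By the definition \eqref{locrad}, this $p$ is precisely $\theta_\ell(x,\eta)$, so
\begin{equation*}
\cot \theta_\ell(x,\eta) \ = \ \sup_{y\in M\setminus\{x\}}\frac{\langle \eta,y\rangle}{1-\langle x,y\rangle}.
\end{equation*}

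The second step is to take the infimum in \eqref{locx} over normal directions, which, by monotonicity of $\cot$, corresponds to a supremum on the right. Swapping the order of the two suprema gives
\begin{equation*}
\cot \theta(x) \ = \ \sup_{y\in M\setminus\{x\}} \frac{1}{1 - \langle x,y\rangle}\, \sup_{\eta \in T_x^\perp M,\ \|\eta\|=1}\langle \eta, y\rangle.
\end{equation*}
The inner supremum is a textbook fact: maximising $\langle \eta, y\rangle$ over unit vectors $\eta$ in the linear subspace $T_x^\perp M$ produces the norm of the orthogonal projection, namely $\|P_{T_x^\perp M}\, y\|$, attained at $\eta = P_{T_x^\perp M}y/\|P_{T_x^\perp M}y\|$ when this projection is nonzero. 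Squaring yields the claimed identity.

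Two small points require a remark rather than real work. First, I should check that the supremum is non-negative, so that $\theta(x)\in(0,\pi/2]$ and the cotangent is well defined; this follows because $\dim M\geq 1$ allows $y$ to approach $x$ along $M$, and a second-order Taylor expansion of $y\mapsto P_{T_x^\perp M}y/(1-\langle x,y\rangle)^{1/2}$ shows the ratio stays bounded away from $-\infty$, while any single point $y$ whose projection has a positive-direction choice of $\eta$ already delivers a positive value. Second, the supremum $\sup_\eta\langle\eta,y\rangle$ is over the unit sphere of $T_x^\perp M$, not of $T_xS^{k-1}$, which is why the \emph{projected} norm appears rather than $\|y\|$. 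The one mildly delicate step is confirming that no earlier $p$ produces a collision: this is automatic because, for any fixed $y$, the inequality $\cot p\leq\langle\eta,y\rangle/(1-\langle x,y\rangle)$ first becomes tight at the unique $p$ solving it with equality, by strict monotonicity of $\cot$ on $(0,\pi)$. I expect the main bookkeeping obstacle to be simply verifying that all the intermediate quantities $(\sin p,\ 1-\langle x,y\rangle,\ \text{etc.})$ carry the right signs, which the foregoing remarks dispose of.
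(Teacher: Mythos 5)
Your proposal is correct and follows essentially the same route as the paper's Appendix 1 proof: parametrise the normal geodesic as $\cos p\,x+\sin p\,\eta$, convert the metric-projection condition into the inequality $\cot p \geq \langle \eta,y\rangle/(1-\langle x,y\rangle)$, and optimise over unit normals $\eta\in T_x^\perp M$ to produce $\|P_{T_x^\perp M}y\|$. The only cosmetic difference is in the sign bookkeeping: the paper truncates at $\pi/2$ and works with the positive part $\langle y,\eta_x\rangle^+$, while you use $\cot$ on all of $(0,\pi)$ and verify at the end that the combined supremum is non-negative so that squaring commutes with the suprema --- both are fine.
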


We are now in a position to derive the global reach of our random manifold $h^k(M)$. The result is given in the next lemma. However, before stating and proving the lemma, we need some preparatory definitions. 

Recalling the  embedding maps  $h^k$ and the components $f^k$ of \eqref{embedding}, let  $(X_1,..,$ $X_m)$ be a frame bundle of full rank over $M$, and define the $k\times (m+1)$ matrix
$$L_x\ =\ \begin{bmatrix}
      f_1(x) &X_1f_1(x)& \cdots&X_mf_1(x)           \\
      \vdots &  \vdots          &  \vdots& \vdots \\
       f_k(x)           & X_1f_k(x)& \cdots&X_mf_k(x)
     \end{bmatrix},$$
and the projection matrix $$P_x=L_x(L_x^ {T}L_x)^{-1}L_x^ {T}.$$
By the  independence of the $f_j$ and the non-degeneracy of Assumption \ref{f:assump}, the rows of $L_x$ are a.s.\ linearly independent, and so  
$L_x^ {T}L_x$ is a.s.\ invertible.  The matrix $P_x$
orthogonally  projects vectors in $\real^k$ onto  
 \beqq \text{span}\left(h^k(x),\,h^k_{*}(X_i),\,1\leq i\leq m\right), 
 \eeqq
considered as a subset of $T_{h^k(x)}\mathbb{R}^k$,
where $h^k_{*}: T_x M\rightarrow T_{h^k(x)}S^{k-1}$ is the usual push forward operator.

Consider now the following expression, well known from the Statistics literature as the maximum likelihood estimate based on $k$ samples of the correlation coefficient between $f(x)$ and $f(y)$; viz. 
\begin{equation}
\widehat{\C}_k(x,y)=\frac{\sum_{j=1}^kf_j(x)f_j(y)}{\sqrt{\sum_{j=1}^k(f_j(x))^2\sum_{j=1}^k(f_j(y))^2}}.
\label{MLest}
\end{equation}
Consider the conditional process $f^x(y)$ defined on $\tM$ by \eqref{newpr:eq}
%
and denote $k$ i.i.d.\ realizations of it at $y$ by $$f^{x,k}(y)=(f^x_1(y),\cdots,f^x_k(y)).$$
Define an `error process' 
\begin{equation}
E^{x,k}(y)=\frac{k}{\|f^k(y)\|^2}\frac{(1-\C(x,y))^2}{(1-\widehat{\C}_k(x,y))^2}\left(\frac{1}{k}\|P_x f^{x,k}(y)\|^2\right).
\label{errt}
\end{equation}
The key lemma to be proven before starting probabilistic calculations is the following.

\begin{lemma}
Let  $M$ be a manifold satisfying the conditions of Assumption \ref{man:ass}, embedded into $S^{k-1}$ via the embedding map defined in (\ref{embedding}). Assume that $f$ satisfies Assumptions \ref{indmet:assump}  and \ref{f:assump}. Then, with probability one, the reach  of $h^k(M)$ is given by
\begin{equation}
\cot^2 \theta_k
=\sup_{x\in M}\sup_{y\in M\setminus \{x\}}\frac{k}{\|f^k(y)\|^2}\frac{(1-\C(x,y))^2}{(1-\widehat{\C}_k(x,y))^2}\left(\frac{1}{k}\|f^{x,k}(y)\|^2\right)-E^{x,k}(y).
\label{subeq}
\end{equation}
\label{lma:implemma}
\end{lemma}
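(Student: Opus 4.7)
The plan is to apply Lemma \ref{crm} to the submanifold $h^k(M)\subset S^{k-1}$, and then rewrite each ingredient of the resulting expression in terms of the statistical quantities appearing in \eqref{subeq}. Using $h^k(x)=f^k(x)/\|f^k(x)\|$ and the pushforward formula $h^k_*(X_i)=X_i f^k(x)/\|f^k(x)\| - \langle f^k(x),X_if^k(x)\rangle f^k(x)/\|f^k(x)\|^3$, one sees that
$$\text{span}\{h^k(x),h^k_*(X_1),\ldots,h^k_*(X_m)\}\ =\ \text{span}\{f^k(x),X_1f^k(x),\ldots,X_mf^k(x)\},$$
which is exactly the column span of $L_x$. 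Thus $P_x$ is orthogonal projection of $\real^k$ onto $\text{span}(T_{h^k(x)}h^k(M)\oplus\{h^k(x)\})$, so $I-P_x$ projects onto $T^\perp_{h^k(x)}h^k(M)$ in the sense of Lemma \ref{crm}. Substituting $y'=h^k(y)=f^k(y)/\|f^k(y)\|$ into that lemma, and using $\|h^k(y)\|=1$ together with $\langle h^k(x),h^k(y)\rangle=\widehat{\C}_k(x,y)$, yields
$$\cot^2\theta_k(x)\ =\ \sup_{y\in M\setminus\{x\}}\frac{\|(I-P_x)\,f^k(y)\|^2}{\|f^k(y)\|^2\,(1-\widehat{\C}_k(x,y))^2}.$$

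The next step is to relate $(I-P_x)f^k(y)$ to $f^{x,k}(y)$. Because $f_1,\ldots,f_k$ are i.i.d.\ copies of $f$, the Gaussian conditional expectation $\E\{f_j(y)\,|\,f_j(x),\nabla f_j(x)\}$ is a linear combination of $f_j(x),X_1 f_j(x),\ldots,X_m f_j(x)$ with deterministic coefficients $\alpha_i(x,y)$ depending only on $\C$ and its covariant derivatives, not on $j$. Assembling the $k$ coordinates, the definition \eqref{newpr:eq} becomes
$$f^k(y)\ =\ (1-\C(x,y))\,f^{x,k}(y)\ +\ \alpha_0(x,y)\,f^k(x)\ +\ \sum_{i=1}^m \alpha_i(x,y)\,X_i f^k(x).$$
The last $m+1$ terms all lie in the column span of $L_x$ and are therefore annihilated by $I-P_x$, leaving
$$(I-P_x)\,f^k(y)\ =\ (1-\C(x,y))\,(I-P_x)\,f^{x,k}(y).$$
Substituting this into the previous display, expanding $\|(I-P_x)f^{x,k}(y)\|^2 = \|f^{x,k}(y)\|^2-\|P_xf^{x,k}(y)\|^2$, and multiplying and dividing by $k$ produces the main term and the subtracted error $E^{x,k}(y)$ of \eqref{errt} exactly; taking $\sup_{x\in M}$ then gives $\cot^2\theta_k$ as claimed.

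The only obstacle of substance is the bridge between the Euclidean residual $(I-P_x)f^k(y)$ in $\real^k$ and the Hilbert-space (conditional expectation) residual $(1-\C(x,y))f^{x,k}(y)$; this works only because the deterministic projection coefficients $\alpha_i(x,y)$ are common to all $k$ independent copies. Almost sure validity of every step is guaranteed by Assumption \ref{f:assump}: non-degeneracy makes $L_x^T L_x$ invertible and $\|f^k(y)\|>0$, while positive definiteness of $\C$ on $\tM$ keeps $1-\C(x,y)$ and $1-\widehat{\C}_k(x,y)$ nonzero for $y\neq x$.
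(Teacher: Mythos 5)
Your argument is correct and follows essentially the same route as the paper's proof: apply Lemma \ref{crm} to $h^k(M)$, identify $\langle h^k(x),h^k(y)\rangle$ with $\widehat{\C}_k(x,y)$, use that the Gaussian conditional expectation is a fixed ($j$-independent) linear combination of $f_j(x)$ and $X_if_j(x)$ so that $(I-P_x)f^k(y)=(1-\C(x,y))(I-P_x)f^{x,k}(y)$, and finish with Pythagoras to split off $E^{x,k}(y)$. You merely make explicit some steps the paper leaves implicit (the pushforward computation identifying the column span of $L_x$ with $\mathrm{span}(h^k(x),h^k_*(X_i))$), which is fine.
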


\begin{proof}
The global critical radius is obtained by taking infima of local critical radii as given in (\ref{globc}). However, since the cotangent is decreasing in the first quadrant, we have
$$\cot^2\theta_k\ =\ \sup_{x\in M}\cot^2\theta_k(x).$$
Using the result from Lemma \ref{crm}, the above is equal to
\beq
\sup_{x\in M}\sup_{y\in M\setminus \{x\}}\frac{\|(I-P_x)h^k(y)\|^2}{\left(1-\langle h^k(x),h^k(y)\rangle\right)^2}.
\label{star}
\eeq
Since $f$ is centered Gaussian, its derivatives are also centered Gaussians. Thus, the orthogonal projection $P_x(f(y))$ of $f(y)$ onto the space spanned by $f(x)$ and $\nabla f(x)$ is 
$$\E \left\{f(y)\,\big|\, f(x),\nabla f(x)\right\}.$$ 
This  observation, along with  \eqref{newpr:eq},  \eqref{star}, and the fact that $(I-P_x)f^x(y)=f^x(y)$, show that 
\beqq
\cot^2\theta_k\ = \ \sup_{x\in M}\sup_{y\in M\setminus \{x\}}\frac{k}{\|f^k(y)\|^2}\frac{(1-\C(x,y))^2}{(1-\widehat{\C}_k(x,y))^2}\left(\frac{1}{k}\|(I-P_x)f^{x,k}(y)\|^2\right).
\eeqq
From the fact that we have orthogonal projections, this is 
$$\sup_{x\in M}\sup_{y\in M\setminus \{x\}}\frac{k}{\|f^k(y)\|^2}\frac{(1-\C(x,y))^2}{(1-\widehat{\C}_k(x,y))^2}\left(\frac{1}{k}\|f^{x,k}(y)\|^2\right)-E^{x,k}(y),$$
and the  lemma is proven.
\end{proof}

We shall see later that the error term $E^{x,k}(y)$ in  \eqref{mainequ} goes to zero, and so we shall be primarily concerned 
with the  
the a.s.\ convergence of
\begin{equation}
\sup_{x\in M}\sup_{y\in M\setminus \{x\}}\frac{k}{\|f^k(y)\|^2}\frac{(1-\C(x,y))^2}{(1-\widehat{\C}_k(x,y))^2}\left(\frac{1}{k}\|f^{x,k}(y)\|^2\right).
\label{mainequ}
\end{equation}
For this, we need to establish convergence results for the three terms here. 
The results  we need are stated as four lemmas in the next section.

\section{Four key lemmas and the proof of the main theorem}
\label{4lemmas}
The proof of Theorem \ref{maintheorem} follows from the four lemmas stated below and is given at the end of this section. Throughout this section we shall assume, without further comment, that $M$ satisfies the conditions of Assumption \ref{man:ass}. The conditions on $f$ vary, since not all the lemmas require the same level of smoothness. All the conditions, however, are implied by Assumptions  \ref{indmet:assump}  and \ref{f:assump}.

We start by showing that the first two terms in (\ref{mainequ}) converge uniformly,  with probability one, to $1$.

\begin{lemma}
Let $f^k$ be a $\real^k$-valued random process on $M$,  with i.i.d.\ components, each a centered, unit variance Gaussian process  over $M$, with a.s.\ continuous sample paths. Then, with probability one, 
$$\lim_{k\rightarrow\infty}\sup_{y\in M} \left|\frac{k}{\|f^k(y)\|^2}-1\right|\ =\ 0.$$
\label{firstterm}
\end{lemma}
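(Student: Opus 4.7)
The plan is to reduce the assertion to a uniform strong law of large numbers for i.i.d.\ continuous random functions on the compact space $M$. Write
\beqq
\frac{1}{k}\|f^k(y)\|^2 - 1 \ =\ \frac{1}{k}\sum_{j=1}^k \xi_j(y), \qquad \xi_j(y) \ \definedas \ f_j(y)^2 - 1,
\eeqq
where the $\xi_j$ are i.i.d.\ continuous real-valued random processes on $M$, with $\E\{\xi_j(y)\}=0$ for every $y\in M$. Viewed as random elements of the separable Banach space $C(M)$ (with supremum norm), the $\xi_j$ are i.i.d.\ and centered.

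The first step is to verify integrability of the envelope. Since each $f_j$ is a centered, unit variance Gaussian process with a.s.\ continuous sample paths on the compact set $M$, the Borel--TIS inequality (see, e.g., Theorem 2.1.1 of \cite{Adler}) implies that $\sup_{y\in M}|f_j(y)|$ has Gaussian tails, so that
\beqq
\E\Big\{\sup_{y\in M}|\xi_j(y)|\Big\}\ \leq\ \E\Big\{\sup_{y\in M}f_j(y)^2\Big\} + 1 \ <\ \infty.
\eeqq
Hence $\xi_j$ is Bochner integrable in $C(M)$, with mean the zero function.

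The second step is to invoke Mourier's strong law of large numbers in the separable Banach space $C(M)$: for i.i.d.\ centered Bochner integrable summands, the averages converge a.s.\ in norm to zero. This immediately yields
\beqq
\sup_{y\in M}\left|\frac{1}{k}\|f^k(y)\|^2 - 1\right| \ \convas \ 0, \qquad k\to\infty.
\eeqq

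Finally, since $\|f^k(y)\|^2/k \to 1$ uniformly on $M$, almost surely there exists (random) $k_0$ so that for $k\geq k_0$ we have $\inf_{y\in M}\|f^k(y)\|^2/k \geq \half$. Writing
\beqq
\frac{k}{\|f^k(y)\|^2} - 1 \ =\ \frac{1 - \|f^k(y)\|^2/k}{\|f^k(y)\|^2/k},
\eeqq
uniform convergence of the numerator to $0$ together with the uniform lower bound on the denominator gives the claimed convergence. The only subtlety is the envelope integrability, which is handled cleanly by Borel--TIS; everything else is routine.
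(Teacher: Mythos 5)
Your proof is correct and follows essentially the same route as the paper: a strong law of large numbers in the separable Banach space $C(M)$ applied to the centered summands $f_j^2-1$, with the Borell--TIS inequality supplying the required integrability of the sup-norm (the paper cites Corollary 7.10 of Ledoux--Talagrand, which is the same Banach-space SLLN you invoke via Mourier). Your explicit final step passing from uniform convergence of $\|f^k(y)\|^2/k$ to that of its reciprocal is a detail the paper leaves implicit, and it is handled correctly.
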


\begin{lemma}
Let $f^k$ be as in the Lemma \ref{firstterm}, but also $C^3$. Denote the covariance function  of its components by  $\C(x,y)$, 
and let $\widehat{\C}_k(x,y)$ be as defined in (\ref{MLest}).  Then, with probability one,
$$\lim_{k\rightarrow\infty}\sup_{(x,y)\in \tM}\left|\left(\frac{1-\C(x,y)}{1-\widehat{\C}_k(x,y)}\right)^2-1\right|\ =\ 0.
$$
\label{secondterm}
\end{lemma}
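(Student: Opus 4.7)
The substantive difficulty is that both $1-\C(x,y)$ and $1-\widehat\C_k(x,y)$ vanish on the diagonal of $M\times M$, so uniform convergence $\widehat\C_k\to\C$ on $M\times M$ cannot control the ratio by itself. The natural approach is to split $\tM$ into a thin tubular neighborhood $U_\delta=\{(x,y)\in \tM:d_M(x,y)<\delta\}$ of the diagonal and its complement $\tM\setminus U_\delta$, and to handle the two regions with different tools: uniform SLLN on the complement and a Taylor expansion on $U_\delta$.

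First I would establish the tool that drives everything: a uniform strong law saying that, almost surely, $\widehat\C_k\to \C$ not only uniformly but together with its derivatives in $(x,y)$ up to order three. Because the sample paths of $f$ are $C^3$ on a compact manifold and the products $f_j(x)f_j(y)$ are i.i.d.\ random elements of the separable Banach space $C^3(M\times M)$ with mean $\C$ and a.s.\ finite norm, the Banach-space SLLN applied to $f_j f_j$ and to $f_j^2$, combined with Lemma \ref{firstterm} for the denominator $\|f^k\|/\sqrt{k}$, gives $\widehat\C_k\to \C$ in the $C^3(M\times M)$ norm almost surely. In particular the third derivatives of $\widehat\C_k$ are a.s.\ uniformly bounded for all large $k$.

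On the complement $\tM\setminus U_\delta$, Assumption \ref{f:assump} (non-degeneracy of the induced metric) together with compactness of $M$ gives $\C(x,y)<1$ off the diagonal, hence $\inf_{d_M(x,y)\geq\delta}(1-\C(x,y))=c_\delta>0$. Uniform convergence $\widehat\C_k\to\C$ then trivially yields $\sup_{d_M\geq\delta}|(1-\C)/(1-\widehat\C_k)-1|\to 0$ a.s. On $U_\delta$ I would work in normal coordinates at $x$, writing $y=\exp_x(v)$ with $\|v\|_g<\delta$. Because $\C(x,x)\equiv 1$ and, by constant variance, $\partial_{y^i}\C(x,y)|_{y=x}\equiv 0$, Taylor expansion in $v$ gives
\begin{equation*}
1-\C(x,\exp_x v)\ =\ \tfrac12 g_x(v,v)+R_\C(x,v),\qquad |R_\C(x,v)|\le K\|v\|_g^3,
\end{equation*}
with $K$ determined by the (continuous) third derivatives of $\C$. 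An identical computation applies to $\widehat\C_k$: using $\widehat\C_k(x,x)=1$, the identity $1-\widehat\C_k(x,y)=\tfrac12\|h^k(x)-h^k(y)\|^2$, and a quick quotient-rule check that $\partial_{y^i}\widehat\C_k(x,y)|_{y=x}=0$, one obtains
\begin{equation*}
1-\widehat\C_k(x,\exp_x v)\ =\ \tfrac12 \hat g_k(x)(v,v)+R_k(x,v),\qquad |R_k(x,v)|\le K'\|v\|_g^3,
\end{equation*}
where $\hat g_k(x)(u,u)=\langle h^k_*u,h^k_*u\rangle$ is the pullback metric under $h^k$, and $K'$ is bounded a.s.\ uniformly in $k$ by Step 1.

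Finally I would divide numerator and denominator of $(1-\C)/(1-\widehat\C_k)$ by $\|v\|_g^2$. The leading term $g_x(v,v)/\|v\|_g^2$ is identically $1$, the remainders contribute $O(\|v\|_g)$, and Step 1 gives $\hat g_k(x)(v,v)/\|v\|_g^2\to 1$ uniformly in $x\in M$ and $v\neq 0$. Hence on $U_\delta$ the ratio tends to $1$ uniformly, provided $\delta$ is taken small enough (and then $k$ large enough). Combining with Step 2 finishes the proof. The main obstacle I anticipate is the uniform SLLN in the $C^3$ norm, i.e.\ passing from pointwise a.s.\ convergence of $\widehat\C_k$ to uniform convergence of $\widehat\C_k$ together with its derivatives; the rest of the argument is a standard blow-up-at-the-diagonal Taylor computation once that ingredient is in hand.
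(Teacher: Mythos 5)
Your argument is correct, but it is a genuinely different route from the one in the paper. You prove the lemma by a strong law in a strong norm plus a blow-up at the diagonal: an SLLN for $f_j\otimes f_j$ and $f_j^2$ in the separable Banach space of $C^3$ functions gives a.s.\ convergence $\widehat{\C}_k\to\C$ together with derivatives, after which the Taylor expansions $1-\C(x,\exp_x v)=\smallhalf g_x(v,v)+O(\|v\|^3)$ and $1-\widehat{\C}_k(x,\exp_x v)=\smallhalf\|h^k_*v\|^2+O(\|v\|^3)$ (using $\widehat{\C}_k(x,y)=\langle h^k(x),h^k(y)\rangle$, the exact vanishing of $\partial_{y^i}\widehat{\C}_k$ on the diagonal, and uniform control of third derivatives) handle a tubular neighbourhood of the diagonal, while off the diagonal $1-\C\geq c_\delta>0$ makes uniform convergence of $\widehat{\C}_k$ suffice. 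The paper instead writes the ratio in terms of $\zeta_k=\sqrt{k}(\widehat{\C}_k-\C)/(1-\C^2)$, proves weak convergence of $\sqrt{k}(\widehat{\C}_k-\C)$ in the $\Btwo$ norm (Anderson's theorem for the finite-dimensional distributions plus a long tightness computation), handles the diagonal by showing the map $\phi\mapsto\phi/(1-\C^2)$ is a.s.\ continuous on the limit paths via repeated L'H\^opital arguments, and then uses the extra $1/\sqrt{k}$ factor to conclude; it also needs the bias/variance expansions of Appendix 2. Your approach is more elementary and shorter for this lemma, avoiding any central-limit-type analysis and the bias computation; what the paper's route buys is the distributional information about $\zeta_k$ (and the tightness machinery in $\Btwo$), which is reused essentially verbatim in the fluctuation result of Theorem \ref{thm:mainweak}, so it does double duty. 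Two small points: the fact that $\C(x,y)<1$ off the diagonal comes from the standing assumption that $\C$ is positive definite on $M\times M$ (non-degeneracy of the pair $(f(x),f(y))$), not from the non-degeneracy of the induced metric; and in your diagonal step the constants $K,K'$ and hence the choice of $\delta$ may be random, which is harmless for an almost-sure statement but worth saying explicitly, fixing $\delta$ below the injectivity radius so the normal-coordinate parametrisation is uniform over $x$.
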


The third lemma (after some trivial calculations) will -- see below -- give us that the remaining term  in (\ref{mainequ})   converges to the 
parameter $\sigma^2_c(f)$.
\begin{lemma}
Under the same assumptions as in Lemma \ref{secondterm}, and with probability one,
\beq
\lim_{k\rightarrow\infty}\sup_{(x,y)\in\tM}\left|\frac{\|f^{x,k}(y)\|^2}{k}-\text{\rm Var}\left(f^x(y)\right)\right|\ =\ 0.
\eeq
\label{thirdterm}
\end{lemma}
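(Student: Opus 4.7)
The target is a uniform strong law of large numbers for the statistic $\phi_j(x,y) = (f^x_j(y))^2$ indexed by $(x,y) \in \tM$. Since $\|f^{x,k}(y)\|^2/k = k^{-1}\sum_{j=1}^k \phi_j(x,y)$, where the $\phi_j$ are i.i.d.\ copies of $(f^x(y))^2$, the quantity inside the supremum is exactly an empirical mean minus its pointwise expectation $\Var(f^x(y))$. My plan is to reduce the statement to a Banach-space SLLN on the compact parameter space $M\times M$.

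The first step is to extend $f^x(\cdot)$ to a jointly continuous Gaussian field on $M\times M$. The numerator of \eqref{newpr:eq} is the regression residual $f(y)-\E\{f(y)\mid f(x),\nabla f(x)\}$, a deterministic-linear functional of $f$, so $(x,y)\mapsto f^x(y)$ is jointly Gaussian on $\tM$. Using the $C^3$ hypothesis on $f$, Taylor expansions of both numerator and the denominator $1-\C(x,y)$ show that both vanish to second order in $\|y-x\|$, and the $0/0$ limit as $y\to x$ exists and defines a continuous centered Gaussian field on the compact $M\times M$ with a continuous variance $\Var(f^x(y))$. (This extension is precisely what the paper defers to its ``limit'' subsection, and I would simply invoke it here.) After extension, $\sup_{\tM}$ equals $\sup_{M\times M}$.

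The second step is the Banach-space SLLN. Because the extended $f^x(y)$ is a continuous Gaussian field on the compact $M\times M$, Borell--TIS gives $\E\bigl[\sup_{(x,y)\in M\times M}(f^x(y))^{2p}\bigr]<\infty$ for every $p\geq 1$. Hence $Z_j(x,y):=(f^x_j(y))^2-\Var(f^x(y))$ defines an i.i.d.\ sequence of centered random elements of the separable Banach space $C(M\times M)$ equipped with the supremum norm, with $\E\|Z_1\|_\infty<\infty$. Mourier's strong law for Bochner-integrable i.i.d.\ variables in a separable Banach space then yields $k^{-1}\sum_{j=1}^k Z_j\to 0$ almost surely in the uniform norm, which is exactly the conclusion.

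The main obstacle is the first step, the continuous extension to the diagonal. Establishing \emph{joint} continuity of $f^x(y)$ at diagonal points, rather than mere continuity of the limit along each fixed tangent direction $(y-x)/\|y-x\|$, requires uniform control of the third-order Taylor remainders in both the regression coefficients (expressed through $\C$, $\nabla\C$, $\nabla^2\C$) and in the denominator; this is precisely where the $C^3$ assumption does the work, via a uniform bound on the relevant derivatives of $\C$ on the compact $M\times M$. Once joint continuity and the Gaussian moment bound are in hand, the passage to the uniform SLLN via Mourier's theorem is routine.
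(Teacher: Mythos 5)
There is a genuine gap at the foundation of your argument: the claimed continuous extension of $f^x(y)$ to the diagonal of $M\times M$ is false, and no amount of smoothness of $f$ will restore it. The $0/0$ limit at a diagonal point exists only along curves, and it depends on the direction of approach: if $y\to x$ along a curve with unit tangent $X\in T_xM$, an application of L'H\^opital's rule (twice) gives
\begin{equation*}
\lim_{y\to x} f^x(y) \;=\; f(x)\;-\;\frac{\nabla^2 f(x)(X,X)}{\nabla^2\C(x,x)(X,X)},
\end{equation*}
and since the random Hessian $\nabla^2 f(x)$ is a.s.\ not proportional to the metric (for $m\ge 2$), this limit genuinely varies with $X$. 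The same is true of $\Var\left(f^x(y)\right)$. Hence $(x,y)\mapsto f^x(y)$ admits no jointly continuous extension to the compact square, your identification of $\sup_{\tM}$ with $\sup_{M\times M}$ fails, and the plan of applying the Banach-space SLLN in $C(M\times M)$ collapses at step one. There is also a circularity worth noting: you invoke Borell--TIS to get moments of $\sup (f^x(y))^2$, but Borell--TIS presupposes that the supremum is a.s.\ finite, which is exactly what the contested extension step was supposed to deliver.

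The repair is to downgrade the goal from continuity at the diagonal to a.s.\ boundedness near it, which is what the paper does: the directional-limit computation above shows that $f^x(\cdot)$ stays a.s.\ bounded on $\tM$ (equivalently, it extends continuously to the spherical blow-up of the diagonal, where a point of the diagonal is replaced by the unit sphere of directions), and then one applies the same i.i.d.\ Banach-space strong law (Ledoux--Talagrand, Corollary 7.10 --- the result you call Mourier's theorem) to $X=(f^x(y))^2-\Var\left(f^x(y)\right)$, but in the space $C_b(\tM)$ of bounded continuous functions on $\tM$ with the supremum norm rather than in $C(M\times M)$. With that substitution your second step goes through essentially as you wrote it; the error is confined to, but is fatal for, the claimed extension in your first step.
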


It will follow from the proof of this lemma that $f^x(y)$ is bounded even when we are arbitrarily close to diag($M\times M$). 
This is needed to ensure that all the terms defined in  (\ref{mainequ}) are, a priori, well defined.

The final step we need is the following.

\begin{lemma}
Under the same assumptions as in Lemma \ref{secondterm}, and with  $E^{x,k}(y)$ as defined in (\ref{errt}), we have, with probability one,
$$\lim_{k\rightarrow\infty}\sup_{(x,y)\in\tM}E^{x,k}(y)\ =\ 0.$$
\label{errorterm}
\end{lemma}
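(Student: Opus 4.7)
The plan is to factor $E^{x,k}(y)$ as the product of three pieces,
$$E^{x,k}(y) \ = \ \frac{k}{\|f^k(y)\|^2} \cdot \frac{(1-\C(x,y))^2}{(1-\wC_k(x,y))^2} \cdot \frac{1}{k}\|P_x f^{x,k}(y)\|^2,$$
and observe that Lemmas \ref{firstterm} and \ref{secondterm} already force the first two factors to tend to $1$ uniformly on $\tM$, so their product is uniformly bounded for large $k$. All the work is therefore in showing that the last factor tends to $0$ uniformly, almost surely. Intuitively, since $P_x$ has fixed rank $m+1$ while the ambient dimension is $k$, we expect $\|P_x f^{x,k}(y)\|^2$ to be of order one and its rescaling by $1/k$ to vanish.

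To make this quantitative, I would use the explicit form $P_x = L_x(L_x^T L_x)^{-1}L_x^T$ to write
$$\frac{1}{k}\|P_x f^{x,k}(y)\|^2 \ = \ U_k(x,y)^T\, V_k(x)^{-1}\, U_k(x,y),$$
where $U_k(x,y) = k^{-1}L_x^T f^{x,k}(y) \in \real^{m+1}$ and $V_k(x) = k^{-1}L_x^T L_x \in \real^{(m+1)\times(m+1)}$. The entries of $V_k(x)$ are sample means of i.i.d.\ smooth functionals of a single copy of $f$, and a uniform strong law (of the type already used in the proof of Lemma \ref{secondterm}) gives $V_k(x)\to A(x)$ a.s.\ uniformly on $M$, where $A(x)$ is block diagonal with scalar block $1$ (since $\Var f(x)\equiv 1$) and block $g_x$ (the induced metric). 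Assumption \ref{f:assump} guarantees that $A(x)$ is uniformly positive definite, so $V_k(x)^{-1}$ is uniformly bounded for all sufficiently large $k$.

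The heart of the proof is showing $U_k(x,y)\to 0$ uniformly on $\tM$, almost surely. Here we exploit the fundamental property that, by the very definition \eqref{newpr:eq} of $f^x(y)$ as a residual from a Gaussian conditional expectation, $f^x(y)$ is uncorrelated with, hence independent of, $f(x)$ and every $X_if(x)$. Consequently each coordinate of $U_k(x,y)$ is $k^{-1}$ times a sum of $k$ i.i.d.\ centered random variables, and so tends to $0$ pointwise by the standard SLLN. Combining this with the bound from Step 2 immediately yields the claim, provided the convergence is uniform.

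The main obstacle is precisely the uniformity of $U_k\to 0$ over $\tM$, particularly as $(x,y)$ approaches the diagonal where $f^x(y)$ has the delicate $0/0$ form. The strategy is to apply the same continuous-extension argument that underlies Lemma \ref{thirdterm}: under the $C^3$ hypothesis, both $(x,y)\mapsto f(x)f^x(y)$ and $(x,y)\mapsto X_if(x)\,f^x(y)$ extend continuously from $\tM$ to $M\times M$, after which a Glivenko--Cantelli-type uniform law for i.i.d.\ continuous Gaussian fields on a compact space upgrades the pointwise convergence to uniform a.s.\ convergence. Granted this, the quadratic-form expression above forces $k^{-1}\|P_x f^{x,k}(y)\|^2\to 0$ uniformly, and Lemma \ref{errorterm} follows.
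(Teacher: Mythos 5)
Your proposal is correct, but it handles the key step by a genuinely different argument than the paper's. Like the paper, you reduce via Lemmas \ref{firstterm} and \ref{secondterm} to showing $\sup_{(x,y)\in\tM}k^{-1}\|P_xf^{x,k}(y)\|^2\to 0$ a.s. The paper then argues crudely: since $f^x(\cdot)$ is a.s.\ bounded on $\tM$ with Gaussian-like tails for its supremum (a byproduct of the proof of Lemma \ref{thirdterm}), the maximum over the $k$ i.i.d.\ copies $f^x_1,\dots,f^x_k$ is a.s.\ $O(\sqrt{\log k})$, and the fact that $P_x$ has rank $m+1$ then yields the explicit bound \eqref{bound:sec10}, $k^{-1}\|P_xf^{x,k}(y)\|^2\le C(m+1)\log k/k$. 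You instead expand the quadratic form, $k^{-1}\|P_xf^{x,k}(y)\|^2=U_k^TV_k^{-1}U_k$, and exploit the orthogonality built into \eqref{newpr:eq}: the residual $f^x(y)$ is independent of $f(x)$ and $\nabla f(x)$, so each coordinate of $U_k$ is a centered empirical mean to which the uniform strong law (Theorem \ref{t1}, the same tool as in Lemmas \ref{firstterm} and \ref{thirdterm}) applies, while $V_k$ converges uniformly to a uniformly positive definite Gram matrix. Your route makes transparent \emph{why} the projection term is negligible (the residual is orthogonal, in mean, to the projection subspace), and it avoids any maximal inequality; the paper's route, on the other hand, buys an explicit $O(\log k/k)$ rate which is recycled in Section \ref{sec:fluctuation} to conclude that $\sqrt{k}\,E^{x,k}(y)\to0$. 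Your SLLN argument as written gives convergence with no rate, so it proves the lemma but would not by itself support that later step.

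Two small points to tighten. First, $f^x(y)$, and hence $f(x)f^x(y)$ and $X_if(x)f^x(y)$, do \emph{not} extend continuously to $M\times M$: Section \ref{limit:subsec} shows the diagonal limit depends on the direction of approach. So the uniform law should be run, exactly as in the paper's proof of Lemma \ref{thirdterm}, in $C_b(\tM)$ using a.s.\ boundedness and integrability of the supremum (Borell--TIS plus Cauchy--Schwarz for the products), rather than via a continuous extension; the conclusion is unaffected. Second, the uniform strong law you cite is the one used for Lemmas \ref{firstterm} and \ref{thirdterm} (not Lemma \ref{secondterm}), and for the uniform control of $V_k^{-1}$ one should note that a continuous full-rank frame need only exist chart-by-chart, which suffices by compactness of $M$ --- the same local convention the paper adopts in Section \ref{sec:notation}. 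Neither point affects the validity of your approach.
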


We now show how to prove the main result as a straightforward consequence of the previous four lemmas.

\begin{proof}[Proof of Theorem \ref{maintheorem}]

It is immediate from the results of Lemmas \ref{firstterm}, \ref{secondterm} and \ref{errorterm} that, with probability one,

\begin{equation}
\lim_{k\rightarrow\infty}\cot^2\theta_k\ =\ \lim_{k\rightarrow\infty}\sup_{(x,y)\in\tM}\frac{\|f^{x,k}(y)\|^2}{k},
\label{limex}
\end{equation}
and we shall be done once we show that the right hand limit  is $\sigma^2_c(f)$. 

However, this is immediate from the much stronger result in Lemma \ref{thirdterm} that  
$$\lim_{k\rightarrow\infty}\sup_{(x,y)\in\tM}\left|\frac{\|f^{x,k}(y)\|^2}{k}-\text{Var}\left(f^x(y)\right)\right|\ =\ 0$$
and that, by definition, 
\beqq
\sigma^2_c(f) \ =\  \sup_{(x,y)\in\tM} \text{Var}\left(f^x(y)\right).
\eeqq
This completes the proof of Theorem \ref{maintheorem}, modulo proving the four lemmas.
%
%
\end{proof}

\section{Some (standard) notation}
\label{sec:notation}
Many of the proofs to follow freely use standard notation from Differential Geometry. Since we expect that not all readers will be familiar with this, we include here a brief notational guide.  There are many standard texts to which one can turn for details. Lee's book \cite{Lee} is our favourite,  but the quick and dirty treatment in Chapter 7 of \cite{Adler} also suffices.

{
We are working with a Riemannian manifold $(M,g)$, for which the Riemannian metric is, for each $x\in M$, an inner product $g_x$ on the tangent space $T_xM$ to $M$ at $x$. If $\{(U_\alpha,\phi_\alpha)\}_\alpha$ is an atlas for $M$, then for each chart  $(U_\alpha,\phi_\alpha)$ we shall often need a (local) orthonormal frame field 
$X^{\alpha}=\{X_1^\alpha,\dots,X^\alpha_m\}$  for the tangent bundle  $T^\alpha M\definedas \{T_xM,\ x\in U_\alpha\}$, where orthonormality is in the  metric $g$.  We shall refer to this later as ``choosing an orthonormal frame field", without specific reference to  charts or the index $\alpha$. Since all
our later definitions and calculations are local (i.e.\ can be carried out in terms of local charts) this is not a problem (and global issues such as parallelizability do not arise.)}

If $F\:M\to\real$ is a smooth function, and $X=X_x\in T_xM$, then by 
\beqq
XF(x),\ \ (XF)(x),\ \ (X_xF)(x),\ \ X_xF(x), \ \ X_xF_x,\ \ \text{etc},
\eeqq
 we mean the derivative of $F$ in direction $X_x$ at $x$. At various times we will make use of all of these possible notations, so as to make individual formulae either clear and/or compact.
 
As opposed to the above derivatives, the gradient,  $\grad F$, of $F$ 
is the unique continuous vector field 
on $M$ such that
\beq
\label{geometry:gradient:equation}
  g_x(\grad F_x, X_x)\  = \ X_xF,
\eeq
 for every   vector field $X$. If $F$ is a function of more than one parameter, say $F(x,y)$, then we will denote the gradient with respect to $x$ as $\grad_xF(x,y)$, etc.

The (covariant) Hessian $\Hess F$ 
   is the bilinear symmetric map from 
 $C^1(T(M)) \times C^1(T(M))$ to $ C^0(M)$ 
(i.e.\ it is a double differential
 form)  defined by  
\beq
\label{geometry:Hessian:equation}\qquad
 (\Hess F)(X,Y) \equiv  \Hess F(X,Y)  \definedas    XYF - \nabla_XYF =  g(\nabla_X \grad F, Y),
\eeq
 where, while $\nabla$ with no subscript denotes the gradient, when it is subscripted with a vector field, as in $\nabla_X$, it indicates the  the 
 Levi-Civita connection of $(M,g)$. 

It is standard that $\Hess F$ 
could also have been defined to be $\nabla(\nabla F)$, which is from where the notation comes. Recall that 
in the simple Euclidean case the Hessian is typically considered to be the $N\times N$ matrix 
 $H_F=(\partial^2
F/\partial x_i\partial x_j)_{i,j=1}^N$. In the more general setting above, $H_F$  defines the   two-form by setting
$\nabla^2f(X,Y)= XH_FY'$. In this case
\eqref{geometry:Hessian:equation} follows from simple
calculus.
 
 We shall need the obvious, but important, fact that if $x$ is a critical point of $F$ (i.e.\ $\nabla F(x)=0$)
 then $XF(x)=0$ for all $X\in T_xM$  and so by 
\eqref{geometry:Hessian:equation} it follows that 
$ \Hess F(X,Y)(x)= XYF(x)$. Consequently, at these points, the  
Hessian  is independent of the metric $g$.

This concludes our brief excursion into notation. We can now turn to the proofs of the four lemmas of Section \ref{critradcomp}.

\section{Proof of Lemma \ref{firstterm}} 
\label{sec:lemma1}
We need to prove that
\beq
\lim_{k\rightarrow\infty}\sup_{y \in M}\left|\frac{k}{\|f^k(y)\|^2}-1\right|\ =\ 0,\qquad \text{a.s}.
\label{five:eq}
\eeq

However, this follows almost trivially from the following standard strong law for  Banach space valued random variables, which, since we use it often, we state in full.

\begin{theorem}[\cite{Talagrand-Ledoux}, Corollary 7.10]
Let $X$ be a Borel random variable with values in a separable Banach space $B$ with norm $\|\cdot\|_{\text{B}}$. Let $S_n$ be the partial sum of $n$ i.i.d. realizations of $X.$ Then, $$\frac{S_n}{n}\stackrel{\text{a.s.}}{\longrightarrow} 0$$ if, and only if, $\E \{\|X\|_{\text{B}}\}<\infty$ and $\E \{X\}=0$.
\label{t1}
\end{theorem}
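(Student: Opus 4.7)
The plan is to prove both directions of the equivalence by reducing the Banach-valued statement to the classical scalar Kolmogorov strong law, exploiting separability of $B$ to approximate $X$ by finitely-valued random variables. For the ``only if'' direction, I would first use the identity $X_n/n = S_n/n - \tfrac{n-1}{n}\cdot S_{n-1}/(n-1)$ to deduce that $\|X_n\|_B/n \to 0$ a.s. whenever $S_n/n \to 0$ a.s. Since the $X_n$ are i.i.d., the second Borel--Cantelli lemma forces $\sum_n \P\{\|X\|_B > n\} < \infty$, which by the standard integral test is equivalent to $\E\{\|X\|_B\} < \infty$. For the mean-zero condition, pick any $\phi \in B^*$; then the scalar sequence $\phi(S_n)/n$ converges a.s.\ to $0$ by continuity of $\phi$, and the classical scalar SLLN applied to the integrable i.i.d.\ sequence $\phi(X_1),\phi(X_2),\ldots$ forces $\E\{\phi(X)\} = \phi(\E\{X\}) = 0$. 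Since $\phi$ is arbitrary and $B^*$ separates points, the Hahn--Banach theorem gives $\E\{X\} = 0$.

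The harder ``if'' direction will use a truncation--approximation argument. Since $B$ is separable and $X$ is Bochner integrable, for each $\epsilon > 0$ I can approximate $X$ in $L^1(B)$ by a simple function $Y^\epsilon = \sum_{j=1}^N b_j \indic_{A_j}$ with $b_j \in B$ and $\E\{\|X - Y^\epsilon\|_B\} < \epsilon$ — this is the content of Bochner's definition of integrability in a separable Banach space. Setting $\tilde Y^\epsilon \definedas Y^\epsilon - \E\{Y^\epsilon\}$ and $Z^\epsilon \definedas X - \tilde Y^\epsilon$, one has $\E\{Z^\epsilon\} = 0$ (using $\E\{X\} = 0$), while
\[
\E\{\|Z^\epsilon\|_B\} \leq \E\{\|X - Y^\epsilon\|_B\} + \|\E\{Y^\epsilon - X\}\|_B \leq 2\epsilon.
\]
The random variable $\tilde Y^\epsilon$ takes values in the finite-dimensional subspace $\text{span}(b_1,\ldots,b_N)$, so applying the ordinary scalar SLLN coordinate-by-coordinate in any basis of that subspace yields $\tfrac{1}{n}\sum_{i=1}^n \tilde Y^\epsilon_i \to 0$ a.s.

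For the residual, I would combine the triangle inequality with the scalar SLLN applied to the i.i.d.\ integrable real variables $\|Z^\epsilon_i\|_B$, to obtain
\[
\limsup_{n \to \infty}\left\|\frac{1}{n}\sum_{i=1}^n Z^\epsilon_i\right\|_B \leq \lim_{n \to \infty}\frac{1}{n}\sum_{i=1}^n \|Z^\epsilon_i\|_B = \E\{\|Z^\epsilon\|_B\} \leq 2\epsilon
\]
almost surely. Writing $S_n = \sum_i \tilde Y^\epsilon_i + \sum_i Z^\epsilon_i$ and combining the two displays gives $\limsup_n \|S_n/n\|_B \leq 2\epsilon$ off a null set $N_\epsilon$. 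Taking $\epsilon = 1/k$ and unioning the countably many null sets produces a single null set outside of which $S_n/n \to 0$, which is the desired a.s.\ convergence.

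The main obstacle is the approximation step in the ``if'' direction: one must know that a Bochner-integrable $B$-valued $X$ really does admit simple-function approximations arbitrarily close in $L^1(B)$-norm. This is where separability of $B$ is used in an essential way — without it, the range of $X$ might fail to be contained in any separable subspace for which such approximations exist, and the reduction to the finite-dimensional scalar SLLN would break down. Everything else is bookkeeping around the classical Kolmogorov theorem.
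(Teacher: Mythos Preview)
The paper does not prove this theorem; it is quoted verbatim from Ledoux--Talagrand (Corollary 7.10) and used as a black box in the proofs of Lemmas \ref{firstterm} and \ref{thirdterm}. So there is no ``paper's own proof'' to compare against.

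That said, your argument is essentially the classical proof of Mourier's strong law, and it is correct in outline. One technical point deserves a sentence: when you pass from the single approximant $Y^\epsilon$ to the sequence $\tilde Y^\epsilon_i$, $Z^\epsilon_i$, you are implicitly assuming that $Y^\epsilon$ is a Borel function of $X$ (i.e.\ $\sigma(X)$-measurable), so that $Y^\epsilon_i \definedas g(X_i)$ defines an i.i.d.\ sequence. A generic $L^1(B)$ simple-function approximant need not be $\sigma(X)$-measurable. This is easily arranged --- for instance, build $Y^\epsilon$ from indicators of Borel subsets of $B$ pulled back through $X$, using a finite net from a countable dense set in $B$ --- but it should be said explicitly, since the i.i.d.\ structure of the $Z^\epsilon_i$ is what lets you invoke the scalar SLLN on $\|Z^\epsilon_i\|_B$ in the key residual estimate.
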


To prove \eqref{five:eq}, we set  $X=f^2(y)-1$ in the above theorem. The Banach space $B$ is $C(M)$ (continuous functions over $M$), equipped with the sup norm. The mean zero condition is trivial. To check the moment condition on the norm of $f^2-1$, note that
\beqq
\E \left\{\sup_{y\in M}|f^2(y)-1|\right\} &\leq& 1+\E \left\{\sup_{y\in M}|f^2(y)|\right\}
\\ &\leq& 1+\E \left\{\left(\sup_{y\in M}|f(y)|\right)^2\right\}<\infty.
\eeqq
Finiteness of the expectation here follows from the Borell-Tsirelson-Ibragimov-Sudakov inequality (e.g.\ Theorem 2.1.2 in \cite{Adler}).  This is all that is needed to prove \eqref{five:eq}.


\section{Proof of Lemma \ref{thirdterm}}
\label{LemPf:sec}
Before starting this proof in earnest, we need  to check that all the terms that are implicitly assumed to exist in the statement of the lemma are well defined. In particular, we need to consider the limits
\beq
\label{limdefined:eq}
\lim_{y\to x} f^x(y) \ = \ \lim_{y\to x} \frac{f(y)-\E \left\{f(y)\,\big| \, f(x),\nabla f(x)\right\}}{1-\C(x,y)},
\eeq
the problem being that both numerator and denominator tend to zero in the limit. If \eqref{limdefined:eq} is not well defined, then the supremum in the lemma makes no sense. (Note that away from the diagonal in $M\times M$ there is no problem with either boundedness or continuity, due to the assumed smoothness of $f$.)

\subsection{The limit \eqref{limdefined:eq} is well defined}
\label{limit:subsec}

The proof is basically an application of  L'H\^opital's rule. To start, we take an orthonormal frame field 
$X=\{X_1,\dots,X_m\}$  for the tangent bundle of $M$, where orthonormality is in the induced metric $g$ of \eqref{indmet} and with the conventions described in Section \ref{sec:notation}.

Then standard computations for this situation (cf.\ Section 12.2.2 of \cite{Adler} for precisely this case)  give that the vector $(f(y),f(x),\nabla f(x))$ has a mean zero, multivariate Gaussian distribution with covariance matrix
\beqq
\left[
\begin{matrix}
1 & \C(x,y) & \nabla_x\C(x,y) \\
\C(x,y)  & 1 & 0\\
\nabla_x \C(x,y)  & 0 & 1
\end{matrix}
\right].
\eeqq
From this and the definition of Gaussian conditional expectations, it immediately follows that   
\begin{equation}
f^x(y) = f(x)+\frac{f(y)-f(x)}{1-\C(x,y)}-\sum_{i=1}^m\frac{ {X}_i f(x)\,{X}_i \C(x,y)}{1-\C(x,y)}.
\label{eq:fxyrep}
\end{equation}


Now take any $X=\sum_{i=1}^m d_i X_i \in T_xM$, and let $c$ be a $C^2$ curve in $M$ such that
$$c\: (-\delta,\delta)\rightarrow M, \quad c(0)=x,\quad  \dot{c}(0)=X.$$ 
As $y\to x$ along this curve, we  have
\beq
\label{limita:eq}\qquad 
\lim_{y\rightarrow x}f^x(y)=\lim_{u\rightarrow 0}\left[f(x)-\frac{\left(\frac{ {d}f(c(u))}{{d}u}-\sum_{i=1}^m X_i f(x)\frac{ {d}X_i \C(x,c(u))}{ {d}u}\right)}{\frac{ {d}\C(x,c(u))}{ {d}u}}\right].
\eeq

Consider the limit of the ratio in the above expression, this being the only problematic term. This is
\begin{equation}
\frac{(\sum_i d_iX_i)f(x)-\sum_i(X_i f(x)(\sum_j d_j(X_jX_i)\C(x,x))}{(\sum_i d_iX_i)\C(x,x)}.
\label{li}
\end{equation}
Note that because of our choice of  Riemannian metric, and the fact that the $X_i$ were chosen to be orthonormal, we have
$$X_jX_i \C(x,x)\ =\ g(X_i,X_j) \ = \  \delta_{ij},$$
the Kronecker delta. Therefore the numerator in $(\ref{li})$ is zero. The denominator is also zero because of the assumption of constant variance on $f$. Thus, to find the true limit, another application of   L'H\^opital's rule is necessary, and so we have
$$ 
\lim_{y\rightarrow x} f^x(y)=f(x)-\lim_{u\rightarrow 0}\left(\frac{\frac{ {d}^2f(c(u))}{ {d}u^2}-\sum_iX_if(x)\frac{ {d}^2X_i\C(x,c(u))}{ {d}u^2}}{\frac{ {d}^2\C(x,c(u))}{ {d}u^2}}\right).
$$
It is easy to see that
$$\lim_{u\rightarrow 0}\frac{ {d}^2f(c(u))}{ {d}u^2}=\nabla^2f(x)(X,X)+\nabla_{X}Xf(x),
$$
and
\beqq
\lim_{u\rightarrow 0}\frac{ {d}^2X_i \C(x,c(u))}{ {d}u^2}& =& XXX_i\C(x,x)\\
&=& \E \{XXf\,X_if\}  \\
&=& \E \{\nabla_XXf\,X_if\} \\
&=&g(\nabla_XX,X_i),
\eeqq
where in the second-last equality, we have used calculations from \cite{Adler} (cf. Eqn. (12.2.14)).
Consequently, 
$$ \lim_{u\rightarrow 0}\sum_iX_if(x)\frac{ {d}^2X_i \C(x,c(u))}{ {d}u^2}\ =\ \sum_ig(\nabla_XX,X_i)X_if(x)\ =\ 
\nabla_XXf(x),
$$
and so, moving to the notation of 2-forms, the limit in \eqref{limita:eq} is given by the well defined expression
$$
f(x)-\frac{\nabla^2f(x)(X,X)}{\nabla^2 \C(x,x)(X,X)},
$$ 
and the limit in \eqref{limdefined:eq}, albeit dependent on the path of approach of $y$ to $x$, is also well defined. As a consequence,  we also have that, for each finite $k$, 
$$\sup_{(x,y)\in\tM}\frac{\|f^{x,k}(y)\|^2}{k}$$
 is a.s.\ finite.

\subsection{Completing  the proof} 
We now turn to the proof of the lemma, establishing \eqref{thirdterm}.

This, however, follows exactly along the lines of the proof of Lemma \ref{firstterm}, again applying Theorem \ref{t1}. We need only take as our Banach space $C_b(\tM)$, the bounded, continuous functions on $\tM$ with supremum norm, and as our basic random variable $X=(f^{x}(y))^2-\text{Var}\left(f^x(y)\right)$.

The previous subsection establishes the a.s.\  boundedness of $X$ needed to make the argument work.

%
%
%

\section{Proof of Lemma \ref{secondterm}}
\label{sec:lemma3}

Lemma \ref{secondterm} involves showing that the ratio
\beqq
\frac{1-\C(x,y)}{1-\widehat{\C}_k(x,y)}
\eeqq
converges, uniformly,  to one, as $k\to\infty$. For given $x\neq y$, this is straightforward, following from a strong law of large numbers, much as in the previous two proofs. However, as $x\to y$, even for fixed $k$,  there is no easy way to 
find a uniform bound on the ratio, since both numerator and denominator tend to zero.

\subsection{Outline of the proof}
\label{sec:outhard}
We start by writing $\wC$ as a sum of three terms:
$$
\widehat{\C}_k(x,y)\ =\ \C(x,y)+\text{Bias}(\widehat{\C}_k (x,y))+\xi_k(x,y),
$$
where $\xi_k(x,y)$ is mean zero, random error with variance 
$\text{Var}(\widehat{\C}_k)$, and the deterministic bias term is $\E\{\wC_k -\C\}$.

We shall show in Appendix 2 that 
\beq
\text{Bias}(\widehat{\C}_k(x,y))&=&\frac{-\C(x,y)(1-\C^2(x,y))}{2{(k+1)}}+O\left(\frac{1}{k^2}\right),   \label{bias:eq}
\\ \text{Var}(\widehat{\C}_k(x,y))&=&\frac{(1-\C^2(x,y))^2}{k}+O\left(\frac{1}{k^2}\right).
\label{variance:eq}
\eeq
and that  the remainder terms  in both expressions are uniformly bounded over  $ M\times M$. (In fact, this is almost classical, in that
expressions for the bias and variance of the correlation coefficient estimator centered around the sample means (as opposed to $\widehat{\C}$, which is centered at zero) are well known in the Statistics
literature, dating back, at least, to \cite{Kendall}[{Chapter 16, see (16.73) and (16.74)}]. Appendix 2 treats the zero-centered $\widehat{\C}$ case.)

For notational convenience, set 
\beq
\label{tzeta:eq}
 \tzeta_k(x,y)\  \definedas \ \sqrt{k}\left(\widehat{\C}_k(x,y)-\C(x,y)\right)
\eeq
Since the notation is getting long, from now on,  we interchangeably use $a_k(x,y)$ and $a_k^{xy}$ for a function $a_k$ of $x$ and $y$, refrain from writing out explicitly the summation indices and their range in some situations where they are obvious, and also introduce
\beqq
\beta_k^{xy} \ \definedas \ \text{Bias} (\wC_k(x,y)).
\eeqq
Then, in view of \eqref{bias:eq}, and up to a term of $O(k^{-2})$ in the denominator,  we have
\beq
\label{long:eq}
&&\left(\frac{1-\C(x,y)}{1-\widehat{\C}_k(x,y)}\right)
\\ && \qquad\qquad= \
\frac{1-\C^{xy}}{1-\C^{xy}+\frac{\C^{xy}(1-(\C^{xy})^2)}{2{(k+1})}-\sqrt{k}\frac{\left(\widehat{\C}_k^{xy}-\C^{xy}-\beta_k^{xy}\right)}{1-(\C^{xy})^2}\frac{1-(\C^{xy})^2}{\sqrt{k}}}.  \notag
\eeq
Cancelling $(1-\C^{xy})$ from numerator and denominator,  this becomes

\beq
\label{pretoprove}
\left({1+\frac{\C^{xy}(1+\C^{xy})}{2{(k+1)}}-\sqrt{k}\frac{\left(\widehat{\C}_k^{xy}-\C^{xy}-\beta_k^{xy})\right)}{1-(\C^{xy})^2}\frac{1+\C^{xy}}{\sqrt{k}}}\right)^{-1}.
\eeq
The only problematic term here is 
\begin{equation}
\sqrt{k}\frac{\left(\widehat{\C}_k(x,y)-\C(x,y)-\beta_k(x,y))\right)}{1-\C^2(x,y)},
\label{toprove}
\end{equation}
since the second term converges deterministically to zero, and the final multiplicative factor is bounded by $2/\sqrt{k}$.  We shall prove that the sequence of random processes defined by  \eqref{toprove}  converges weakly to a continuous  Gaussian process on $\tM$. This, the extra divisor of $\sqrt{k}$ in \eqref{pretoprove}, and some elementary probability arguments which we leave to the reader, will be enough to prove Lemma  \ref{secondterm}.

In fact, in view of \eqref{bias:eq}, we can drop the bias term from \eqref{toprove}, and suffice with the weak convergence, over $\tM$,
 of the processes
\beq
  \zeta_k(x,y)   \ \definedas \   \frac{ \tzeta_k(x,y)}{1-\C^2(x,y)}  \ = \  \sqrt{k}
  \frac{\widehat{\C}_k(x,y)-\C(x,y)}{1-\C^2(x,y)}   .
 \label{zetak:eq}
\eeq
We shall prove this in a number of stages.

To start, we show the weak convergence of the numerator in \eqref{zetak:eq} -- $\tzeta_k$ -- which is much less delicate than that of the ratio $\zeta_k$, there being no $0/0$ issues. The convergence of the finite dimensional distributions is shown in 
the following Section \ref{wkzeta:sec} and the tightness in \ref{tight:sec}. The final step is to apply the continuous mapping theorem, (e.g.\ \cite{Billingsley}, Section 1.5) for which we need to know that the mapping between function spaces that takes
\beq
\label{mapping:eq}
\phi(x,y)\ \to \ \frac{\phi(x,y)}{1-\C^2(x,y)}  
 \eeq 
is continuous, with probability one, for $\tzeta$, the process on $M\times M$ which is the limit of the $\tzeta_k$. We have already seen that ratios like that on the right hand side here are problematic, and computable, at the $y\to x$ limit, only via L'H\^opital's rule. Consequently, the weak convergence of the $\tzeta_k$ is going to have to be in a function space with a norm that takes into 
account convergence of derivatives as well as the function values. This is going to make the tightness argument rather intricate, which is why Section \ref{tight:sec} is the longest in the paper.  The continuous mapping argument will be given at the end, in the brief Section \ref{cmt:sec}.

\subsection{Fi-di convergence of $\tzeta_k$, and characterising the limit}
\label{wkzeta:sec}

The main result of this section is the following.

\begin{lemma}
The finite-dimensional distributions of $ \tzeta_k$, on $M\times M$,  converge to those of the zero mean, 
 Gaussian process, $\tzeta$, with covariance function given by
\beq \notag
\E \{ \tzeta(x_0,y_0) \tzeta(x,y)\}
&=&
\smallhalf {\C^{x_0 y_0}\C^{xy}} \left[(\C^{y_0 x})^2+(\C^{y_0 y})^2+(\C^{x_0 x})^2+(\C^{x_0 y})^2\right]\\
&&\quad
+\C^{xy_0}\left[\C^{x_0 y}-\C^{x_0 x}\C^{xy}\right]+\C^{y_0 y}\left[\C^{x_0 x}-\C^{x_0 y}\C^{xy}\right]
\notag 
\\
&&\qquad -\C^{x_0 y_0}\left[\C^{x_0 x}\C^{x_0 y}+\C^{y_0 y}\C^{xy_0}\right]. 
\label{correlationfunction}
\eeq
\label{fidi:lemma}
\end{lemma}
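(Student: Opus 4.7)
I would set $U_k(x,y) \definedas \frac{1}{k}\sum_{j=1}^k f_j(x)f_j(y)$ and $V_k(x)\definedas\frac{1}{k}\sum_{j=1}^k f_j^2(x)$, so that $\widehat{\C}_k(x,y) = g\bigl(U_k(x,y),V_k(x),V_k(y)\bigr)$ for the smooth map $g(a,b,c)=a/\sqrt{bc}$, with $\E\{U_k(x,y)\}=\C(x,y)$ and $\E\{V_k(x)\}=1$. The plan is (i) to establish a multivariate CLT for these averages at any finite collection of points, exploiting the i.i.d.\ structure over $j=1,\dots,k$; (ii) to transfer the limit through to $\widehat{\C}_k$ by the multivariate delta method applied to $g$; and (iii) to read off the limiting covariance by expanding the resulting quadratic form, all of whose ingredients can be computed in closed form via Isserlis' theorem for Gaussian fourth moments.

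\textbf{CLT plus delta method.} Fix pairs $(x_1,y_1),\dots,(x_N,y_N)\in M\times M$ and consider the $3N$-dimensional i.i.d.\ random vectors $Z_j \definedas \bigl(f_j(x_\ell)f_j(y_\ell),\,f_j^2(x_\ell),\,f_j^2(y_\ell)\bigr)_{\ell=1}^N$. These are quadratic polynomials in Gaussians, hence have finite second moment, so the standard multivariate CLT produces a centered Gaussian limit for $\sqrt{k}\bigl(\tfrac{1}{k}\sum_j Z_j - \E\{Z_1\}\bigr)$ whose covariance entries are polynomials in the values of $\C$ supplied by Isserlis' pairing formula. The componentwise map $g\times\cdots\times g$ is $C^\infty$ at each base point $(\C(x_\ell,y_\ell),1,1)$, with gradient there equal to $(1,-\C(x_\ell,y_\ell)/2,-\C(x_\ell,y_\ell)/2)$, so the multivariate delta method yields
\[
 \tzeta_k(x,y) \ \Rightarrow\  \tzeta(x,y) \ \definedas\   \xi(x,y) \,-\, \tfrac{1}{2}\C(x,y)\bigl[\eta(x)+\eta(y)\bigr]
\]
jointly over the $N$ chosen pairs, where $\xi$ and $\eta$ denote the (centered, Gaussian) fi-di limits of $\sqrt{k}(U_k-\C)$ and $\sqrt{k}(V_k-1)$ respectively.

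\textbf{Reading off the covariance.} Isserlis' formula immediately supplies the three elementary second-moment blocks
\[
 \E\{\xi(x_0,y_0)\xi(x,y)\}=\C^{x_0 x}\C^{y_0 y}+\C^{x_0 y}\C^{y_0 x}, \quad \E\{\eta(x)\eta(y)\}=2(\C^{xy})^2,
\]
together with $\E\{\xi(x_0,y_0)\eta(x)\}=2\C^{x_0 x}\C^{y_0 x}$. Substituting into $\E\{\tzeta(x_0,y_0)\tzeta(x,y)\}$ produces a nine-term expansion which, after applying the symmetry $\C^{yx}=\C^{xy}$, regroups into exactly the four summands displayed in \eqref{correlationfunction}. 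There is no deep analytic obstacle in this lemma: everything rests on the classical CLT/delta-method package, which is perfectly suited to pointwise (fi-di) assertions, and the Gaussianness of $f$ ensures finiteness of all moments used. The only care required is in the final bookkeeping of cross-terms, since each of the four points $x_0,y_0,x,y$ appears once inside a $\xi$ and once inside an $\eta$, so one must be systematic to see the nine terms regroup into the author's four-term form. The tightness needed to upgrade fi-di to weak convergence in a stronger topology is a separate matter, handled in Section~\ref{tight:sec}.
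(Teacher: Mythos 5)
Your proposal is correct and follows essentially the same route as the paper: a multivariate CLT for the vector of sample second moments $\bigl(\tfrac1k\sum_j f_j(x)f_j(y),\tfrac1k\sum_j f_j^2(x),\tfrac1k\sum_j f_j^2(y)\bigr)$ followed by the delta method applied to $g(a,b,c)=a/\sqrt{bc}$, which is exactly what the paper does via Anderson's Theorems 3.4.4 and 4.2.3. The only difference is that you carry out the limiting covariance bookkeeping explicitly with Isserlis' formula (and your blocks and gradient $(1,-\C/2,-\C/2)$ do reproduce \eqref{correlationfunction}), whereas the paper defers that tedious calculation to the statistical literature.
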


The proof will rely on the following result of Anderson.

\begin{theorem}[\cite{Anderson}, Theorem 4.2.3]
Let $\{U(k)\}$ be a sequence of $d$-component random vectors and $b$ a fixed vector such that $\sqrt{k}(U(k)-b)$ has the limiting distribution $\mathcal{N}(0,T)$ as $k\rightarrow\infty$. Let $g(u)$ be a vector-valued function of $u$ such that each component $g_j(u)$ has a nonzero differential at $u=b$, and let $\psi_b$ be the matrix with  $(i,j)$-th component $({\partial g_j(u)}/{\partial u_i})|_{u=b}$. Then $\sqrt{k}(g(u(k)-g(b))$ has the limiting distribution $\mathcal{N}(0,\psi_b^\prime T\psi_b)$.
\label{Anderson:thm}
\end{theorem}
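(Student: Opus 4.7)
The plan is to invoke the classical delta-method argument: a first-order Taylor expansion of $g$ around $b$, combined with Slutsky's theorem and the continuous mapping theorem. Since each component $g_j$ is only assumed to admit a differential at the single point $b$, the bookkeeping for the remainder needs some care, but no additional smoothness of $g$ will be required.

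First, from the definition of differentiability at $b$, we can write, for all $u$ in some neighbourhood of $b$,
$$g(u) - g(b) \;=\; \psi_b'(u-b) + R(u), \qquad \|R(u)\| \le \epsilon(u)\,\|u-b\|,$$
where $\epsilon(u) \to 0$ as $u \to b$ (setting $\epsilon(b)=0$). Next, I would observe that the hypothesis $\sqrt{k}(U(k)-b) \Rightarrow \mathcal{N}(0,T)$ forces $U(k) \to b$ in probability (divide by $\sqrt{k}$ and apply Slutsky), and hence $\epsilon(U(k)) \to 0$ in probability by the continuous mapping theorem. Substituting $u=U(k)$ in the Taylor expansion and multiplying by $\sqrt{k}$,
$$\sqrt{k}\bigl(g(U(k)) - g(b)\bigr) \;=\; \psi_b'\bigl(\sqrt{k}(U(k)-b)\bigr) + \sqrt{k}\,R(U(k)).$$
The first term on the right converges in distribution to $\psi_b'\,\mathcal{N}(0,T) = \mathcal{N}(0,\psi_b' T \psi_b)$ by applying the continuous mapping theorem to the linear map $v \mapsto \psi_b'v$. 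For the error term, the bound $\sqrt{k}\,\|R(U(k))\| \le \epsilon(U(k))\cdot \sqrt{k}\,\|U(k)-b\|$ exhibits it as a product of a factor tending to zero in probability and a factor that is tight (since $\sqrt{k}(U(k)-b)$ converges weakly, hence is tight); Slutsky's theorem then gives $\sqrt{k}\,R(U(k)) \to 0$ in probability. One more application of Slutsky combines the two pieces into the stated weak limit.

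The only real obstacle is this remainder analysis. Because differentiability is assumed only at the single point $b$, there is no uniform (Lipschitz or $C^1$) control on $g$ away from $b$, and so the argument crucially pairs the in-probability convergence $U(k)\to b$ (which activates the ``$o$''-bound on $R$) with the weak-convergence tightness of $\sqrt{k}(U(k)-b)$ (which absorbs the $\sqrt{k}$ amplification). This interplay is precisely the heart of the delta method; once it is in place, the rest is bookkeeping, and the linearity of $v \mapsto \psi_b'v$ automatically transports a zero-mean Gaussian with covariance $T$ to a zero-mean Gaussian with covariance $\psi_b' T \psi_b$.
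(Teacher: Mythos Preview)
Your argument is the standard delta-method proof and is correct. Note, however, that the paper does not supply its own proof of this statement: it is quoted verbatim as Theorem 4.2.3 of Anderson's textbook and used as a black box in the proof of Lemma~\ref{fidi:lemma}. So there is no ``paper's proof'' to compare against; your write-up simply fills in the classical argument that the cited reference contains. One minor remark: the hypothesis that each $g_j$ has a \emph{nonzero} differential at $b$ is never used in your proof, and indeed is not needed for the conclusion as stated (it only affects whether the limiting covariance $\psi_b' T\psi_b$ is degenerate).
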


{\smc Proof of Lemma \ref{fidi:lemma}} \ \ 
As one might guess from the complicated form of \eqref{correlationfunction} the  calculations involved are somewhat tedious,
 and so we shall concentrate on making the main steps clear. 
 Towards that end, we introduce the following notation just for this proof. 
 For any $i,j\in\mathbb{N}$, and points $(x_i,y_j)\in M\times M$, define
$$    \bar{C}^{x_i ,y_j}_{11}(k)=\| f^k(x_i)\|^2,\,\,    \bar{C}^{x_i, y_j}_{22}(k)=\| f^k(y_j)\|^2,\,\,    \bar{C}^{x_i ,y_j}_{12}(k)=\sum_{\ell=1}^k f_\ell(x_i)f_\ell(y_j).$$ 
Now define
\beqq
U(k) &=& \frac{1}{k}  
\left(     \bar{C}_{11}^{x_1,y_1},   
    \bar{C}_{22}^{x_1,y_1},   
    \bar{C}_{12}^{x_1,y_1},  
\dots,   
    \bar{C}_{11}^{x_n,y_n},   
    \bar{C}_{22}^{x_n,y_n},   
    \bar{C}_{12}^{x_n,y_n}\right), \\
b &=& \left( 1, 1,    
  \C^{x_1,y_1},      
\dots,         
1,     
1,      
  \C^{x_n,y_n}  \right).
\eeqq
Thus, the elements of $U$ are the maximum likelihood  estimators of the corresponding elements of $b$. It then follows from standard estimation theory (e.g.\ \cite{Anderson},   Theorem 3.4.4)  that $\sqrt{k}(U(k)-b)$ has a limiting normal distribution with mean $0$ and some  covariance matrix $T$, the specific structure of which does not concern us at the moment.

%

In order to prove the lemma, we require the asymptotic distribution of 
\beq
\label{conv:rob1}
\left\{\sqrt{k}(\widehat{\C}_k(x_i,y_i)-\C(x_i,y_i))\right\}_{i=1}^n.
\eeq
However, using the vector $U$ above it is easy to relate the $\wC$s to the $\bar {C}$s, and if we now 
define a function $g\:\mathbb{R}^{3n}\rightarrow\mathbb{R}^n$ by
$$g(u_1,u_2,\cdots,u_{3n})=\left(\frac{u_3}{\sqrt{u_1 u_2}},\frac{u_6}{\sqrt{u_4 u_5}},\cdots, \frac{u_{3n}}{\sqrt{u_{3n-1} u_{3n-2}}}\right),$$ 
then Theorem \ref{Anderson:thm} establishes the claimed convergence of finite dimensional distributions, and so proves the lemma, modulo two issues.

The first is the condition on the differential required by Theorem \ref{Anderson:thm}, but this is trivial. The second is to derive the exact form \eqref{correlationfunction} of the limiting covariances, which, while not intrinsically hard, is a long and tedious calculation. The calculation starts 
by writing out the covariance function for   $\wC$ and computing moments, all of which involve Gaussian variables. Fortunately, most of the detailed calculations that we need were carried out long ago  in the statistical literature and, can be found, for example,  in \cite{Kendall3}[e.g.\ Chapter 41, Example 41.6]. What remains is to send $k\to\infty$ in these expressions, and deduce \eqref{correlationfunction}. We shall not go through the tedious details here.
\qed

\subsection{Tightness  of $ \tzeta_k$}
\label{tight:sec}

For the reasons alluded to above and exploited below, we shall prove tightness in  the Banach space of twice continuously differentiable functions on $M\times M$, which we denote by $\Btwo$, equipped with the norm 
\beq
\label{Bnorm:defn}  
\qquad \ 
\|f\|_\Btwo  \definedas   \max\left\{ \|f\|_{\infty},\, \|\nabla f\|_{\infty},\, \|\nabla^2 f\|_{\infty}\right\},
\eeq
where the norms of the first and second order derivatives are obtained by taking maximum over the norms of the $2m$ and $4m^2$ components of the Riemannian differential and Hessian, respectively. 

To break the rather long proof of tightness  into bite sized pieces, we write
\beq
\tzeta_k(x,y) \ = \  \alpha_k(x,y)\, +\, \Delta_k(x,y), 
\label{pieces:eq}
\eeq
where 
\beqq
\alpha_k(x,y)\ \definedas \
\sqrt{k}\left(\frac{\frac{1}{k}\sum_{j=1}^k (f_j(x) f_j(y)\, - \, \C (x,y))}{\sqrt{\frac{\sum (f_j(x))^2}{k}}\sqrt{\frac{\sum (f_j(y))^2}{k}}}\right)
\eeqq
and
\beqq
\Delta_k(x,y) \ \definedas \ \sqrt{k}\left(1-\sqrt{\frac{\sum (f_j(x))^2}{k}}\sqrt{\frac{\sum (f_j(y))^2}{k}}\right)
\frac{ \C(x,y)}{\sqrt{\frac{\sum (f_j(x))^2}{k}}\sqrt{\frac{\sum (f_j(y))^2}{k}}}.
\eeqq

In the following two subsections we shall prove that the sequences $\alpha_k$ and $\Delta_k$ converge weakly in $\Btwo$, from which the convergence of $\tzeta_k$ immediately follows.

\subsubsection{$\alpha_k$ converges weakly in $\Btwo$}
\label{Sec:alpha}
We start with something even simpler than $\alpha_k$, viz.\ the sequence of random functions $\eta_k$ defined by
\begin{equation}
\eta_k(x,y) \ = \ \sqrt{k}\left(\frac{1}{k}\sum f_j(x) f_j(y)\, -\, \C(x,y)\right).
\label{know}
\end{equation}
To prove the weak convergence of this sequence, we use the theorem stated as part of Example 1.5.10 in \cite{van}[p41] (also see the discussion after the statement of the theorem). 

To this end, note that the summands in \eqref{know} are
 i.i.d.\ copies of the random function $f\otimes f\:M\times M\to\real$
defined by $(f\otimes f) (x,y) = f(x)f(y)$. If we endow $M\times M$ with the topology induced by the Riemannian distance $d_{M\times M}$ (this is the metric we use in place of the semi-metric in the theorem from \cite{van}), then  $M\times M$ is compact in this topology. Since the  convergence of the finite dimensional distributions of (\ref{know}) follows from  Theorem \ref{Anderson:thm}, all that is left to check for weak convergence
of   \eqref{know} is  tightness.

In order to show tightness, we first need to set up some notation, in particular  for Taylor expansions on
$M\times M$,   in terms of Riemannian normal coordinates.

Consider normal neighbourhoods $U_1,U_2$ (local coordinates $(x^i),(y^i),$ respectively) around $x_0$ and $y_0$ in $M$, and take $U_1\times U_2$ around $(x_0,y_0)$ in $M\times M$. Then, $(x^i:y^i)$ give us the following definition of coordinates in the product space $U_1\times U_2$:
\beqq
(x^1,\dots, x^m:y^1,\dots, y^m)(x_0,y_0)\ = \
\left[(x^1, \dots, x^m)(x_0):(y^1,\dots, y^m)(y_0)\right].
\eeqq
 Since $$T_{(x_0,y_0)}(U_1\times U_2)=T_{x_0}U_1\oplus T_{y_0}U_2,$$any tangent vector $v$ in the product tangent space splits uniquely as the sum of $v_1 \in T_{x_0}U_1$ and $v_2 \in T_{y_0}U_2$. This further gives us the following definition for the exponential map in $M\times M$:
$$\exp_{(x_0,y_0)}^{M\times M}(v)=(\exp_{x_0}^M(v_1),\exp_{y_0}^M(v_2)).$$ 
Let the coordinate basis vectors for the tangent spaces be denoted by $\left( \frac{\partial}{\partial x^i}\right)$ and $\left( \frac{\partial}{\partial y^i}\right)$,  considered as row vectors. The concatenation of the two serves as a basis for the product tangent space, and so any vector $v$ in this space can be written as
\beqq
v\ = \ v_1\oplus v_2 \ = \ 
\sum_{i=1}^mv^i\frac{\partial}{\partial x^i}+\sum_{i=1}^{m}v^{i+m}\frac{\partial}{\partial y^i}.
\eeqq
%
%
%

This allows us to write the following Taylor expansion for $\C(x,y)$ about $(x_0,y_0)$:
\beqq
\C(x,y) &=& \C(x_0,y_0)\, +\, v\left[\left(\frac{\partial \C(x,y)}{\partial x^1},\dots,\frac{\partial \C(x,y)}{\partial y^m}\right)|_{(x_0,y_0)}\right]^T  \\
&& \qquad\qquad\qquad +\
\smallhalf v\left[ \frac{\partial^2\C(x,y)}{\partial^k x_i \partial^l y_j}|_{(x_0,y_0)}\right]
v^T
 \ +\ O(\|v\|^3),
\eeqq
where $k+l=2$. Finally, we recall a few important facts from the topic of normal coordinates and geodesics (cf. \cite{Lee}) that the geodesic starting from $(x_0,y_0)$ in the direction $v$ is given in Riemannian normal coordinates by $t(v^1\cdots v^{2m})$, geodesics are locally minimizing, and so along with the previous fact, we have $\|v\|^2=d^2_{M\times M}((x,y),(x_0,y_0))$. Also, importantly, since Christoffel symbols vanish at the centers of the normal charts, covariant derivatives at the centers reduce to  usual partial derivatives. Therefore, working with normal coordinates is useful in local calculations.

Returning now to the issue of tightness, we need to establish moment bounds on the second derivatives of the processes $\eta_k$ of 
\eqref{know}.
Clearly, the variance {and} correlation function of $\eta_k$, are, respectively, the variance of $f(x)f(y)-\C(x,y)$ and the correlation 
$$E\{(f(x)f(y)-\C(x,y))(f(x_0)f(y_0)-\C(x_0,y_0))\}.$$

To investigate second derivatives, it is useful to move to the notation of 2-forms. Doing so, it follows from simple algebra that   the diagonal elements of the Hessian matrix of this process are given by
\beqq
&&\bigg\{\nabla^2 f(x)\left(\frac{\partial}{\partial x^i},\frac{\partial}{\partial x^i}\right) f(y)-\nabla^2 \C^{xy}\left(\frac{\partial}{\partial x^i},\frac{\partial}{\partial x^i}\right),  \\
&&\qquad \quad \nabla^2 f(y)\bigg(\frac{\partial}{\partial y^j},\frac{\partial}{\partial y^j}\bigg) f(x)-\nabla^2 \C^{xy}\left(\frac{\partial}{\partial y^j},\frac{\partial}{\partial y^j}\right)\bigg\}, \quad {1\leq i,j\leq m},
\end{eqnarray*}
with other elements in the upper triangular portion falling into one of the three groups 
$$\nabla^2 f(x)\left(\frac{\partial}{\partial x^i},\frac{\partial}{\partial x^j}\right) f(y)-\nabla^2 \C^{xy}\left(\frac{\partial}{\partial x^i},\frac{\partial}{\partial x^j}\right),\qquad1\leq i<j\leq m,$$
$$\nabla^2 f(y)\left(\frac{\partial}{\partial y^i},\frac{\partial}{\partial y^j}\right) f(x)-\nabla^2 \C^{xy}\left(\frac{\partial}{\partial y^i},\frac{\partial}{\partial y^j}\right),\qquad 1\leq i<j\leq m,$$ or
$$\frac{\partial f(x)}{\partial x^i}\frac{\partial f(y)}{\partial y^j}-\nabla^2 \C^{xy}\left(\frac{\partial}{\partial x^i},\frac{\partial}{\partial y^j}\right),\qquad1\leq i\leq j\leq m.$$ 
For the sake of illustration, we focus on only one `type' of term. Computations for the other terms
are basically the same. The term we shall consider is
$$\nabla^2 f(x)\left(\frac{\partial}{\partial x^1},\frac{\partial}{\partial x^2}\right) f(y)-\nabla^2 \C^{xy}\left(\frac{\partial}{\partial x^1},\frac{\partial}{\partial x^2}\right),$$
and we now also note that the term involving the derivatives of $\C$ does not present any problem for the upper bound on the increments since $\C$ is deterministic and  the fact that $f\in C^3$ implies that $\C$ is at least $C^6$. Thus, it is enough to prove the following bound 
\begin{eqnarray*}
E\left\{\left(\nabla^2 f(x)\left(\frac{\partial}{\partial x^1},\frac{\partial}{\partial x^2}\right) f(y)-\nabla^2 f(x)\left(\frac{\partial}{\partial x^1},\frac{\partial}{\partial x^2}\right)|_{x=x_0} f(y_0)\right)^2\right\}\\
\leq K d^2_{M\times M}((x,y),(x_0,y_0))
\end{eqnarray*}
for any two pairs $(x,y), (x_0,y_0)\in M\times M$, and constant $K$. 
The expectation here is bounded  above  by
\beq
\label{upperbound}
&&\ \ 2E\left\{\left(\frac{\partial^2 f(x)}{\partial x^2\partial x^1}f(y)-\frac{\partial^2 f(x)}{\partial x^2\partial x^1}|_{x=x_0}f(y_0)\right)^2\right\}
\\
&& \quad+2E\left\{\left(\left(\nabla_{\frac{\partial}{\partial x^1}}\frac{\partial}{\partial x^2}f(x)\right) f(y)-\left(\nabla_{\frac{\partial}{\partial x^1}}\frac{\partial}{\partial x^2}f(x)\right)|_{x=x_0} f(y_0)\right)^2\right\}.
\nonumber
\eeq
As far as the first  expectation here is concerned, using Wick's formula, the fact that $f$ has unit variance, 
 and, for a differential  operator  $D$ of any order, writing   $D\C^{x_0y_0}$  for $D\C^{xy}|_{(x_0,y_0)}$, we have that it is equal to 
 \beq
&& \left[\frac{\partial^4 \C^{xx}}{\partial (x^2)^2\partial (x^1)^2}+\frac{\partial^4 \C^{x_0x_0}}{\partial (x^2)^2\partial (x^1)^2}-2\frac{\partial^4 \C^{xx_0}}{\partial (x^2)^2\partial (x^1)^2}\C^{yy_0}\right]
\label{expectation} \\
&&\qquad\qquad +\ \left[\left(\frac{\partial^2 \C^{xy}}{\partial x^2\partial x^1}\right)^2+\left(\frac{\partial^2 \C^{x_0y_0}}{\partial x^2\partial x^1}\right)^2-2\frac{\partial^2 \C^{xy_0}}{\partial x^2\partial x^1}\frac{\partial^2 \C^{x_0y}}{\partial x^2\partial x^1}\right].
\nonumber
\eeq
The important point to be checked is that terms which are $O(1)$ and $O(\|v\|)$ cancel. We check this thoroughly below.  The second order terms can be trivially bounded using the facts that $f\in C^3$  and $|v_i|\leq \|v\|.$ This technique of bounding gives the required constant $K$ independent of the points, but does not offer too much insight. Consequently, we illustrate  how  this can be done for one case only.

Consider the case 
\beqq
&&\left[\frac{\partial^3 \C^{xx_0}}{\partial (x^1)^3}|_{x=x_0}v^1
+\cdots+\frac{\partial^3 \C^{xx_0}}{\partial x^m\partial (x^1)^2}|_{x=x_0}v^m\right]\\
&&\qquad\qquad \times \left[\frac{\partial^3 \C^{yy_0}}{\partial (y^1)^3}|_{y=y_0}v^{m+1}+\cdots+\frac{\partial^3 \C^{yy_0}}{\partial y^m\partial (y^1)^2}|_{y=y_0}v^{2m}\right].
\eeqq
The above is obviously smaller than
\begin{eqnarray*}
&&\left[\ \left|\frac{\partial^3 \C^{xx_0}}{\partial (x^1)^3}|_{x=x_0}v^1\right| +
\cdots+\left|\frac{\partial^3 \C^{xx_0}}{\partial x^m\partial (x^1)^2}|_{x=x_0}v^m\right|\ \right]\\
&&\qquad\qquad\times\left[\ \left|\frac{\partial^3 \C^{yy_0}}{\partial (y^1)^3}|_{y=y_0}v^{m+1}\right|+\cdots+\left|\frac{\partial^3 \C^{yy_0}}{\partial y^m\partial (y^1)^2}|_{y=y_0}v^{2m}\right|\ \right].
\end{eqnarray*}
This immediately  yields the following upper bound, in which $M_3$ is a bound on the third order derivatives 
of $\C$:
\beqq
&& M_3^2[|v^1|+\cdots+|v^m|]\times[|v^{m+1}|+\cdots+|v^{2m}|]
\\ && \qquad\qquad \leq\  M_3^2m^2d^2_{M\times M}((x,y),(x_0,y_0))  \ = \  M_3^2m^2\|v\|^2  .
\eeqq

With the second order terms out of the way, we return to our claim that the  zeroth  and first order terms cancel out in (\ref{expectation}). Focus first on the second term in that expression. For any $(x,y)\in M\times M,$ introduce the function
$$g(x,y)\ \definedas \  \frac{\partial^2 \C^{xy}}{\partial x^2\partial x^1},$$
which, by  assumption, is at least $C^4$. Expanding this in a Taylor series about $(x_0,y_0)$, we have
\begin{equation*}
g(x,y)=g(x_0,y_0)+\sum_{i=1}^m \frac{\partial g(x,y)}{\partial x^i}|_{(x_0,y_0)}v^i+\sum_{i=1}^m \frac{\partial g(x,y)}{\partial y^i}|_{(x_0,y_0)}v^{m+i}+O(\|v\|^2).
\end{equation*}
In shorter notation, let us write the above as
\begin{equation*}
g(x,y)\ =\ g(x_0,y_0)+\sum_{i=1}^{2m}e_i v^i+O(\|v\|^2),
\end{equation*}
where the $e_i$ are the coefficients from the previous formula. Then,
\begin{equation}
\left(\frac{\partial^2 \C^{xy}}{\partial x^2\partial x^1}\right)^2+\left(\frac{\partial^2 \C^{x_0y_0}}{\partial x^2\partial x^1}\right)^2
\ =\ 
2(g(x_0,y_0))^2+2g(x_0,y_0)\sum_{i=1}^{2m}e_i v^i+O(\|v\|^2).
\label{fpart}
\end{equation}
Next, define the following smooth functions over $M$:
$$h(x)\ \definedas \ \frac{\partial^2 \C^{xy_0}}{\partial x^2\partial x^1},\qquad t(y)\ \definedas \ \frac{\partial^2 \C^{x_0y}}{\partial x^2\partial x^1}.$$
It is immediate that $$h(x)=g(x_0,y_0)+\sum_{i=1}^m e_i v^i+O(\|v\|^2),\,\,t(x)=g(x_0,y_0)+\sum_{i=m+1}^{2m}e_i v^i+O(\|v\|^2).$$
Therefore,
\begin{equation}
-2\frac{\partial^2 \C^{xy_0}}{\partial x^2\partial x^1}\frac{\partial^2 \C^{x_0y}}{\partial x^2\partial x^1}=-2[g^2(x_0,y_0)+g(x_0,y_0)\sum_{i=1}^{2m}e_iv^i]+O(\|v\|^2).
\label{spart}
\end{equation}
It is now clear that, as claimed,  at least for the second expression in  (\ref{expectation}),  the zeroth and first order  terms in the Taylor expansion cancel (cf.\ (\ref{fpart}) and (\ref{spart})).

Turning now to the first term in (\ref{expectation}), define a function $w\:\text{diag}(M\times M)\to\real$ by
$$w(x,x)\ \stackrel{\Delta}{=}\ \frac{\partial^4 \C^{xx}}{\partial (x^2)^2\partial (x^1)^2},$$
and a function on $a\:M\to\real$ by
$$a(x)\ \stackrel{\Delta}{=} \ \frac{\partial^4 \C^{xx_0}}{\partial (x^2)^2\partial (x^1)^2}.$$
These  admit the Taylor series  expansions
$$w(x,x)\ =\ w(x_0,x_0)+2\sum_{i=1}^m\frac{\partial w(x,x)}{\partial x^1}|_{(x_0,x_0)}v^i+O(\|v\|^2),$$
and
$$a(x)\ =\ w(x_0,x_0)+\sum_{i=1}^m\frac{\partial w(x,x)}{\partial x^1}|_{(x_0,x_0)}v^i+O(\|v\|^2).$$
Noting that the Taylor series expansion of $\C^{yy_0}$ about $y_0$ is
$$1+O(\|v\|^2),$$
it is easy to see that here  also, only the terms starting from the second order remain. Again,  following the same basic lines as in the previous argument shows that a similar upper bound holds for the second expectation in (\ref{upperbound}). From our earlier discussions, we are therefore done regarding proof of tightness of (\ref{know}).

In addition, since by  Lemma \ref{firstterm}, we know that $\sqrt{\sum (f_j(x))^2/k}$ converges, uniformly over $M$, and with probability one, to 1, we have  (e.g.\ \cite{Billingsley}[Theorem 4.4]) the joint weak convergence of the pair
\beqq
\left(\sqrt{k}\left(\frac{\sum f_j(x) f_j(y)}{k}-\C(x,y)\right),\ \sqrt{\frac{\sum (f_j(x))^2}{k}}\sqrt{\frac{\sum (f_j(y))^2}{k}}\right).
\eeqq

Given this, the  continuous mapping theorem immediately yields the weak convergence of $\alpha_k$, as required.

\subsubsection{$\Delta_k$ converges weakly in $\Btwo$}
\label{Sec:932}

Recall the expression for $\Delta_k$:
\begin{equation}
\sqrt{k}\left(1-\sqrt{\frac{\sum (f_j(x))^2}{k}}\sqrt{\frac{\sum (f_j(y))^2}{k}}\right)\times \frac{\C(x,y)}{\sqrt{\frac{\sum (f_j(x))^2}{k}}\sqrt{\frac{\sum (f_j(y))^2}{k}}}.
\label{diff}
\end{equation}
We have already seen that the denominator in the rightmost ratio here converges a.s., and uniformly, to one, and so a simple 
application of {Theorem 4.4 from \cite{Billingsley} and} the continuous mapping theorem imply that we need only concern ourselves with the weak convergence of the sequence
of processes $\Gamma_k$ defined by
\beq
\Gamma_k(x,{y}) \ \definedas \ {\sqrt{k}\left(\sqrt{\frac{\sum (f_j(x))^2}{k}}\sqrt{\frac{\sum (f_j(y))^2}{k}}-1\right)}. 
\eeq

To prove this convergence, we shall, for large enough $k$, bound $\Gamma_k$ from above and below by two sequences of processes, 
which converge to the same limit.
These bounds (cf.\ \eqref{final:inequ} below)  involve a common term, the weak convergence of which is known, and a smaller term, which converges a.s.\ and uniformly to zero.

The bound depends on the following  algebraic  inequality, due to Cartwright and Field \cite{Cartwright-Field}.

\begin{theorem}[\cite{Cartwright-Field}]
Let $w_i,\,1\leq i\leq n$ be numbers summing to 1. Let $x_i$ be positive numbers in $[a,b]$ ($0<a<b$), whose arithmetic and geometric means are denoted by $AM_w$ and $GM_w$, respectively. Then, 
$$\frac{1}{2b}\sum w_i(x_i-AM_w)^2\ \leq\  AM_w-GM_w\ \leq\  \frac{1}{2a}\sum w_i(x_i-AM_w)^2.$$
\label{CF:theorem}
\end{theorem}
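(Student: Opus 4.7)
The plan is to prove the two-sided inequality by establishing two companion integral identities -- one expressing $AM_w - GM_w$ directly, and one expressing $AM_w\,\log(AM_w/GM_w)$ -- and then bridging between them with the elementary inequality $\log(1+u)\ge u/(1+u)$, valid for $u\ge 0$. Throughout, write $A=AM_w$, $G=GM_w$, $S=\sum w_i(x_i-A)^2$, and $D=A-G$.

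For the lower bound, I would take a $w$-weighted sum of the calculus identity $(x-G)-G\log(x/G)=\int_G^{x}(t-G)/t\,dt$ over $i$. Since $\sum w_i\log(x_i/G)=0$ by the definition of $G$, the left side collapses to $\sum w_i(x_i-G)=A-G$, yielding
\[
A-G \;=\; \sum_i w_i\int_G^{x_i}\frac{t-G}{t}\,dt.
\]
On the range of integration, $t$ lies between $\min(G,x_i)$ and $\max(G,x_i)$, hence in $[a,b]$, so $1/t\ge 1/b$. After a brief sign check (when $x_i<G$ both the integration limits and the integrand $(t-G)/t$ flip sign, so the summand remains nonnegative), each integral is bounded below by $(x_i-G)^2/(2b)$. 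Summing, and using the variance decomposition $\sum w_i(x_i-G)^2=S+D^2\ge S$, gives $A-G\ge S/(2b)$.

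For the upper bound, I would run the analogous construction centered at $A$ instead of $G$. A $w$-weighted sum of $(x-A)-A\log(x/A)=\int_A^x(t-A)/t\,dt$ makes the linear piece (rather than the log piece) vanish, giving
\[
A\log(A/G) \;=\; \sum_i w_i\int_A^{x_i}\frac{t-A}{t}\,dt.
\]
Bounding $1/t\le 1/a$ on the integration range now produces $A\log(A/G)\le S/(2a)$. The last step is the bridge back to $A-G$: the elementary inequality $\log(1+u)\ge u/(1+u)$ (for $u\ge 0$) applied with $u=(A-G)/G$ simplifies to $\log(A/G)\ge(A-G)/A$, so
\[
A-G\;\le\; A\log(A/G)\;\le\; S/(2a).
\]

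The only delicate point is the sign bookkeeping in the integral bounds when some $x_i$ lies on the opposite side of $G$ (resp.\ $A$) from the expansion point; rewriting each integral uniformly as $\int_{\min}^{\max}|t-c|/t\,dt$ with $c\in\{G,A\}$ makes the estimate go through identically in both cases. Everything else is routine calculus, together with the one-line derivative check that establishes $\log(1+u)\ge u/(1+u)$ on $[0,\infty)$.
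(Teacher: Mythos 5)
Your proof is correct. Note that the paper itself does not prove this statement at all: it is quoted verbatim from Cartwright and Field and used as a black box (with $n=2$, $w_1=w_2=\smallhalf$) in Section \ref{Sec:932}, so there is no in-paper argument to compare against. Your two integral identities are both valid instances of $\int_c^x (t-c)/t\,dt=(x-c)-c\log(x/c)$, and the key structural point---that expanding around $G$ kills the logarithmic term and yields $A-G$ exactly, while expanding around $A$ kills the linear term and yields $A\log(A/G)$---is handled correctly; the sign bookkeeping when $x_i$ lies below the expansion point is dealt with properly, the bound $\sum_i w_i(x_i-G)^2\ge \sum_i w_i(x_i-A)^2$ follows from the bias--variance decomposition exactly as you say, and the bridge $\log(1+u)\ge u/(1+u)$ with $u=(A-G)/G\ge 0$ (nonnegativity of $u$ being supplied, without circularity, by your own lower bound) closes the upper estimate. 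Two minor remarks: your argument silently uses $w_i\ge 0$ when passing the termwise integral bounds through the weighted sum---this hypothesis is missing from the paper's statement as printed but is part of the original Cartwright--Field result and certainly holds in the paper's application---and you also need $a\le G\le A\le b$ so that the integration variable stays in $[a,b]$, which is immediate since both means lie between $\min_i x_i$ and $\max_i x_i$. As a bonus, your intermediate steps give the slightly sharper facts $A-G\ge \bigl(S+(A-G)^2\bigr)/(2b)$ and $A-G\le A\log(A/G)\le S/(2a)$, where $S=\sum_i w_i(x_i-A)^2$.
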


To apply Theorem \ref{CF:theorem}  we note first that we know that  $k^{-1} \sum (f_j(x))^2$ converges to $1$ a.s.\ and
 uniformly.   Thus, given any $\ep>0$, there 
 exists a (random) $k_0$ such that, for all  $ x\in M$, and all $k\geq {k_0}$,
 \beqq
 1-\ep  \leq \frac{1}{k} \sum f^2_j(x)  \   \leq \  1+ \ep.
  \eeqq 
 
Now apply the theorem, assuming that $k\geq k_0$, taking   $n=2$, $[a,b]=[1-\ep ,1+\ep ]$,  $w_1=w_2=1/2$,  and
\beqq
x_1 \ = \   \sum f_j^2(x)/k, \quad
x_2 \ = \  \sum f_j^2(y)/k.
\eeqq
Setting
\beq
\label{Nk:firsttime}
 N_k(x) \ \definedas \  \sqrt{k}\left( \sum f_j^2(x)/k\, - \, 1\right) ,
 \eeq
a little algebra leads to 
\beq
\label{final:inequ}
&&\smallhalf \left( N_k(x) + N_k (y)\right) - \frac{(N_k(x) -N_k(y))^2}{4(1-\ep)\sqrt{k}}
\\ &&\qquad\qquad  \leq \ \Gamma_k(x,y)\ \leq \ \smallhalf \left( N_k(x) + N_k (y)\right) - \frac{(N_k(x) -N_k(y))^2}{4(1+\ep)\sqrt{k}} .
\nonumber
\eeq
 
 But this is precisely the inequality that we described above.  Although it would be straightforward to establish it independently, the weak convergence of $N_k$ has already been proven in Section \ref{Sec:alpha}, since $N_k$ is just the  process (\ref{know}) over $\text{diag}(M\times M)$. From this immediately follows the weak convergence of the processes from $M\times M\to \real$ defined
by $(x,y)\to N_k(x)+N_k(y)$ and  $(x,y)\to N_k(x)-N_k(y)$.   

This completes the proof of the weak convergence of $\Delta_k$.

\subsection{The continuous mapping argument}
\label{cmt:sec}
To complete the proof of Lemma \ref{secondterm}, we exploit the  fact, proven in the previous two sections, that $\tzeta_k$ converges weakly in $\Btwo$ to the Gaussian process $\tzeta$ with covariance function
given by \eqref{correlationfunction}, and use it to show that the ratio processes
\beq
\label{zetak:again}
\zeta_k(x,y) \ = \ \frac{\tzeta_k (x,y)}{1-\C^2(x,y)}
\eeq
converge weakly in $C_b(\tM)$.

As described at the beginning of this section, this follows immediately from an application of the continuous mapping theorem, once we show that the mapping $H\:\Btwo\to C_b(\tM)$, defined by
\beq
\label{mappingGin:eq}
(H(\phi ))(x,y)\ = \ \frac{\phi (x,y)}{1-\C^2(x,y)}  
 \eeq 
is continuous, with probability one, for the probability measure supported on the paths of $\tzeta$. 
%
%

Recall that $\tzeta$ is at least $C^2$ over $M\times M$ because of weak convergence in $\Btwo$. The same (and more) is true for the covariance function $\C$, so the issue of continuity of $H$ is trivial if we restrict $\gamma$ to a region away from the diagonal of $M\times M$.

So the only question remaining is what happens as $y\to x$. What we shall now do is investigate the limits
\beqq
\lim_{y\to x} \frac{ \tzeta(x,y)}{1-\C^2(x,y)},
\eeqq
and show that they depend only on ratios of well defined functions of the second derivatives of 
$\tzeta$ and $\C$. This will immediately imply the continuity of $H$, and thus complete the proof of Lemma
\ref{secondterm}.

To this end, take $X_x\in T_x M,$ and a $C^2$ curve $c$ in $M$ such that
\beqq
c\: (-\delta,\delta)\rightarrow M,\quad c(0)=x,\quad \dot{c}(0)=X_x.
\eeqq
Then, using the symmetry of $\tzeta$ and $\C$,
\beqq
\lim_{y\to x}\frac{ \tzeta(x,y)}{1-\C^2(x,y)}\ =\ \lim_{u\rightarrow 0}\frac{ \tzeta(c(u),x)}{1-\C^2(c(u),x)}.
\eeqq
It follows from \eqref{correlationfunction} that the limit of the  numerator is zero, while the same is true of the denominator since $\C(x,x)=1$. 

By L'H\^opital's rule, the limit above is equal to
\beq
\label{gettingthere:eq}
 \lim_{u\rightarrow 0}\frac{\frac{ {d} \tzeta(c(u),x)}{ {d}u}}{-2\C(c(u),x)\frac{ {d}\C(c(u),x)}{ {d}u}},
\eeq
The denominator here is easily seen to be  zero, since $x=y$ is a critical point for $\C(x,y)$ and $\C$ is differentiable.
To check that the same is true for the numerator, note that $\tzeta$ is differentiable, with zero mean and  covariance
function given by the second derivative of the covariance function of $\tzeta$. That is, 
\beqq
\left[\E \left\{(X_x \tzeta(x,y))^2\right\}\right]_{y=x}  \ =\ \left[X_{y_1}X_{y_2}\E \left\{ \tzeta(x,y_1) \tzeta(x,y_2)\right\}\right]_{y_1=y_2=x}.
\eeqq

Using the specific form   \eqref{correlationfunction} of this covariance,  and denoting $X_{ x_1}\C^{ x_1 x}$
for  $X_{ y_1}\C^{ y_1 x}|_{ y_1= x_1}$,  we have that
\beqq
&&X_{ y_1}\E \{ \tzeta( y_1, x) \tzeta( y_2, x)\}|_{ y_1= x_1}\\
&&\quad = \smallhalf {(\C^{ y_2 x})^3} X_{ x_1}\C^{ x_1 x}+\smallhalf {\C^{ y_2 x}} X_{ x_1}\C^{ x_1 x}+
{\mbox{$\frac{3}{2}$}}{\C^{ y_2 x}}(\C^{ x_1 x})^2X_{ x_1}\C^{ x_1 x}\\
&&\qquad +\smallhalf {\C^{ y_2 x}} \left[(\C^{ x_1 y_2})^2X_{ x_1}\C^{ x_1 x}+2\C^{ x_1 x}\C^{ x_1 y_2}X_{ x_1}\C^{ x_1 y_2}\right]+\C^{ y_2 x}X_{ x_1}\C^{ x_1 x}
\\   &&\quad\qquad   -(\C^{ y_2 x})^2X_{ x_1}\C^{ x_1 y_2}
+X_{ x_1}\C^{ x_1 y_2}-\C^{ y_2 x}X_{ x_1}\C^{ x_1 x}  \\
 &&\qquad\qquad   -2\C^{ x_1 y_2}\C^{ x_1 x}X_{ x_1}\C^{ x_1 x}-(\C^{ x_1 x})^2X_{ x_1}\C^{ x_1 y_2}-\C^{ y_2 x}X_{ x_1}\C^{ x_1 x}.
\eeqq
Taking the additional derivative $X_{ y_2}$,  and then setting $ x_1= x_2= x$ gives 
$$2(\nabla^2\C( x, x)(X_ x,X_ x)-\nabla^2\C( x, x)(X_ x,X_ x))=0.
$$ 

Thus, since the variance here is zero, $ y= x$ is indeed a critical point of $ \tzeta( y, x)$, and so to evaluate 
the limit in \eqref{gettingthere:eq} we need yet  another round of  L'H\^opital's rule. This gives us that 
the limit is equal to
  \beqq
  \lim_{u\rightarrow 0}\frac{\frac{ {d}^2 \tzeta(c(u), x)}{ {d}u^2}}{-2\left[\left(\frac{ {d}\C(c(u), x)}{ {d}u}\right)^2+\C(c(u), x)\frac{ {d}^2\C(c(u), x)}{ {d}u^2}\right]}
\ = \frac{-\nabla^2 \tzeta( x, x)(X_ x,X_ x)}{2\nabla^2\C( x, x)(X_ x,X_ x)}
\eeqq
the equality here following from the fact that  $y= x$ is a critical point for both $ \tzeta( y,x)$ and $\C( y, x)$.

However, all terms here are well defined,  finite, and non-zero with probability one, so we are done.

\section{Proof of Lemma \ref{errorterm}}
\label{errproof}
To prove the lemma, we need to show that the sequence
\beqq
\sup_{x,y\in\tM} E^{x,k}(y)\ =
 \ \sup_{x,y\in\tM}\frac{k}{\|f^k(y)\|^2}\frac{(1-\C(x,y))^2}{(1-\widehat{\C}_k(x,y))^2}\left(\frac{1}{k}\|P_x f^{x,k}(y)\|^2\right)
\eeqq
converges to zero, with probability one.

By Lemmas \ref{firstterm} and \ref{secondterm} we know that each of the first two factors here a.s.\ converge, uniformly over $\tM$,  to one.
So it suffices to show the convergence of the final factor to zero, or that
\begin{equation}
\lim_{k\rightarrow\infty}\sup_{(x,y)\in\tM}\frac{1}{k}\|P_x f^{x,k}(y)\|^2\ =\ 0, \qquad a.s.
\label{needit}
\end{equation}
Since we have already shown that $f^x(y)$ is a.s.\ bounded over $\tM$, the absolute value of its supremum has (on a large deviations scale) Gaussian-like tails, and so standard Gaussian arguments show that the maximum of $k$ i.i.d.\ copies of this process can, asymptotically, be a.s.\ bounded by $C\sqrt{\log k}$ for some finite $C$. 

Since $P_x$ is orthogonal projection onto an $(m+1)$-dimensional subspace of $\mathbb{R}^k$, it now follows that, for large enough $k$,
\beq
\label{bound:sec10}
\frac{1}{k}\|P_x f^{x,k}(y)\|^2\ < \ \frac{C(m+1)\log k}{k},
\eeq
from which (\ref{needit}) now follows, and we are done.

\section{Fluctuation Theory for Local Reaches}
\label{sec:fluctuation}
We now return to the last part of Theorem \ref{maintheorem}, in which we described a fluctuation result involving the local reaches of the random manifolds $h^k(M)$. In particular, we want to consider the $k\to\infty$ distributional limit of the functions
\beq
\label{flucform:sec11}
\sqrt{k}\left(\cot^2\theta_k(\cdot)-\sigma^2_c(f,\cdot)\right)
\eeq
where $\theta_k(x)$, defined by \eqref{thetak:def}, is the local reach of $h^k(M)$ at the point $h^k(x)$, for $x\in M$.

The main result of this section is Theorem \ref{fluctheorem}, which contains what is needed to complete the statement of Theorem 
\ref{maintheorem}, in that the limit process for \eqref{flucform:sec11} is now described in  (formidable) detail.

To make that detail appear a little more natural, we shall do a little algebra before stating the theorem.

\subsection{Some algebra and rearrangements}

From the proof of Lemma \ref{lma:implemma}, we know that 
\beq
\cot^2 \theta_k(x) \
= \ 
\sup_{y\in M\setminus \{x\}} \left\{R_k(x,y)
\, -\, E^{x,k}(y)\right\},
\label{eq:locreach2}
\eeq
where $E^{x,k}(y)$ is the  `error' term defined at \eqref{errt} and we set
\beqq
R_k(x,y)\ \definedas\ 
\frac{k}{\|f^k(y)\|^2}\frac{(1-\C(x,y))^2}{(1-\widehat{\C}_k(x,y))^2}\left(\frac{1}{k}\|f^{x,k}(y)\|^2\right).
\eeqq
We already know from Lemma \ref{errorterm} that $E^{x,k}(y)\to 0$ uniformly in $x$ and $y$ as $k\rightarrow\infty$. However, looking back over the proof, in particular the final inequality \eqref{bound:sec10}, we see that the same is true for 
$\sqrt{k}E^{x,k}(y)$, from which it follows that we can ignore the error term in \eqref{eq:locreach2}. In addition, since we also know from Theorem \ref{maintheorem}  that 
\beq
\label{needed:eq}
\lim_{k\rightarrow\infty}\sup_{(x,y)\in\tM}\left|R_k(x,y)-\text{Var}(f^x(y))\right|=0.
\eeq
Thus it seems not unreasonable that the structure of the limit  of \eqref{flucform:sec11} might come from a continuous mapping theorem and the weak convergence of the random processes  $\gamma_k$, where
\begin{equation}
\gamma_k(x,y) \ \definedas \ \sqrt{k}\left(R_k(x,y)-\text{Var}(f^x(y))\right),\qquad (x,y)\in\tM.
\label{dlimit}
\end{equation}
If we now recall/introduce the  notations,
\beq
\label{Sigmak}
\Sigma^{(1)}_k(x) \ = \ \frac{\| f^k(x)\|^2}{k},\qquad \Sigma^{(2)}_k(x,y) \ =\  \frac{\|f^{x,k}(y)\|^2}{k},
\eeq
and
\beq
\label{Zk}
Z_k (x,y)  &=&  \sqrt{k}\left[\left(\frac{1-\C(x,y)}{1-\widehat{\C}_k(x,y)}\right)^2-1\right], \\
B_k(x,y)    &= & \sqrt{k}\left(  \Sigma^{(2)}_k(x,y)    -\text{Var}(f^x(y))\right),   \label{Bk}\\
N_k(y) &=& \sqrt{k}\left(\Sigma^{(1)}_k(y) - 1\right),
\label{Nk}
\eeq
then it takes no more than a few lines of simple algebra to check that 
\beq
 \gamma_k(x,y)   
\label{breakdown} &=&\ B_k(x,y) \,  -\,  N_k(y)Z_k(x,y)  \frac{\Sigma^{(2)}_k(x,y)}{\sqrt{k}\Sigma^{(1)}_k(y)} \\ 
\notag &&\qquad \qquad -\     \frac{\Sigma^{(2)}_k(x,y)}{\Sigma^{(1)}_k(y)}     N_k(y) \,  +\,  
\Sigma^{(2)}_k(x,y) Z_k(x,y)  .
\eeq
 
Now we wave our hands a little to come to some vague conclusions, before stating Theorem \ref{fluctheorem} which will make these conclusions precise, and then giving  proofs.
Firstly however, to reduce the lengths of some of the formulae to come, we recall some of our notational shorthand,
\beq
\label{shorthand}
\C^{xy} \ =\  \C(x,y),\quad
\wC_k^{xy} \ =\  \wC_k(x,y),
\eeq
and introduce
\beq
\label{varfxy-early} 
V^{xy}\ =\ \Var\left(f^x(y)\right)\ = \
\frac{1-(\C^{xy})^2-\sum_{i=1}^m (X_i\C^{xy})^2}{(1-\C^{xy})^2}.
\eeq
(To see why the right hand side here is indeed $\Var\left(f^x(y)\right)$, see
\eqref{varfxy} below.)

Now consider the various terms in \eqref{breakdown}.
 Although we did not state it explicitly, we have actually already proven that $N_k$ has a Gaussian limit. (See the discussion below in the proof of Lemma \ref{fidi-lemma-12}.) We also know, from Lemmas \ref{firstterm} and \ref{thirdterm},  that, as $k\to\infty$,   uniformly on $M$ and $\tM$,  respectively, 
\beqq
  \Sigma^{(1)}_k(x)\convas 1 \ \ \text{and}\ \  \Sigma^{(2)}_k(x,y)\convas V^{xy}.
 \eeqq
 In addition,  Lemma \ref{secondterm} and the a.s.\ convergence of $\Sigma^{(1)}_k$ lead to the expectation (this is the handwaving step) that $B_k$
 and $Z_k$ will both have Gaussian limits. 
Substituting this `information' into \eqref{breakdown}, the implication is that the first term on the right hand side will  have a Gaussian limit on $\tM$, the second will converge to zero, the third  will converge to $V^{xy}$ times a Gaussian process on $M$,  while the last term will converge to $V^{xy}$ times a Gaussian process on $\tM$. Unfortunately, all the limit processes will be correlated, which is what makes the precise description of this a little long-winded, as we now see.

%

\subsection{The fluctuation result}

\begin{theorem}   
\label{fluctheorem}  
Let $f$ and $M$ satisfy the assumptions of Theorem \ref{maintheorem}, including the conditions that $M$ is $C^6$ and that, with probability one, $f\in C^6(M)$. Then
there exists a sequence $\bar\gamma_k$ of random processes from $M\to\real$, such that, for all $x\in M$,  
\beq
\label{bound-diff-11}
\sqrt{k}\left|\cot^2\theta_k(x)-\sigma^2_c(f,x)\right| \ \leq \ \left|\bar\gamma_k(x)\right|
\eeq
and a limit process $\bar\gamma\:M\to\real$ such that
\beq
\label{flucform:sec3-11}
\bar\gamma_k(\cdot)  \Rightarrow \ \bar\gamma (\cdot).
\eeq
The convergence here is weak convergence, in the Banach space $C_b(M)$ of  continuous functions on $M$ with supremum norm, and 
\beqq
\bar\gamma (x) \ = \ \sup_{y\in M\setminus x}\gamma(x,y), 
\eeqq
%
%
%
where $\gamma$ is the a.s.\ continuous Gaussian process over $\tM$ representable in distribution as
\begin{equation}
\gamma(x,y)\ = \ \beta(x,y)+V^{xy}\eta(y)+ 2V^{xy}\zeta(x,y)(1+\C(x,y)).
\label{gammaeq}
\end{equation}
Here
\begin{enumerate}
\item $\eta(y)$ is a centered Gaussian process over $M$ with correlation function 
$$\E\{\eta(y_1)\eta(y_2)\}=2(\C(y_1,y_2))^2.$$
\item $\beta(x,y)$ is a centered Gaussian process over $\tM$ with correlation function
$$\E\{\beta(x_1,y_1)\beta(x_2,y_2)\}=2\left(\E\{f^{x_1}(y_1)f^{x_2}(y_2)\}\right)^2.$$
\item $\zeta(x,y)$ is a centered Gaussian process over $\tM$ with correlation function
\begin{align*}
&\E \{ \zeta(x_1,y_1) \zeta(x_2,y_2)\}\\
&\quad =
 \frac{1}{(1-(\C^{x_1y_1})^{2})(1-(\C^{x_2y_2})^{2})}\\
&\qquad \times\bigg\{\smallhalf\C^{x_1 y_1}\C^{x_2y_2} \left[(\C^{y_1 x_2})^2+(\C^{y_1 y_2})^2+(\C^{x_1 x_2})^2+(\C^{x_1 y_2})^2\right]\qquad\\
&\qquad\qquad 
+\C^{x_2y_1}\left[\C^{x_1 y_2}-\C^{x_1 x_2}\C^{x_2y_2}\right]+\C^{y_1 y_2}\left[\C^{x_1 x_2}-\C^{x_1 y_2}\C^{x_2y_2}\right] 
\\
&\qquad\qquad\qquad\qquad\qquad -\C^{x_1 y_1}\left[\C^{x_1 x_2}\C^{x_1 y_2}+\C^{y_1 y_2}\C^{x_2y_1}\right]\bigg\}.
\end{align*}
\end{enumerate}
As for the corresponding  cross-covariance functions,  we write them in terms of   
 $X_1,\dots,X_m$,   an orthonormal (with respect to the induced metric)    vector field on $M$. None of the cross-covariances  are dependent on the particular choice of  vector field.
\begin{align*}
&\E\{\eta(y_1)\beta(x_2,y_2)\} =
\frac{ 2\left[\C^{y_1y_2}-\C^{x_2y_2}\C^{x_2y_1}-\sum_i X_i\C^{xy_1}|_{x=x_2} X_i\C^{xy_2}|_{x=x_2}
\right]^2}{(1-\C ^{x_2 y_2})^2},\\
&\E\{\eta(y_1)\zeta(x_2,y_2)\} =
\frac{2\C^{x_2y_1}\C^{y_1y_2}-\C^{x_2y_2}\left\{(\C^{x_2y_1})^2+(\C^{y_1y_2})^2\right\}}{1-(\C^{x_2 y_2})^2},
\end{align*}
%
\begin{align*}
&\E\{\zeta(x_1,y_1)\beta(x_2,y_2)\} \\
&=
\frac{1}{(1-(\C^{x_2 y_2})^2)}\\
&\qquad \times \left[\frac{2}{(1-\C^{x_1y_1})^2}\left\{\left(\C^{y_1y_2}-\C^{x_1y_1}\C^{x_1y_2}
-\sum X_i \C^{xy_1}|_{x=x_1} X_i \C^{xy_2}|_{x=x_1}\right)\right.\right.\\
&\qquad\qquad \qquad\qquad  \times\left.\left(\C^{y_1x_2}-\C^{x_1y_1}\C^{x_1x_2}-\sum X_i\C^{xy_1}|_{x=x_1}X_i\C^{xx_2}|_{x=x_1}\right)\right\}\\
&\qquad -\frac{\C^{x_2y_2}}{(1-\C^{x_1y_1})^2}\left\{\left(\C^{y_1y_2}-\C^{x_1y_1}\C^{x_1y_2}-\sum X_i\C^{xy_1}|_{x=x_1} X_i \C^{xy_2}|_{x=x_1}\right)^2\right.\\
&\qquad\qquad\qquad \  +\left.\left.\left(\C^{y_1x_2}-\C^{x_1y_1}\C^{x_1x_2}-\sum X_i \C^{xy_1}|_{x=x_1} X_i \C^{xx_2}|_{x=x_1}\right)^2\right\}\right].
\end{align*}
\label{thm:mainweak}
\end{theorem}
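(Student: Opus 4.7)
The plan is to base everything on the decomposition \eqref{breakdown} of $\gamma_k(x,y) = \sqrt{k}(R_k(x,y) - V^{xy})$ from \eqref{dlimit}, establish weak convergence $\gamma_k \Rightarrow \gamma$ in a suitable function space over $M\times M$, and then apply the continuous mapping theorem twice. For the bound \eqref{bound-diff-11}, I would take $\bar\gamma_k(x) \definedas \sup_{y\neq x}\gamma_k(x,y) + \sqrt{k}\sup_{y} E^{x,k}(y)$ (or a two-sided envelope of this form absorbed into the absolute value in \eqref{bound-diff-11}); combining $\cot^2\theta_k(x) = \sup_{y\neq x}(R_k - E^{x,k})$, $\sigma_c^2(f,x) = \sup_{y\neq x} V^{xy}$, and the universal inequality $|\sup f - \sup g|\leq\sup|f-g|$ yields the stated bound. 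The error-term contribution is asymptotically negligible: the final estimate \eqref{bound:sec10} in Section \ref{errproof} actually delivers $\sqrt{k}\,E^{x,k}(y) = O((\log k)/\sqrt{k})$ uniformly and almost surely, so it drops out of the weak limit. The final continuous-mapping step passing from $\gamma_k$ to $\bar\gamma_k$ uses continuity of $\phi\mapsto (x\mapsto \sup_{y\neq x}\phi(x,y))$ from $C(M\times M)$ to $C(M)$, which is why we need $\gamma$ to extend continuously across the diagonal.

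Next, I would establish joint weak convergence of the five building blocks $(\Sigma^{(1)}_k,\Sigma^{(2)}_k,N_k,Z_k,B_k)$ of \eqref{Sigmak}--\eqref{Nk}. The first two converge almost surely, uniformly, to $1$ and $V^{xy}$, by Lemmas \ref{firstterm} and \ref{thirdterm}. The process $N_k$ is the restriction to $\text{diag}(M\times M)$ of the process $\eta_k$ of \eqref{know}, whose weak convergence was proven in Section \ref{Sec:alpha}. For $Z_k$, write $r_k = (1-\C^{xy})/(1-\wC_k^{xy})$ and expand $r_k^2-1 = 2(r_k-1)+(r_k-1)^2$; combining with the weak convergence of the ratio $\zeta_k$ from Section \ref{cmt:sec} yields $Z_k = 2\zeta_k(x,y)(1+\C^{xy}) + o_{\text{a.s.}}(1)$ uniformly on $\tM$, so $Z_k \Rightarrow 2\zeta(x,y)(1+\C^{xy})$. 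The only genuinely new ingredient is $B_k$, which is a $\sqrt{k}$-normalized i.i.d.\ sum of copies of $(f^x(y))^2 - V^{xy}$; a Banach-space functional CLT in the spirit of Theorem \ref{Anderson:thm}, together with tightness (see below), yields $B_k \Rightarrow \beta$ with covariance $2(\E\{f^{x_1}(y_1)f^{x_2}(y_2)\})^2$, via the standard Gaussian identity $\COV{A^2,B^2} = 2\,\COV{A,B}^2$ for centered jointly Gaussian $A,B$. Joint convergence of the entire tuple is automatic because the same underlying i.i.d.\ samples drive every component. A first application of the continuous mapping theorem to \eqref{breakdown} then identifies the weak limit of $\gamma_k$ with the $\gamma$ of \eqref{gammaeq}. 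The explicit auto- and cross-covariances displayed in the theorem are obtained by routine but lengthy Wick (Isserlis) expansions using the representation \eqref{eq:fxyrep} of $f^x(y)$, together with $\E\{(X_if)(x)(X_jf)(x)\}=\delta_{ij}$ and $\E\{f(x)(X_if)(x)\}=0$, which follow from the unit variance of $f$ and the choice \eqref{indmet} of induced metric.

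The main obstacle is tightness of the building blocks strong enough to guarantee that the limit process $\gamma$ extends continuously across $\text{diag}(M\times M)$, which is exactly what the final sup-over-$y$ continuous mapping argument requires. The processes $B_k$, $Z_k$ and the other pieces of \eqref{breakdown} all carry denominators of the form $1-\C^{xy}$, $1-(\C^{xy})^2$, or $(1-\C^{xy})^2$, all of which vanish on the diagonal; continuity there must be recovered by matching zero-orders in the numerator through repeated L'H\^opital's rule, exactly as in Section \ref{cmt:sec}. Since the mere definition of $f^x(y)$ already consumes two differentiations of $\C$ (Section \ref{limit:subsec}, requiring $f\in C^3$), the $\sqrt{k}$-fluctuation layer consumes two more, and the tightness argument in the Banach space $\Btwo$ of twice continuously differentiable functions on $M\times M$ (following the template of Section \ref{tight:sec}, which rested on uniform moment bounds for increments of second covariant derivatives of the covariance) costs yet another two orders. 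This cumulative accounting pushes the total regularity requirement on $\C$ to $C^{12}$, which is precisely what $f\in C^6$ delivers. Once this tightness has been verified in $\Btwo$, together with the already-proven fi-di convergence of each component, Skorokhod-style joint weak convergence follows, the continuous mapping arguments of the previous two paragraphs go through, and the theorem is proven.
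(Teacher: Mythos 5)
Your overall architecture coincides with the paper's proof: the decomposition \eqref{breakdown}, joint weak convergence of $(\Sigma^{(1)}_k,\Sigma^{(2)}_k,B_k,N_k,Z_k)$ (this is Lemma \ref{fidi-lemma-12}), the expansion $Z_k=2(1+\C)\zeta_k+o(1)$ as in \eqref{zetak:yetagain}, the negligibility of $\sqrt{k}E^{x,k}$ from \eqref{bound:sec10}, the sup-inequality bound giving \eqref{bound-diff-11}, and the final continuous-mapping step. The genuine gap is in your treatment of $B_k$, which is the one genuinely new ingredient and the place where the $C^6$ hypothesis is actually consumed. You propose to prove tightness ``in the Banach space $\Btwo$ of twice continuously differentiable functions \dots following the template of Section \ref{tight:sec}'', and to recover continuity across the diagonal ``exactly as in Section \ref{cmt:sec}''. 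That template does not apply to $B_k$. Writing $B_k(x,y)=k^{-1/2}\sum_\ell \Lambda_\ell(x,y)/(1-\C^{xy})^2$ with $\Lambda_\ell=(1-\C^{xy})^2\bigl((f^x_\ell(y))^2-V^{xy}\bigr)$ as in \eqref{sum:lambdas}, the relevant map is $H(\phi)=\phi/(1-\C^{xy})^2$ of \eqref{eq:H}. Since $1-\C^{xy}$ vanishes to second order in $d_{M\times M}$ as $y\to x$, the denominator here vanishes to \emph{fourth} order, whereas the denominator $1-\C^2$ handled in Section \ref{cmt:sec} vanishes only to second order. Extending the limit of $B_k$ (and hence $\gamma$) continuously across $\text{diag}(M\times M)$ therefore needs four rounds of L'H\^opital's rule, i.e.\ control of the first four derivatives of the limit of $k^{-1/2}\sum_\ell\Lambda_\ell$; moreover the fourth derivative at the diagonal has strictly positive variance (by the nondegeneracy in Assumption \ref{f:assump}), so nothing short of this suffices. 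The $\Btwo$ norm controls only two derivatives, so on $\Btwo$ you cannot establish the a.s.\ continuity of $H$ on the support of the limit, and the continuous-mapping step for $B_k$ — and with it the identification of $\gamma$ and the continuity of $\bar\gamma$ — breaks down.

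The paper resolves this by proving weak convergence of the numerator sum in the space $B^{(4)}$ of four times continuously differentiable functions (with sup norms on all derivatives up to order four), and this is also where the smoothness hypothesis enters: $\Lambda_\ell$ is built from $f^x_\ell(y)$, which has one derivative less than $f$, so controlling four derivatives of $\Lambda_\ell$ requires $f\in C^5$, and the Taylor-expansion-based tightness computation (the analogue of Section \ref{tight:sec} one level up) costs one more, giving $f\in C^6$. Your bookkeeping (``two orders for the definition of $f^x(y)$, two for the fluctuation layer, two for tightness in $\Btwo$, hence $\C\in C^{12}$'') arrives near the right hypothesis but by reasoning tied to the wrong function space, and it hides the actual failure point. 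To repair your argument, replace the $\Btwo$ tightness claim for the $B_k$ block by tightness in $B^{(4)}$ (fi-di convergence is unchanged), verify that the fourfold L'H\^opital limit of $\Lambda/(1-\C^{xy})^2$ at the diagonal is finite and well defined, and only then invoke the continuous mapping theorem; the remaining blocks can stay in $\Btwo$ as you have them.
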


Although Theorem \ref{fluctheorem} takes a lot of space to state, its main implication is simple: The limiting fluctuations of the local reach of $h^k(M)$ are bounded by a functional of a Gaussian process on $\tM$. The detailed structure of this Gaussian process is complicated, and depends, in terms of properties such as differentiability, on the underlying covariance of $f$. For example, while the limit is a.s.\ continuous, it will not typically be differentiable, and fine sample path properties such as H\"older continuity will depend on the behaviour of the underlying covariance $\C$ in ways that are not at all obvious.

\section{Proof of  Theorem \ref{thm:mainweak}}
\label{mainweak:fidi}
We start with two lemmas, and then use these to complete the proof in the Section \ref{sec:mainweak:pf}.

\subsection{Two Lemmas}

\begin{lemma}
\label{fidi-lemma-12}
Under the conditions of Theorem \ref{fluctheorem}, and with the notation of the previous section,
we have the joint weak convergence of the following vector valued process over $\C_b(\tM)$:
\beq
\left(\Sigma^{(1)}_k,\,\Sigma^{(2)}_k,\,B_k,\,N_k,\,Z_k\right)\ \Rightarrow\ \ \left(1,\, V,\,\beta,\,\eta,\,2\zeta(1+\C)\right),
\label{fidis:eq:12} 
\eeq
where $V\:M\times M\to\real$ is defined by $V(x,y)=V^{xy}$.
\end{lemma}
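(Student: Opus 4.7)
My strategy is to reduce the claim to a joint Banach-space central limit theorem for partial sums of i.i.d.\ path-valued random variables, then assemble the result via Slutsky's lemma and the continuous mapping theorem. Since Lemmas \ref{firstterm} and \ref{thirdterm} already give $\Sigma^{(1)}_k \convas 1$ and $\Sigma^{(2)}_k \convas V$ uniformly on $M$ and $\tM$ respectively, those two coordinates contribute only deterministic limits and can be absorbed at the end. The substantive task is the joint weak convergence $(B_k, N_k, Z_k)\Rightarrow (\beta,\eta,2\zeta(1+\C))$.

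First I would rewrite the three fluctuation processes as, or as continuous functions of, normalized partial sums of i.i.d.\ path-valued random variables. The processes
\beqq
N_k(y) \ = \ \frac{1}{\sqrt k}\sum_{j=1}^k \left(f_j^2(y)-1\right),\qquad
B_k(x,y) \ = \ \frac{1}{\sqrt k}\sum_{j=1}^k \left((f_j^x(y))^2-V^{xy}\right),
\eeqq
are already of that form, with summands in $C(M)$ and $C_b(\tM)$ respectively, while $Z_k$ is obtained from $\tzeta_k$, already treated in Section \ref{sec:lemma3}: the algebraic identity
\beqq
Z_k \ = \ \tzeta_k \cdot \frac{2 - \C - \wC_k}{(1-\wC_k)^2}\ =\ \tzeta_k\cdot\frac{(1-\C)+(1-\wC_k)}{(1-\wC_k)^2},
\eeqq
combined with Lemma \ref{secondterm} (which gives $1-\wC_k=(1-\C)(1+o(1))$ uniformly) and the continuous mapping argument of Section \ref{cmt:sec}, yields $Z_k\Rightarrow 2\tzeta/(1-\C) = 2\zeta(1+\C)$. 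Hence it suffices to establish joint weak convergence of $(\tzeta_k, B_k, N_k)$ in $\Btwo\times C_b(\tM)\times C(M)$. Finite-dimensional convergence is the multivariate CLT applied to the underlying i.i.d.\ vector summands, and all cross-covariances follow from Wick/Isserlis applied to Gaussian quartic forms; matching these with the formulas for $\tzeta$, $\beta$, $\eta$ (including $\E\{\beta\eta\}$, $\E\{\zeta\eta\}$, $\E\{\zeta\beta\}$) of Theorem \ref{fluctheorem} is of the same flavour as the calculation for $\tzeta$ in Section \ref{wkzeta:sec}. Marginal tightness of $\tzeta_k$ is already in Section \ref{tight:sec}, and marginal tightness of $N_k$ is the restriction of those bounds to the diagonal.

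The main obstacle is tightness of $B_k$ in $C_b(\tM)$ up to the diagonal, because the summands $(f_j^x(y))^2 - V^{xy}$ are built from the singular-looking quotient $f^x(y) = (f(y)-\E\{f(y)\mid f(x),\nabla f(x)\})/(1-\C(x,y))$, whose $0/0$ behaviour on the diagonal makes a uniform modulus-of-continuity estimate delicate. Using the representation \eqref{eq:fxyrep} and the two-step L'H\^opital argument of Section \ref{limit:subsec}, $f^x(y)$ extends continuously to $\MtimesM$ as an explicit function of values, first, and second derivatives of $f$ and $\C$ at $(x,y)$. The $C^6$ smoothness assumption on $M$ and $f$ is exactly what allows one to further Taylor-expand these expressions near the diagonal and obtain, after centering, a Riemannian second-moment bound
\beqq
\E\left\{\left((f^x(y))^2 - V^{xy} - ((f^{x_0}(y_0))^2 - V^{x_0y_0})\right)^2\right\} \ \leq \ K\, d^2_{\MtimesM}\!\left((x,y),(x_0,y_0)\right),
\eeqq
together with analogous bounds on first and second Riemannian derivatives of the centered summands, following the template of Section \ref{Sec:alpha}: Wick's formula reduces fourth moments of increments to products of covariances, and careful bookkeeping shows that the zeroth- and first-order terms of the Taylor expansion cancel, leaving a quadratic bound. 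With this tightness in hand, the joint weak convergence of $(\tzeta_k, B_k, N_k)$ follows, and one application of the continuous mapping theorem to produce $Z_k$ from $\tzeta_k$, together with Slutsky's lemma absorbing $\Sigma^{(1)}_k$ and $\Sigma^{(2)}_k$, delivers \eqref{fidis:eq:12}.
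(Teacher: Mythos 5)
Your overall architecture (absorbing $\Sigma^{(1)}_k,\Sigma^{(2)}_k$ by their a.s.\ uniform limits, getting $N_k$ from the diagonal restriction of Section \ref{Sec:alpha}, writing $Z_k=2(1+\C)\zeta_k+O(1/\sqrt k)$ and reusing the $\Btwo$ convergence of $\tzeta_k$ plus the continuous mapping argument, then Cram\'er--Wold and Slutsky) matches the paper. The genuine gap is in your treatment of $B_k$. You assert that $f^x(y)$ ``extends continuously to $\MtimesM$'' and then propose a direct tightness bound
\beqq
\E\left\{\left((f^x(y))^2 - V^{xy} - \left((f^{x_0}(y_0))^2 - V^{x_0y_0}\right)\right)^2\right\} \ \leq \ K\, d^2_{\MtimesM}\!\left((x,y),(x_0,y_0)\right).
\eeqq
Both claims fail. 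The L'H\^opital computation of Section \ref{limit:subsec} shows that $\lim_{y\to x}f^x(y)=f(x)-\nabla^2 f(x)(X,X)/\nabla^2\C(x,x)(X,X)$ depends on the direction $X$ of approach (the paper says so explicitly), and the same is true of $V^{xy}$; so the summand $(f^x(y))^2-V^{xy}$ is bounded and continuous on $\tM$ but does \emph{not} extend continuously to $\MtimesM$. Consequently, taking $(x,y)$ and $(x_0,y_0)$ to the same diagonal point along two different directions makes the right-hand side of your bound tend to zero while the left-hand side stays bounded away from zero, so no such quadratic modulus in the product Riemannian metric can hold near the diagonal, and the Jain--Marcus-type argument of Section \ref{Sec:alpha} cannot be applied to $B_k$ as you propose.

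This singular behaviour at the diagonal is exactly the hard point, and the paper resolves it differently: it regularizes, writing $B_k=(1-\C^{xy})^{-2}\,k^{-1/2}\sum_\ell\Lambda_\ell$ with $\Lambda_\ell=(1-\C^{xy})^2\left((f_\ell^x(y))^2-V^{xy}\right)$, which is a well-behaved process on all of $M\times M$; proves weak convergence of the normalized sum not in $\Btwo$ but in $B^{(4)}$ (four derivatives, because $(1-\C^{xy})^2$ vanishes to fourth order at the diagonal, so resolving the $0/0$ in $\phi\mapsto\phi/(1-\C)^2$ on the limit paths requires four rounds of L'H\^opital); and only then applies the continuous mapping theorem. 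Your plan controls only first and second derivatives, which would not even suffice for the regularized route, and your explanation of where $C^6$ enters misses the actual mechanism: $f^x_\ell$ has one less derivative than $f$, four derivatives of $\Lambda_\ell$ are needed for $B^{(4)}$, and one more for the Taylor-expansion step in the tightness argument, which is precisely how $f\in C^6(M)$ arises. To repair your proof you must either adopt this regularization-plus-$B^{(4)}$ scheme, or replace $d_{\MtimesM}$ by a semimetric on $\tM$ that separates directions at the diagonal; as written, the tightness step would fail.
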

\begin{proof}

Most of the pieces that make up the proof of  Theorem \ref{thm:mainweak} are actually already in hand.
For a start,  by Lemmas \ref{firstterm} and \ref{thirdterm} we know $\Sigma^{(2)}_k(x,y)$ and $\Sigma^{(1)}_k(y)$ converge to the deterministic limits  $V^{xy}$ and 1, respectively, where the convergence is almost surely uniform in $(x,y)\in\tM$ and $y\in M$.  The corresponding weak convergence is, obviously, implied by this.  Secondly, in Section \ref{Sec:alpha} we established the weak convergence of $N_k$ in $C_b(M)$ (cf.\ \eqref{Nk:firsttime} and the last paragraph of Section 
\ref{Sec:932}).

To add $Z_k$ to this convergence, note that the main term there  -- $(1-\C(x,y))/(1-\widehat{\C}_k(x,y))$ -- already appeared in Section
\ref{sec:outhard}, and can be rewritten as  in  \eqref{long:eq}. Substituting  there the $\zeta_k(x,y)$ of \eqref{zetak:eq} and expanding  out the powers, simple algebra leads to the fact that
\beq
\label{zetak:yetagain}
Z_k(x,y) \ = \ 2(1+C(x,y))\zeta_k(x,y)\, + \, O(1/\sqrt{k}),
\eeq
and we have already shown the weak convergence of  $\zeta_k$ in $C_b(\tM)$ (cf.\ \eqref{zetak:again}). 

Note that to this point we have relied on results that arose in  earlier parts of the paper, and these required only that $f\in C^3(M)$. The additional level of differentiability required by the lemma, and so also by  Theorem \ref{fluctheorem}, comes from the following lemma, which  completes the collection by establishing the weak convergence of $B_k$.

In view of the fact that all the limit processes are either deterministic or Gaussian, applications of Slutsky's theorem and the 
Cram\'er-Wold device then complete the proof, modulo calculating all the the limit variances and covariances, for which we do not intend to write out the details.
\end{proof}

\begin{lemma}
Under the conditions of Lemma \ref{fidi-lemma-12}, and with the notation of the previous section,
$B_k\Rightarrow \beta$ in $C_b(\tM)$.
\end{lemma}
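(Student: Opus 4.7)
The plan is to prove $B_k\Rightarrow\beta$ in $C_b(\tM)$ by the standard two-step argument: finite-dimensional convergence through the multivariate central limit theorem, and tightness in a Banach space whose norm controls enough derivatives to cope with the $0/0$ behaviour of $f^x(y)$ near the diagonal of $M\times M$.

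For the finite-dimensional distributions, rewrite
\[
B_k(x,y) \ =\ \frac{1}{\sqrt k}\sum_{j=1}^k\bigl((f^x_j(y))^2-V^{xy}\bigr)
\]
as a standardized sum of i.i.d.\ centered scalar random variables. The analysis of Section \ref{limit:subsec} shows that $f^x(y)$ is a.s.\ bounded on $\tM$ under our smoothness assumptions, so all moments of the summands are finite. For any finite collection $(x_i,y_i)_{i=1}^n\subset\tM$ the classical multivariate CLT produces a centered Gaussian limit whose covariance entries are $\COV{(f^{x_i}(y_i))^2,(f^{x_j}(y_j))^2}$. A single application of Wick's (Isserlis') formula for Gaussian quadratic forms identifies these as $2\bigl(\E\{f^{x_i}(y_i)f^{x_j}(y_j)\}\bigr)^2$, exactly the covariance of $\beta$ stated in Theorem \ref{fluctheorem}.

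Tightness is the main obstacle and is where the $C^6$ hypothesis is used. As in the proof of Lemma \ref{secondterm}, the process $(f^x(y))^2$ is bounded on $\tM$ but only acquires a limit across the diagonal through repeated L'H\^opital expansions that bring in second and higher covariant derivatives of $f$. To secure equicontinuity right up to the diagonal, we would use the explicit formula \eqref{eq:fxyrep} to split off the singular denominator $1-\C(x,y)$, and prove tightness in a space analogous to $\Btwo$ of Section \ref{tight:sec}, whose norm controls first and second covariant derivatives of $B_k$. Following the recipe there, we bound the expected squared increments of those derivatives. Because the summands are polynomial (quartic) in $f$ and its derivatives, Wick's formula expresses both the second moments and the increment variances in terms of derivatives of $\C$ up to order six, and the Taylor-expansion bookkeeping exhibited in \eqref{fpart}--\eqref{spart}, applied now to these higher-order quantities, yields the cancellation of zeroth- and first-order terms and leaves a bound of order $d_{M\times M}^2\bigl((x,y),(x_0,y_0)\bigr)$. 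This is precisely the moment estimate needed to invoke the tightness criterion from Example~1.5.10 of \cite{van} that was used in Section \ref{Sec:alpha}.

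With fi-di convergence and tightness in hand, weak convergence of $B_k$ in $C_b(\tM)$ follows from the continuous embedding of the tightness Banach space into $C_b(\tM)$, and uniqueness of finite-dimensional distributions of centered Gaussian processes identifies the limit as $\beta$. The only genuinely new work is the tightness computation, which is a notationally heavier but conceptually identical repetition of the Taylor-cancellation argument already carried out for $\eta_k$ in Section \ref{Sec:alpha}; the higher smoothness of $f$ is needed because the relevant Gaussian products are quartic rather than bilinear, so higher-order derivatives of $\C$ must remain finite for the Wick-expanded moment bounds to be integrable.
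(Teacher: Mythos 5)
Your finite-dimensional step is fine: the summands $(f^x_j(y))^2-V^{xy}$ are i.i.d.\ with all moments, the classical CLT applies pointwise on $\tM$, and Wick's formula does give the covariance $2\bigl(\E\{f^{x_1}(y_1)f^{x_2}(y_2)\}\bigr)^2$ of $\beta$. The genuine gap is in the tightness plan. After you ``split off the singular denominator'' you are, in effect, doing what the paper does: write $B_k(x,y)=k^{-1/2}\sum_\ell\Lambda_\ell(x,y)/(1-\C^{xy})^2$ with $\Lambda_\ell=(1-\C^{xy})^2\bigl((f^x_\ell(y))^2-V^{xy}\bigr)$, prove weak convergence of the numerator on the compact $M\times M$ in a norm controlling derivatives, and then pass the denominator through a continuous-mapping argument. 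But a space controlling only first and second covariant derivatives (your $\Btwo$-analogue) cannot do this job here. In the induced metric $1-\C(x,y)$ vanishes to second order in $d(x,y)$, so the denominator $(1-\C^{xy})^2$ vanishes to \emph{fourth} order at the diagonal; the division map $\phi\mapsto\phi/(1-\C)^2$ is continuous on the limit's paths only after four rounds of L'H\^opital-type cancellation, which requires control of derivatives of the numerator up to order four. This is why the paper works in $B^{(4)}$ rather than $\Btwo$: two derivatives suffice in Lemma \ref{secondterm} only because the denominator there is $1-\C^2\sim d^2$, not its square. Equivalently, if you try to bound increments of $B_k$ itself near the diagonal, the second-order Taylor cancellation of Section \ref{Sec:alpha} leaves a bound that is swamped by the $d^{-4}$ factor, so the claimed $d^2_{M\times M}$ increment estimate does not follow.

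Relatedly, your accounting for the $C^6$ hypothesis is off: it is not that quartic Wick products force sixth derivatives of $\C$ into the moment bounds per se, but that $f^x(\cdot)$ has one less derivative than $f$, so demanding four derivatives of $\Lambda_\ell$ already needs $f\in C^5$, and the Taylor-expansion step in the tightness argument costs one more, giving $C^6$. A further point worth keeping in mind if you insist on working with $B_k$ directly: $f^x(y)$ has a direction-dependent limit as $y\to x$ (Section \ref{limit:subsec}), so $B_k$ does not extend continuously to $M\times M$, and tightness must be formulated for the numerator process on $M\times M$ (as in the paper) rather than for $B_k$ in a space of $C^2$ functions on the closed product.
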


\begin{proof}
The proof follows along the same lines as the proof of the weak convergence of $\alpha_k$ described in Section \ref{Sec:alpha}.

To start, we once again choose an orthonormal frame field $\{X_i\}$ for $M$, with the conventions of Section \ref{sec:notation}. Write the corresponding Riemannian normal  basis vectors as $\{{\partial}/{\partial x^i}\}$,  and $\left\{{\partial}/{\partial x^i}:{\partial}/{\partial y^i}\right\}$ as the corresponding basis for the tangent spaces on $M\times M$. In this basis, we have
\begin{equation}
f^x(y) \ =\  \frac{f(y)-\C^{xy}f(x)}{1-\C^{xy}}-\sum_{i=1}^m\frac{ \frac{\partial f(x)}{\partial x^i} \,\frac{\partial \C^{xy}}{\partial x^i}}{1-\C^{xy}},
\label{eq:fxyrep-end}
\end{equation}
and 
\begin{equation}
V^{xy} \ = \text{Var}\left(f^x(y)\right) \  = \ \frac{1-(\C^{xy})^2-\sum_{i} (\frac{\partial \C^{xy}}{\partial x^i})^2}{(1-\C^{xy})^2}.
\label{varfxy}
\end{equation}
If we now write
\beqq
\Lambda_\ell(x,y) &\definedas&
(1-\C^{xy})^2 \,\left(\left(f_\ell^x(y)\right)^2-V^{xy}\right),
\eeqq
then we can also write 
\beq
\label{sum:lambdas}
B_k(x,y) \ = \ \frac{k^{-1/2}\sum_{\ell=1}^k   \Lambda_\ell(x,y)  }{(1-\C^{xy})^2}.
\eeq
 Suppose we can show that the numerator here has a Gaussian limit, $\Lambda$, say, as $k\to\infty$. Since it is a sum of i.i.d.\ processes, this should not be too hard. To complete the proof of the weak convergence of the $B_k$ we could then use 
 a continuous mapping argument, as before, by defining a map, $H$ say,   between functions on $\tM$ via
\beq
\label{eq:H}
(H(\phi)) (x,y) \ =\ \frac{\phi(x,y)}{(1-\C^{xy})^2},
\eeq
where the image function is in $C_b(\tM)$. For this to work, we need to know that  $H$ is continuous, with probability one, for the probability measure supported on the paths of $\Lambda$.  (This is not straightforward, since, as we shall soon see, we once again run into $0/0$ issues for $(H(\Lambda))(x,y)$ as $x\to y$.)   As a first step in  checking this continuity, we need to know something about $\Lambda$, and the function space on which the convergence of the numerator in \eqref{sum:lambdas} to $\Lambda$ occurs.

We start with $\Lambda$. Since, by assumption, it is mean zero  Gaussian, all of its properties are determined by  its covariance function. Given the expressions \eqref{eq:fxyrep-end} and \eqref{varfxy}, it is not hard to check that this is given by
\beq
&&\E\left\{\Lambda_\ell(x_1,y_1)\Lambda_\ell(x_2,y_2)\right\}  \nonumber \\
&&\quad =\E\left\{\Lambda(x_1,y_1)\Lambda(x_2,y_2)\right\}  \nonumber \\
&&\quad =\  \C^{y_1y_2}-\C^{x_2y_1}\C^{x_2y_2}  
-\ \sum_i \frac{\partial \C^{x_2y_1}}{\partial x^i} \frac{\partial \C^{x_2y_2}}{\partial x^i}+\C^{x_1x_2}\C^{x_1y_1}\C^{x_2y_2}\nonumber\\
&&\qquad +\ \C^{x_1y_1}\sum_i  \frac{\partial \C^{x_2x_1}}{\partial x^i} \frac{\partial \C^{x_2y_2}}{\partial x^i}-\sum_i  \frac{\partial \C^{x_1y_2}}{\partial x^i}\frac{\partial \C^{x_1y_1}}{\partial x^i}\nonumber\\
&&\qquad +\ \C^{x_2y_2}\sum_i\frac{\partial \C^{x_1x_2}}{\partial x^i}\frac{\partial \C^{x_1y_1}}{\partial x^i}+\sum_{i,j}\frac{\partial \C^{x_1y_1}}{\partial x^i}\frac{\partial \C^{x_2y_2}}{\partial x^i}\frac{\partial^2 \C^{x_1x_2}}{\partial x^i \partial x^j}.
\label{eq:corrL}
\eeq
(Note that setting $x=x_1=x_2$ and $y=y_1=y_2$ here is what gives the numerator in the expression for $V^{xy}$ in \eqref{varfxy-early}.)

We can now consider the behaviour of 
\beq
\lim_{y\rightarrow x}\frac{\Lambda(x,y)}{(1-\C^{xy})^2}.
\label{eq:limana}
\eeq

To see how this works, we restrict the argument to the case in which $M$ is one-dimensional. While notationally much simpler than the general case (although we shall see in a moment that it is hardly `simple') it is indicative of the general situation. In the general case the limit in \eqref{eq:limana} will be taken along a specific path of $y$'s, for which the final direction of approach to  $x$ will be what plays the role of the single dimension  in the following calculation.

Taking then $x,y\in M\subset \real^1$, it is an immediate consequence of \eqref{eq:corrL} that the variance of $\Lambda(x,y)$ tends to zero as $y\to x$, and thus so does $\Lambda(x,y)$ itself. The denominator here clearly also converges to zero, and so to compute the ratio we need to resort to an application of L'H\^opital's rule, which gives us that the limit in (\ref{eq:limana}) is the same as 
\beq
\lim_{y\to x} \frac{\frac{\partial \Lambda(x,x)}{\partial x}}{-2(1-\C^{xx})\frac{\partial \C^{xx}}{\partial x}}.
\eeq
Once again, it is obvious that the denominator  vanishes in the limit.

As for the numerator,  it follows from \eqref{eq:corrL}  and the fact that $g$-norm of $\frac{\partial}{\partial x}$ is one that 
\beqq
&&\E\left\{\left(\frac{\partial L(x,y)}{\partial x}\right)^2\right\}\bigg|_{y=x}\\
&&\qquad =\frac{\partial}{\partial y_1}\frac{\partial}{\partial y_2}\E\{L(x,y_1)L(x,y_2)\}|_{y_1=y_2=x}\\
&&\qquad = 4\left(\C^{y_1y_2}-\C^{xy_1}\C^{xy_2}-\frac{\partial \C^{xy_1}}{\partial x}\frac{\partial \C^{xy_2}}{\partial x}\right)\\
&& \qquad\qquad\qquad\qquad  \times\left(\frac{\partial^2 \C^{y_1y_2}}{\partial y_1 \partial y_2}-\frac{\partial \C^{xy_1}}{\partial y_1}\frac{\partial \C^{xy_2}}{\partial y_2}-\frac{\partial^2 \C^{xy_1}}{\partial y_1\partial x}\frac{\partial^2 \C^{xy_2}}{\partial y_2\partial x}\right)\\
&&\qquad\qquad +\ 4\left(\frac{\partial \C^{y_1y_2}}{\partial y_1}-\frac{\partial \C^{xy_1}}{\partial y_1}\C^{xy_2}-\frac{\partial^2 \C^{xy_1}}{\partial y_1\partial x}\frac{\partial \C^{xy_2}}{\partial x}\right)\\
&& \qquad\qquad\qquad\qquad  \times\left(\frac{\partial \C^{y_1y_2}}{\partial y_2}-\frac{\partial \C^{xy_2}}{\partial y_2}\C^{xy_1}-\frac{\partial^2 \C^{xy_2}}{\partial y_2\partial x}\frac{\partial \C^{xy_1}}{\partial x}\right).
\eeqq
However, evaluated at $y_1=y_2=x$, this also vanishes, so yet another application of 
 L'H\^opital's rule is required.
 
 In fact, two more applications of L'H\^opital's rule are required, and while the derivation  follows the line of the previous applications, the formulae are rather long, and so we will skip the details. However, in the end, one finds that  
\beq
\lim_{y\rightarrow x}\frac{\Lambda(x,y)}{(1-\C^{xy})^2}\ = \
 \frac{\frac{\partial^4 L(x,x)}{\partial x^4}}{6\left(\frac{\partial^2 \C^{xx}}{\partial x^2}\right)^2},
\label{eq:ratio:r}
\eeq
where the variance of the numerator is \beq
72\left(\frac{\partial^4 \C^{xx}}{\partial x^4}-\left(\frac{\partial^3 \C^{xx}}{\partial x^3}\right)^2-1\right)^2,
\eeq
which, like the denominator of \eqref{eq:ratio:r}  is non-zero. (This is a consequence of   the non-degeneracy assumed in
Assumption \ref{f:assump}.)

The punch line to all this is that in order to apply the continuous mapping theorem with the mapping $H$ of \eqref{eq:H}, we need to have convergence not only of the sum $k^{-1/2}  \sum\Lambda_\ell$, but also at least four of its derivatives. That is, we need weak convergence in the Banach  space $B^{(4)}$ of four times  continuously differentiable functions on $M\times M$, equipped with the norm 
\beqq
\qquad \ 
\|f\|_{B^{(4)}}  \definedas   \max\left\{ \|f\|_{\infty},\, \|\nabla f\|_{\infty},\, \|\nabla^2 f\|_{\infty}, \|\nabla^3 f\|_{\infty},\, \|\nabla^4 f\|_{\infty}\right\},
\eeqq
(cf.\ \eqref{Bnorm:defn}).  

Now that we know what to do, the rest is, at least in principle, straightforward, and 
 the proof follows along the same lines of the proof of the weak convergence of $\alpha_k$ we treated in Section \ref{Sec:alpha}. Convergence of finite dimensional distributions follows from Theorem \ref{Anderson:thm}, while tightness requires the computation of moments of increments of the $\Lambda_\ell$ and their first four derivatives. Note, however,  that $\Lambda_\ell(x,y)$ involves  $f_\ell^x(y)$. Since we have seen that $f_\ell^x(\cdot)$, as a function on $M$,   basically possesses one less level of differentiability that $f$ itself, requiring four derivatives for  $\Lambda_\ell$ ultimately  leads to requiring $f\in C^5(M)$. In addition, since  the arguments is Section  \ref{Sec:alpha} relied on a Taylor expansion,  one further derivative is required, which is why the lemma, and so Theorem \ref{fluctheorem}, require $f\in C^6(M)$.
 
 We leave the details to the reader. While they are long and involved, the fact that all random variables are either Gaussian or squares of Gaussians means that there is no more involved than Wick's formula and accounting.
 \end{proof}

\subsection{Proof of  Theorem \ref{thm:mainweak}}
\label{sec:mainweak:pf}  

From \eqref{eq:locreach2}, \eqref{dlimit} and the definition \eqref{sigmax:def} of  $\sigma_c^2(f,x))$ we have that 
\beqq
&&\sqrt{k}\left|\cot^2\theta_k(x)-\sigma_c^2(f,x)\right|\\
&&\qquad\quad =\  \sqrt{k}\,\Big|\sup_{y\in M\setminus\{x\}}\left(R^{xy}_k  -E^{x,k}(y)\right) -\sup_{y\in M\setminus\{x\}}V^{xy}\Big|\\
&&\qquad \quad \leq\ \sqrt{k}\,\Big|\sup_{y\in M\setminus\{x\}}R^{xy}_k  -\sup_{y\in M\setminus\{x\}}V^{xy}\Big|
\ +\ \sqrt{k}\sup_{y\in M\setminus\{x\}}\ \big| E^{x,k}(y)\big|\\
&&\qquad\quad \leq \ \sup_{y\in M\setminus\{x\}}\left|\gamma_k(x,y)\right| \ +\ \sqrt{k}\sup_{y\in M\setminus\{x\}}\ \big|E^{x,k}(y)\big|.
\eeqq
From the discussion preceding \eqref{needed:eq} we know that we can ignore the second term here in the limit. The 
representation of $\gamma_k$ in \eqref{breakdown} in terms of the processes $\Sigma_k^{(1)}$, $\Sigma_k^{(2)}$ $B_k$, $N_k$ and $Z_k$, the joint weak convergence of all of these in Lemma \ref{fidi-lemma-12}, and an application of the continuous mapping theorem, complete the proof of Theorem \ref{thm:mainweak}.
\qed


\section*{Appendix 1}
We now give a  proof of Lemma \ref{crm}. As mentioned earlier,
Lemma \ref{crm} is identical to Lemma 2.1  of \cite{Takemura-equivalence} and, as pointed out there, the proof is essentially the same as the proof given in \cite{Johansen} for the one-dimensional case. Thus, we make no claim of originality, and include the proof for completeness only. 

\begin{proof}
Take $\eta_x\in T^{\perp}_xM\cap S(T_x S^{k-1})$, and consider  the geodesic 
\beqq
\gamma_{(x,\eta_x)}(r)=\cos r  \,x+\sin r \, \eta_x,\qquad r\geq 0.
\eeqq
To determine the local reach in the direction $\eta_x$, we need to know how far we can extend $\gamma$ so that the metric projection of the endpoint is $x$. Clearly, this is until we find a $y\neq x$ such that 
\beqq
\langle y,\gamma_{(x,\eta_x)}(r)\rangle\ =\ \langle x,\gamma_{(x,\eta_x)}(r)\rangle.
\eeqq
Consider first the case of $r<\frac{\pi}{2}$ so that $\cos r>0$.  Then the above two formulae imply that we can  extend the geodesic  at least until such an $r$ as long as 
\beqq
&&\sup_{y\neq x}\,(\cos r  \,(\langle y,x\rangle-1)+\sin r  \langle y,\eta_x\rangle)\leq 0 \\
&&\qquad \iff \sup_{y\neq x} \,(-\cos r  \,(1-\langle y,x\rangle -1)+\sin r  \, \langle y,\eta_x\rangle)\leq 0 \\
&&\qquad \iff \sup_{y\neq x}\left(-\cot r +\frac{\langle y,\eta_x\rangle}{1-\langle x,y\rangle}\right)\leq 0\\
&&\qquad \iff\sup_{y\neq x}\frac{\langle y,\eta_x \rangle}{1-\langle x,y\rangle}\leq \cot r.
\eeqq
When $r\geq \frac{\pi}{2}$, the same argument  gives that as soon as there is a $y$ such that $\langle y,\eta_x\rangle >0$, 
\beqq
\cos r\,(1-\langle y,x\rangle)+\sin r\,\langle y,\eta_x\rangle\ >\ 0,
\eeqq
and therefore, for such $r$,
\beqq
\sup_{y\neq x}\,(\cos r  \,(\langle y,x\rangle-1)+\sin r  \langle y,\eta_x\rangle)\ > \ 0,
\eeqq
implying that such a $y$ is closer to $\gamma_{x,\eta_x}(r)$ than $x$ is. Hence, the geodesic can only be extended up to a length less than or equal to $\frac{\pi}{2}$.\\
\indent Thus, by our earlier argument for $r\leq \frac{\pi}{2}$, we note that on the set
$$Z=\big\{(x,\eta_x)\: \sup_{y\neq x}\langle y,\eta_x\rangle>0\big\},$$
the critical radius satisfies
$$\cot r(x,\eta_x)\ =\ \sup_{y\neq x}\frac{\langle y,\eta_x \rangle}{1-\langle x,y\rangle}\ =\ \sup_{y\neq x}\frac{\langle y,\eta_x \rangle^+}{1-\langle x,y\rangle}\ \geq \ 0,$$
where $x^{+}$ denotes the positive part of $x$. Meanwhile, on the set $Z^{\text{c}}$, we have $$\cot r(x,\eta_x)\leq 0.$$ Therefore, we have the inequality,
$$\cot r(x,\eta_x)\ \leq \ \sup_{y\neq x}\frac{\langle y,\eta_x \rangle^+}{1-\langle x,y\rangle},$$
which becomes an equality when
$$\sup_{y\neq x}\frac{\langle y,\eta_x \rangle^+}{1-\langle x,y\rangle}\ >\ 0.$$
In other words, we have
$$
\cot\left( \min\left(r(x,\eta_x),\frac{\pi}{2}\right)\right)\ =\  \sup_{y\neq x}\frac{\langle y,\eta_x \rangle^+}{1-\langle x,y\rangle}.
$$
Finally, since $M$ is a closed manifold embedded into a sphere, the local reach at $x$, which is an infimum over all $\eta_x$ above, cannot be greater than $\frac{\pi}{2}$. Thus we can truncate at this angle, and so,  
by (\ref{locx}), \eqref{globc}, and the above, obtain that 
\beqq
\cot^2(\theta(x))\ =\ \cot^2\left(\inf_{\eta_x}\theta_\ell(x,\eta_x)\right) &=& \sup_{\eta_x:\|\eta_x\|=1}\sup_{y\neq x}\left(\frac{\langle y,\eta_x \rangle^+}{1-\langle x,y\rangle}\right)^2 \\
&=&\sup_{y\neq x}\frac{\|P_{T^{\perp}_x M}y\|^2}{(1-\langle x,y\rangle)^2},
\eeqq
as required.
\end{proof}

\section*{Appendix 2}
We need to show that the remainder terms, $O(1/k^2)$, in  (\ref{bias:eq}) and (\ref{variance:eq}) are of the right order and, just as importantly, are uniform over $M\times M$.

As mentioned earlier, if the correlation estimates $\widehat \C (x,y)$ of $\C(x,y)$ were centered at sample means rather than zero -- which  we shall refer to as the `standard' case --  we could simply quote known results from the Statistics literature to establish everything we need. These results are not hard to prove, but they involve pages of tedious algebra, which we do not want to try to reproduce here. Rather, we shall suffice with describing the standard proofs, and where changes need to be made to cover our situation.

The standard  case is treated in \cite{Kendall}.  Following the derivation in Chapter 16, Section 16.24 there, we start by writing out the joint probability of $k$ sample values $\{(f_j(x),f_j(y))\}_{j=1}^k$ drawn from a bivariate normal density with zero means and unit variances in terms of the statistics we are interested in, namely, 
\beqq
s_1^2\ \definedas \ \frac{1}{k}\sum f_j^2(x), \qquad s_2^2\definedas \frac{1}{k} \sum f_j^2(y),
\eeqq 
along with the $\wC_k(x,y)$ of (\ref{MLest}). These replace the standard  sample mean centered version of these statistics in   \cite{Kendall}.

 Using the result of Example 11.6 in Chapter 11 of \cite{Kendall} which deals with finding the distribution of a sum of squares of i.i.d.\ standard normal variates, and following the discussion in Section 16.24 there, we find 
that the exact joint probability density of $s_1,s_2,$ and $\wC_k(x,y)$, on $\real_+\times\real_+\times [-1,1]$, is given by 
\beqq
&&\frac{k^k s_1^{k-1}s_2^{k-1}(1-\wC_k^2(x,y))^{\frac{k-3}{2}}}{\pi\Gamma(k-1)(1-\C^2(x,y))^{k/2}}\times e^{-\frac{k}{2(1-\C^2(x,y))}\left(s_1^2-2\C(x,y)\wC_k(x,y) s_1s_2+s_2^2\right)}.
\label{eq:density}
\eeqq
As in Section 16.32 of \cite{Kendall}, we now integrate out $s_1$ and $s_2$, and use the remaining density  of $\wC$ to compute that \beqq
\E\{\wC_k(x,y)\}
\ = \ \frac{\C(x,y)\,\Gamma^2((k+1)/2)}{\Gamma(k/2)\,\Gamma((k+2)/2)}\,F\left(\frac{1}{2},\frac{1}{2},\frac{k+2}{2},\C^2(x,y)\right),
\label{eq:meanC}
\eeqq
where $F$ is the hypergeometric function. 
Note that $0\leq\C^2(x,y)\leq 1$ for all $(x,y)\in M\times M$. The fact that $$F(\alpha,\beta,\gamma,x)=1+xO(1/\gamma)\,\,\text{as}\,\,\gamma\rightarrow\infty$$ uniformly for $x$ in any bounded set (cf. \cite{Olkin}), and a Stirling's approximation which gives that the ratio of Gamma functions in (\ref{eq:meanC}) converges to $1$ as $k\rightarrow\infty$, gives the uniformity of the error bound in (\ref{bias:eq}). A similar calculation establishes (\ref{variance:eq}) and the uniformity of the error bound there.

\section*{Acknowledgements}
We would like to thank Takashi Owada for useful discussions.

\bibliographystyle{plain}

\bibliography{tau-bib}

\end{document}